\newtheorem{theorem}{Theorem}
\newtheorem{lemma}{Lemma}
\newtheorem{corollary}{Corollary}
\newtheorem{Prop}{Proposition}
\newtheorem{definition}{Definition}
\newtheorem{remark}{Remark}
\theoremstyle{remark}
\newtheorem{example}{Example}
 \def\bdefinition{\begin{definition}\sl{}\def\edefinition{\end{definition}}}
 \def\beqlb{\begin{eqnarray}}\def\eeqlb{\end{eqnarray}}
 \def\beqnn{\begin{eqnarray*}}\def\eeqnn{\end{eqnarray*}}
\def\P{\mathbb{P}}
\newcommand{\ddr}{\mathrm{d}}
\begin{document}
\begin{frontmatter}
\title{Continuous-state branching processes, extremal processes and super-individuals}
\runtitle{CSBPs, extremal processes and super-individuals}
\begin{aug}
\author{\fnms{Cl\'ement} \snm{Foucart}\thanksref{a},\ead[label=e1]{foucart@math.univ-paris13.fr}}
\author{\fnms{Chunhua} \snm{Ma}\thanksref{b}\ead[label=e2]{mach@nankai.educ.cn}}

\address[a]{Laboratoire Analyse, G\'eom\'etrie \& Applications, UMR 7539
Institut Galil\'ee Universit\'e Paris 13,
99 avenue J.B. Cl\'ement 93430 Villetaneuse
FRANCE\\
\printead{e1}}
\address[b]{
School of Mathematical Sciences and LPMC, Nankai University, Tianjin 300071 P. R. CHINA
\\
\printead{e2}}

\runauthor{C. Foucart, C. Ma}

\affiliation{Universit\'e Paris 13 and Nankai University}

\end{aug}

\begin{abstract}
The long-term behavior of flows of continuous-state branching processes are characterized through subordinators and extremal processes. The extremal processes arise in the case of supercritical processes with infinite mean and of subcritical processes with infinite variation. The jumps of these extremal processes are interpreted as specific initial individuals whose progenies overwhelm the population. These individuals, which correspond to the records of a certain Poisson point process embedded in the flow, are called super-individuals. They radically increase the growth rate to $+\infty$ in the supercritical case, and slow down the rate of extinction in the subcritical one.
\end{abstract}
\begin{abstract}[language=french]
Les comportements en temps long des flots de processus de branchement en temps et espace continus sont caract\'eris\'es par des subordinateurs et des processus extr\'emaux.  Les processus extr\'emaux apparaissent dans le cas des processus sur-critiques de moyenne infinie et des processus sous-critiques \`a variation infinie. Les sauts de ces processus extr\'emaux sont interpr\'et\'es comme des individus initiaux sp\'ecifiques dont les descendances envahissent la population. Ces individus, qui correspondent aux instants de records d'un certain processus ponctuel de Poisson, sont appel\'es super-individus. Ils augmentent de fa\c{c}on radicale la vitesse de divergence dans le cas sur-critique et diminuent celle d'extinction dans le cas sous-critique.
\end{abstract}
\begin{keyword}[class=MSC]
\kwd[Primary ]{60J80}
\kwd[; secondary ]{60G70} {60G55}
\end{keyword}
\begin{keyword}
\kwd{Continuous-state branching process} \kwd{Subordinator} \kwd{Extremal process} \kwd{Infinite mean} \kwd{Infinite variation} \kwd{Super-exponential growth} \kwd{Grey martingale} \kwd{Non-linear renormalisation}
\end{keyword}
\end{frontmatter}

\section{Introduction and main result}\label{CBI}
We consider branching processes in continuous-time with continuous-state space (CSBPs) as defined by Ji{\v{r}}ina \cite{Jirina} and Lamperti \cite{Lamperti2},\cite{Lamperti1}. These processes are the continuous analogues of Galton-Watson Markov chains. A feature of the continuous-state space is that the population can become extinguished asymptotically while maintaining a positive size at any time. The process, in this case, is said to be \textit{persistent}. Grey in \cite{Grey} and Bingham \cite{BINGHAM1976217} have studied the long-term behavior of CSBPs. It is shown in \cite{Grey} that any CSBP with finite mean or finite variation can be linearly renormalized to converge almost-surely. Duquesne and Labb\'e in \cite{DuqLab} have generalized this result by showing that any \textit{flow} of CSBPs with finite mean or finite variation can be renormalized to converge towards a subordinator. This convergence corresponds to the natural idea that all individuals have progenies that grow or decline at the same scale. We show in this article that in the case of CSBPs with infinite mean or infinite variation, the flow can be renormalized (in a non-linear way) to converge towards the partial records of a Poisson point process. The limit process has therefore a very different nature than that in the case of finite mean or finite variation. An intuitive explanation is that the initial individuals have progenies that grow or decline at different rates. The limit process will take into account only the progeny of the individual whose growth rate is \textit{maximal}. An important example of persistent CSBP with infinite mean and infinite variation is the CSBP of Neveu, whose branching mechanism is $$\Psi(u)=u\log u=\int_{0}^{+\infty}(e^{-ux}-1+ux1_{\{x\leq 1\}})x^{-2}\ddr x.$$
Let $(X_t(x), t\geq 0)$ be a Neveu's CSBP with initial value $x$. A well-known result of Neveu \cite{Neveu}, states that for any fixed $x$ \begin{equation}\label{Neveu}
e^{-t}\log X_t(x)\underset{t\rightarrow +\infty}{\longrightarrow} Z(x) \text{ a.s.}
\end{equation}
where $Z(x)$ has a Gumbel law : for all $z\in \mathbb{R}$, $\mathbb{P}(Z(x)\leq z)=e^{-xe^{-z}}$. 
We show that for any CSBP starting from a fixed initial value $x$, which is non-explosive and has infinite mean or is persistent and has infinite variation, one can find a \textit{non-linear} renormalisation, in the same vein as (\ref{Neveu}), that converges towards a certain random variable $Z(x)$. This non-linear renormalisation reflects that the population grows or declines at a super-exponential rate. The problem of finding a renormalisation in the case of Galton-Watson processes with infinite mean has been considered by many authors, we refer to Grey \cite{Grey2}, Barbour and Schuh \cite{MR540789} and the references therein. We will adapt Grey's method to the continuous-state space setting. Following the seminal idea of Bertoin and Le Gall in \cite{LGB0}, a \textit{continuous population model} can be defined by considering a flow of subordinators $(X_t(x),t\geq 0 , x\geq 0)$. Formally, the progeny at time $t$ of the individual $x$ is given by $\Delta X_t(x)=X_t(x)-X_t(x-)$ the jump at $x$ of the subordinator $X_t$. The main purpose of this paper is to investigate the limit process $(Z(x), x\geq 0)$ and to interpret it in terms of the population model.  We will see that $(Z(x), x\geq 0)$ is an \textit{extremal process} whose law is explicit in terms of the branching mechanism. For instance, in the Neveu case, the process in (\ref{Neveu}) is an extremal-$\Lambda$ process with $\Lambda(x)=\exp\left(-e^{-x}\right)$.  In order to give some insights for the interpretation, we borrow some ideas of Bertoin, Fontbona and Martinez in \cite{MR2455180}, Labb\'e \cite{MR3224288} and Duquesne and Labb\'e \cite{DuqLab}.  In \cite{MR2455180}, the authors show that in a supercritical CSBP with infinite variation, some initial individuals have progenies that tend to $+\infty$. These individuals are called prolific and are responsible for the infinite growth of the process. 
\bdefinition\label{prolific} The individual $x$ is said to be prolific if $\underset{t\rightarrow+\infty}{\lim}\Delta X_t(x)=+\infty$. Denote by $\mathcal{P}$ the set of prolific individuals
$$\mathcal{P}:=\{x>0; \underset{t\rightarrow+\infty}{\lim}\Delta X_t(x)=+\infty\}.$$
\edefinition
We shall see that in a non-explosive CSBP with infinite mean (with or without infinite variation) and in a persistent CSBP with infinite variation, some individuals have a progeny that overwhelms the total progeny of all individuals \textit{below} them (see Definition \ref{defsuper}). These individuals will be called \textit{super-individuals}. \bdefinition \label{defsuper} The individual $x$ is said to be a super-individual if $\underset{t\rightarrow +\infty}\lim\frac{\Delta X_{t}(x)}{X_{t}(x-)}=+\infty$ a.s. Denote by $\mathcal{S}$ the set of super-individuals \beqlb\label{super-individual}\mathcal{S}:=\left\{x>0; \underset{t\rightarrow +\infty}\lim \frac{\Delta X_{t}(x)}{X_{t}(x-)}=+\infty\right\}.\eeqlb
\edefinition We stress that there is an order between the super-individuals: if $x_1$ and $x_2$ are in $\mathcal{S}$ and $x_1<x_2$ then the progeny of $x_2$ overwhelms that of $x_1$, since $0\leq \frac{\Delta X_t(x_1)}{\Delta X_t(x_2)}\leq \frac{X_t(x_2-)}{\Delta X_t(x_2)} \underset{t\rightarrow +\infty}{\longrightarrow} 0$. In the supercritical case, we say that an individual is \textit{super-prolific}, if it is a prolific super-individual. We will see that only certain prolific individuals are super-prolific. In the subcritical case, since all initial individuals have progenies that get extinct (in finite time or not), no prolific individual may exist. However, when the process is persistent with infinite variation, super-individuals do exist. They are individuals whose progeny decays at a much slower rate than all individuals below them. 
The super-individuals in the subcritical case and the super-prolific individuals in the supercritical case will correspond to the jumps times of the extremal process $(Z(x), x\geq 0)$.
In the finite variation case and finite mean case, $\mathcal{S}$ is degenerate (empty or reduced to a single point) and there are basically no super-individuals. A Poisson construction of the flow of subordinators $(X_t(x), t\geq 0)$ is given in \cite{DuqLab} for all branching mechanisms $\Psi$.  In the infinite mean or infinite variation case, this Poisson construction, recalled in Section \ref{genealogy}, allows us to determine a Poisson point process $\mathcal{M}$, as shown in Lemma \ref{thethreeinfinitevar} and Lemma \ref{thethreeinfinitevarsub}, whose partial records correspond to $(Z(x), x\geq 0)$. We state now our main result.
\begin{theorem}\label{main1}  Consider  $(X_t(x), t\geq 0, x\geq 0)$ a flow of CSBPs$(\Psi)$ as defined in (\ref{poissoninfinitevariation}) and (\ref{poissonfinitevariation}). 
\begin{itemize}
\item[i)] Assume $\Psi'(0+)=-\infty$ and $\int_{0}\frac{\ddr u}{\Psi(u)}=-\infty$. Call $\rho$ the largest root of $\Psi$, fix $\lambda_0\in (0,\rho)$ and define $G(y):=\exp\left(-\int^{\lambda_0}_{y}\frac{\ddr u}{\Psi(u)}\right)$ on $(0,\rho)$. Then, almost-surely for all $x\geq 0$
$$e^{-t}G\left(\frac{1}{X_{t}(x)}\wedge \rho\right) \underset{t\rightarrow +\infty}\longrightarrow Z(x)=\sup_{x_i\leq x}Z_i$$
where $\mathcal{M}:=\sum_{i\in I}\delta_{(x_i, Z_i)}$ is a Poisson point process over $\mathbb{R}_+\times (0,\infty)$ with intensity $\ddr x \otimes \mu(\ddr z)$ whose tail is $\bar{\mu}(z)=G^{-1}(z)$ (the inverse function of $G$). Moreover,
$$\mathcal{S}\cap \mathcal{P}=\{x>0; \Delta Z(x)>0\} \text{ a.s.}$$
\item[ii)] Assume $\Psi'(0+)\geq 0$, $\Psi(u)/u \underset{u\rightarrow +\infty}{\longrightarrow} +\infty$ and $\int^{+\infty}\frac{\ddr u}{\Psi(u)}=+\infty$. Fix $\lambda_0\in (0,+\infty)$ and define $G(y):=\exp\left(-\int_{\lambda_0}^{y}\frac{\ddr u}{\Psi(u)}\right)$ on $(0,+\infty)$. Then, almost-surely for all $x\geq 0$
$$e^{t}G\left(\frac{1}{X_{t}(x)}\right)\underset{t\rightarrow +\infty}\longrightarrow Z(x)=\sup_{x_i\leq x}Z_i$$
where $\mathcal{M}:=\sum_{i\in I}\delta_{(x_i, Z_i)}$ is a Poisson point process over $\mathbb{R}_+\times (0,\infty)$ with intensity $\ddr x \otimes \mu(\ddr z)$ whose tail is $\bar{\mu}(z)=G^{-1}(z)$. Moreover, 
$$\mathcal{S}=\{x>0; \Delta Z(x)>0\} \text{ a.s.}$$
\end{itemize} 
\end{theorem}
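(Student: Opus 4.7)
The strategy combines a Grey-type martingale analysis at fixed $x$ with the Poisson construction of the flow provided by Lemmas~\ref{thethreeinfinitevar} and~\ref{thethreeinfinitevarsub}. I treat case (ii) in detail; case (i) is symmetric after truncating at $\rho$ and flipping the sign in the renormalising exponential. The analytic input is the identity $G(v_t(\lambda)) = e^t G(\lambda)$, i.e.\ $v_t(\lambda) = G^{-1}(e^t G(\lambda))$, which follows from $G'(y) = -G(y)/\Psi(y)$ and the cumulant equation $\partial_t v_t(\lambda) = -\Psi(v_t(\lambda))$. Using $\mathbb{E}_x[e^{-\lambda X_t(x)}] = e^{-x v_t(\lambda)}$, a direct computation then shows that for every $\mu > 0$ the process
\[
M_t^{(\mu)}(x) := \exp\bigl(-G^{-1}(e^{-t}\mu)\,X_t(x)\bigr)
\]
is a bounded $[0,1]$-valued martingale, hence by the martingale convergence theorem converges a.s.\ and in $L^1$ to $M_\infty^{(\mu)}(x)$ with $\mathbb{E}[M_\infty^{(\mu)}(x)] = \exp(-x G^{-1}(\mu))$. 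A second-moment computation combined with the regular-variation-type estimate $G(2y)/G(y) \to 1$---which follows from $\Psi(u)/u \to +\infty$ (case (ii)) or $\Psi'(0+) = -\infty$ (case (i)) via $|\log(G(2y)/G(y))| = |\int_y^{2y} \ddr u/\Psi(u)| \to 0$---forces $\mathbb{E}[(M_\infty^{(\mu)})^2] = \mathbb{E}[M_\infty^{(\mu)}]$, whence $M_\infty^{(\mu)}(x) \in \{0,1\}$ a.s.

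Define $Z(x) := \sup\{\mu \in \mathbb{Q}_+ : M_\infty^{(\mu)}(x) = 0\}$; monotonicity of $M_t^{(\mu)}$ in $\mu$ gives $\mathbb{P}(Z(x) \leq z) = \exp(-x G^{-1}(z))$. The equivalence $\{W_t(x) > \mu\} = \{G^{-1}(e^{-t}\mu)\, X_t(x) > 1\}$, with $W_t(x)$ the renormalised functional in the theorem, combines with the $\{0,1\}$-limit of $M_t^{(\mu)}$ to yield $\limsup_t W_t(x) \leq \mu$ whenever rational $\mu > Z(x)$ and $\liminf_t W_t(x) \geq \mu$ whenever rational $\mu < Z(x)$; taking $\mu \to Z(x)$ along rationals gives $W_t(x) \to Z(x)$ a.s. For the flow, the branching decomposition $X_t(x_2) = X_t(x_1) + \widetilde{X}_t$, with $\widetilde{X}_t$ an independent CSBP$(\Psi)$ started at $x_2 - x_1$, together with the elementary sandwich $\max(X_t(x_1), \widetilde{X}_t) \leq X_t(x_2) \leq 2\max(X_t(x_1), \widetilde{X}_t)$ and $G(2y)/G(y) \to 1$, yields $W_t(x_2) = W_t(x_1) \vee \widetilde{W}_t + o(1)$ and hence $Z(x_2) = Z(x_1) \vee \widetilde{Z}$ with $\widetilde{Z}$ independent. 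This max-independent-increment structure with marginals $\exp(-x G^{-1}(z))$ identifies $(Z(x), x \geq 0)$ as the extremal process of a Poisson point process on $\mathbb{R}_+ \times (0, \infty)$ with intensity $\ddr x \otimes \mu(\ddr z)$ of tail $\bar\mu(z) = G^{-1}(z)$. Lemmas~\ref{thethreeinfinitevar} and~\ref{thethreeinfinitevarsub} supply the explicit realisation of this PPP as $\mathcal{M}$ from the flow itself.

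For the characterisation of super-individuals, the inclusion $\{x : \Delta Z(x) > 0\} \subset \mathcal{S}$ follows from the sandwich above: a positive jump of $Z$ at $x$ forces $X_t(x-) = o(X_t(x))$, so $\Delta X_t(x)/X_t(x-) \to +\infty$ and $x \in \mathcal{S}$; in case (i) the same argument also yields $\Delta X_t(x) \to +\infty$, placing $x$ in $\mathcal{S} \cap \mathcal{P}$. The reverse inclusion---every super-individual is a jump point of $Z$---is the main obstacle: a pathwise statement on the ratio $\Delta X_t(x)/X_t(x-)$ does not by itself force a jump of $Z$ at the correct scale $e^{\pm t}$, since the renormalisation may absorb the excess. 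This is precisely where Lemmas~\ref{thethreeinfinitevar} and~\ref{thethreeinfinitevarsub} become indispensable: the Poisson construction of the flow decouples the progeny of $x$ from that of $[0, x-)$ and identifies super-individuals directly with the atoms of $\mathcal{M}$ whose marks exceed the running supremum, yielding the reverse inclusion by a joint analysis of the two Poisson structures.
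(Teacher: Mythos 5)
The first half of your argument is sound and is essentially the paper's own route to the fixed-$x$ and finite-dimensional statements (Theorems \ref{main11} and \ref{main2}): your $M_t^{(\mu)}$ is exactly the Grey martingale of Lemma \ref{martingale} after the change of variable $\lambda=G^{-1}(\mu)$, your second-moment argument for $M_\infty^{(\mu)}\in\{0,1\}$ is the $\theta=2$ instance of the paper's Laplace-transform computation in Lemma \ref{WW}, and your sandwich $\max\leq X_t(x_2)\leq 2\max$ together with $G(2y)/G(y)\to 1$ is the paper's remark verifying (\ref{maxid}). The gaps are in the second half, which is where the actual content of Theorem \ref{main1} lies. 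The theorem is a pathwise statement: on a single event of full probability, $Z(x)=\sup_{x_i\leq x}Z_i$ \emph{for all} $x$ simultaneously, with $Z_i$ the marks attached to the Poissonian atoms of the flow. Lemma \ref{thethreeinfinitevar}, which you invoke for case (i), only yields the limits $Z_i$ and the intensity of $\mathcal{M}$; upgrading ``$\tilde Z(y)$ and $\sup_{x_i\leq y}Z_i$ have the same law'' to ``they are a.s. equal for all $y$'' is the content of Lemma \ref{Zsup} (a.s. domination plus equality in law gives a.s. equality at fixed $y$, then rationals, monotonicity and the c\`adl\`ag property of extremal processes), which you neither prove nor cite; you also do not address the finite-variation flows (\ref{poissonfinitevariation}) allowed in case (i), which require Lemma \ref{thethreefinitevar}. (In case (ii) the gathered Lemma \ref{thethreeinfinitevarsub} does contain these statements, so your citation there is at least formally adequate.)

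More seriously, the characterisation of super-individuals is not proved. Even your forward inclusion $\{\Delta Z(x)>0\}\subset\mathcal{S}$ silently uses that $e^{-t}G\bigl(\frac{1}{X_t(x-)}\wedge\rho\bigr)\to Z(x-)$, i.e. the left-limit convergence, which in case (i) is Lemmas \ref{left limit convergence 1} and \ref{left limit convergence 2} and requires a genuine coupling of the post-$s_l$ flow with an independent copy -- nothing in the lemmas you cite delivers it -- and even granted that, deducing $X_t(x-)=o(\Delta X_t(x))$ from $Z(x-)<Z(x)$ needs the locally uniform slow-variation argument of Lemma \ref{superprolific}, not merely $G(2y)/G(y)\to1$ at the single ratio $2$. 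For the reverse inclusion you only assert that the Poisson construction ``identifies super-individuals directly with the atoms whose marks exceed the running supremum''; that is precisely the statement to be proved, and it is not contained in Lemmas \ref{thethreeinfinitevar} or \ref{thethreeinfinitevarsub}. The paper proves it (Lemmas \ref{superprolific} and \ref{superresistant}) by showing that $Z_j>Z_m$ forces $X^j_t/X^m_t\to+\infty$ -- a subsequence contradiction using slow variation of $G$ -- so that for a non-record atom $x_m$ one has $X_t(x_m-)/\Delta X_t(x_m)\geq X^j_t/X^m_t\to+\infty$, hence $x_m\notin\mathcal{S}$. As written, your proposal leaves this step, the ``moreover'' part of the theorem, unproven.
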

The Poisson point process $\mathcal{M}$ represents the initial individuals with their asymptotic growth rates. In the supercritical case, we will see that non-prolific individuals have growth rates $Z_i$ equal to $0$. In the subcritical case, we shall see that there are infinitely many super-individuals near zero. We highlight that the function $G^{-1}$ can be written explicitly in terms of the cumulant of the CSBP$(\Psi)$.

In \cite{DuqLab}, the authors establish a classification of branching mechanims for which the population concentrates on the progeny of a single individual. This phenomenon is called \textit{Eve property}. More precisely, the population started from a \textit{fixed} size $x$ has the Eve property if there exists a random variable $\textsf{e}\in [0,x]$, such that
\beqlb\label{the eve}\frac{\Delta X_t(\textsf{e})}{X_{t}(x)}\underset{t\rightarrow \zeta_x}{\longrightarrow} 1 \text{ a.s}\eeqlb
where $\zeta_x:=\inf\{t\geq 0; X_t(x)\in \{0,+\infty\}\} \in \mathbb{R}_+\cup \{+\infty\}$. The individual $\textsf{e}$ is called the Eve. We refer to Definition 1.1 in \cite{MR3224288} and Definition 0.1 in \cite{DuqLab} for equivalent definitions. Unsurprisingly, the Eve property holds precisely when extremal processes arise (i.e.\;in the infinite variation or infinite mean case). From (\ref{super-individual}) and (\ref{the eve}), we see that the Eve $\textsf{e}$ must be a super-individual  whose progeny overwhelms those of individuals in $(\textsf{e},x]$. In our setting, the Eve of the population started with size $x$ is therefore characterized as the \textit{last} super-individual in $[0,x]$. In other words, the Eve is the individual whose growth is the fastest in the  non-explosive supercritical case with infinite mean or whose decay is the slowest in the persistent subcritical one with infinite variation. This corresponds to the record of the Poisson point process $\mathcal{M}$ in $[0,x]$. In particular, this \textit{forward-in-time} argument  allows us to follow the Eve of the population started from $x$ when $x$ evolves in the half-line. Labb\'e, in \cite{MR3224288}, has shown that when the population has an Eve, one can define a recursive sequence of Eves on which the population concentrates. We stress that  super-individuals and successive Eves are not the same individuals. Indeed, the successive Eves are i.i.d uniform in $[0,x]$ (see Proposition 4.13 in \cite{MR3224288}), whereas the super-individuals are ordered  and therefore not independent.

We wish to mention that Bertoin et al. in \cite{MR2455180} have shown that the number of prolific individuals, when time evolves, is an immortal branching process. This discrete process is related to the backbone decomposition which has been deeply studied by Berestycki et al \cite{Berestycki20111315}, Kyprianou et al \cite{MR3330813},\cite{MR2863035} and by Duquesne and Winkel \cite{Duquesne2007} in the framework of random trees. We will not address here the study of the number of super-individuals in time. Moreover, no spatial motion is taken into account in this work. We refer the reader to Fleischmann and Sturm \cite{MR2086012} and Fleischmann and Wachtel \cite{MR2238611} where superprocesses with Neveu's branching mechanism are studied. Lastly, the CSBP of Neveu and its limit (\ref{Neveu}) have been used in the study of Derrida's random energy model by Neveu \cite{Neveu}, Bovier and Kourkova \cite{BovierKurkova} and Huillet \cite{Huillet}. 

The paper is organized as follows. In Section \ref{preliminaries}, we recall the definition of a continuous-state branching process and some of its important properties. We describe the Poisson construction of the continuous population model (as in \cite{DuqLab}). Then we gather some results about extremal processes. In Section \ref{supercritical}, we start by a brief recall of the convergence of supercritical CSBPs with finite mean towards subordinators, established in \cite{DuqLab}. We deduce that in this case except the first prolific individual, there is no super-individual. Then, we focus on CSBPs with infinite mean and we establish the finite-dimensional convergence towards an extremal process (Theorem \ref{main11}). Afterwards we prove Theorem \ref{main1}-i) through three Lemmas: Lemma \ref{thethreeinfinitevar}, Lemma \ref{Zsup} and Lemma \ref{superprolific}. In Section \ref{subcritical}, we study the subcritical processes. The organisation is similar. Theorem \ref{main2} yields the convergence towards an extremal process characterized by its finite-dimensional marginal laws. Lemma \ref{lemmasub}, Lemma \ref{thethreeinfinitevarsub} and Lemma \ref{superresistant} are obtained similarly as in the supercritical case. By combining them with Theorem \ref{main2}, we obtain Theorem \ref{main1}-ii). In Subsection \ref{subsectionNeveu}, we treat the special case of the Neveu's CSBP. 
\section{Preliminaries}\label{preliminaries}
\textbf{Notation}. If $x$ and $y$ are two real numbers. We denote their maximum by $x\vee y$, and their minimum by $x\wedge y$. If $X$ and $Y$ are two random variables, $X\overset{d}{=}Y$ means that $X$ and $Y$ have the same law.  
\subsection{Continuous-state branching processes} 
Our main references are Chapter 12 of Kyprianou's book \cite{Kyprianoubook} and Chapter 3 of Li's book \cite{Li}. A positive Markov process $(X_t(x),t\geq 0)$ with $X_0(x)=x\geq 0$ is a continuous-state branching process in continuous time (CSBP for short) if for any $y\in \mathbb{R}_{+}$
\begin{equation}\label{branching}
(X_t(x+y), t\geq 0)\stackrel{d}{=}(X_t(x), t\geq 0)+(\tilde X_t(y), t\geq 0)
\end{equation}where $(\tilde X_t(y), t\geq 0)$ is an independent copy of $(X_t(y), t\geq 0)$. 
The branching property (\ref{branching}) ensures the existence of a map $\lambda\mapsto v_{t}(\lambda)$, called cumulant, such that for all $\lambda\geq 0$ and all $t,s\geq 0$
\begin{equation}\label{cumulant}
\mathbb{E}[e^{-\lambda X_{t}(x)}]=\exp(-xv_{t}(\lambda))  \text{ and } v_{s+t}(\lambda)=v_{s}\circ v_t(\lambda).
\end{equation}
Moreover, there exists a unique function $\Psi$ of the form
\begin{equation} \label{Levykhintchine} \Psi(q)=\frac{\sigma^{2}}{2}q^{2}+\gamma q+\int_{0}^{+\infty}\left(e^{-qx}-1+qx1_{\{x\leq 1\}}\right)\pi(\ddr x)
\end{equation}
with $\gamma\in\mathbb{R}$, $\sigma \geq 0$, and $\pi$ a $\sigma$-finite measure carried on $\mathbb{R}_{+}$ satisfying
\begin{equation*}
\int_{0}^{+\infty} (1\wedge x^{2}) \pi(\ddr x)<+\infty
\end{equation*}
such that the map $t\mapsto v_{t}(\lambda)$ is the unique solution to the integral equation
\begin{equation}\label{cumulantintegral} \forall t\in [0,+\infty), \forall \lambda \in (0,+\infty)/\{\rho\}, \quad \int_{v_{t}(\lambda)}^{\lambda}\frac{\ddr z}{\Psi(z)}=t
\end{equation}
where $\rho=\inf\{z>0, \Psi(z)\geq 0\}\in [0,+\infty]$. The process is said to be supercritical if $\Psi'(0+)\in [-\infty,0)$ (in which case $\rho\in (0,+\infty]$), subcritical if $\Psi'(0+)\in(0, +\infty)$ and critical if $\Psi'(0+)=0$ (in these last two cases $\rho=0$). 
\begin{theorem}[Grey, \cite{Grey}]\label{Grey} 
Consider $(X_{t}(x), t\geq 0)$ a CSBP$(\Psi)$ started from $x$.
\begin{itemize}
\item[i)] For any $x\geq 0$, $$\mathbb{P}(\underset{t\rightarrow +\infty}\lim X_{t}(x)=0)=1-\mathbb{P}(\underset{t\rightarrow +\infty}\lim X_{t}(x)=+\infty)=e^{-x\rho}$$
One has $\rho=+\infty$ if and only if $-\Psi$ is the Laplace exponent of a subordinator. In this case, the process is non-decreasing and tends to $+\infty$ almost-surely. 
\item[ii)] The process is almost-surely not absorbed at $0$ if and only if 
\begin{equation}
\label{NonAbsorption}
\int^{+\infty}\frac{\ddr u}{|\Psi(u)|}=+\infty \quad \text{(persistence).}
\end{equation}
If $\int^{+\infty}\frac{\ddr u}{\Psi(u)}<+\infty$, the limits $\bar{v}_{t}:=\underset{\lambda \rightarrow +\infty}{ \lim }v_{t}(\lambda)\in (0,+\infty)$  for any $t\geq 0$ and $\bar{v}:=\underset{t \rightarrow +\infty}{\lim \downarrow} \bar{v}_{t}$ exist. Moreover for all $t\geq 0$  $\frac{\ddr }{\ddr t}\bar{v}_{t}=-\Psi(\bar{v}_{t})$ with $\bar{v}_{0}=+\infty$ and  $$\mathbb{P}(X_t(x)=0)=\exp{(-x\bar{v}_t)}\text{ and }
\mathbb{P}(\exists t\geq 0: X_t(x)=0)=\exp{(-x\bar{v})}.$$
\item[iii)] The process is almost-surely not absorbed in $+\infty$ if and only if
\begin{equation}
\label{non-explosionHypothesis}
\int_{0}\frac{\ddr u}{|\Psi(u)|}=+\infty \quad \text{(non-explosion).}
\end{equation}
If $\int_{0}\frac{\ddr u}{|\Psi(u)|}<+\infty$, the limits  $\underline{v}_{t}:=\underset{\lambda \rightarrow 0}{ \lim }v_{t}(\lambda)\in (0,+\infty)$ for any $t\geq 0$ and $\underline{v}:=\underset{t \rightarrow +\infty}{\lim \uparrow} \underline{v}_{t}$ exist. Moreover for all $t\geq 0$, $\frac{\ddr }{\ddr t}\underline{v}_{t}=-\Psi(\underline{v}_{t})$ with $\underline{v}_{0}=0$ and
 $$\mathbb{P}(X_t(x)=+\infty)=1-\exp{(-x\underline{v}_t)}\text{ and } \mathbb{P}(\exists t\geq 0: X_t(x)=+\infty)=1-\exp{(-x\underline{v})}.$$
\end{itemize}
\end{theorem}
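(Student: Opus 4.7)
The proof relies entirely on the Laplace transform identity $\mathbb{E}[e^{-\lambda X_t(x)}]=e^{-xv_t(\lambda)}$ together with the integral equation (\ref{cumulantintegral}) for $v_t(\lambda)$. The plan is to send $\lambda\to+\infty$ and $\lambda\to 0^+$ to obtain parts ii) and iii), and then analyse the long-time behaviour of $v_t(\lambda)$ for fixed $\lambda>0$ to obtain part i).

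For parts ii) and iii), set $\bar v_t:=\lim_{\lambda\to+\infty}v_t(\lambda)$ and $\underline v_t:=\lim_{\lambda\to 0^+}v_t(\lambda)$, which exist in $[0,+\infty]$ by monotonicity of $\lambda\mapsto v_t(\lambda)$. Since $0$ is absorbing (the branching property (\ref{branching}) with zero initial mass) and $+\infty$ is absorbing for a right-continuous CSBP, monotone convergence applied to $\mathbb{E}[e^{-\lambda X_t(x)}]=e^{-xv_t(\lambda)}$ gives $\mathbb{P}(X_t(x)=0)=e^{-x\bar v_t}$ and $\mathbb{P}(X_t(x)=+\infty)=1-e^{-x\underline v_t}$. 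Passing to the same limits inside (\ref{cumulantintegral}) yields
\[
t=\int_{\bar v_t}^{+\infty}\frac{\ddr z}{\Psi(z)}\ (\bar v_t\geq \rho) \quad\text{and}\quad t=\int_0^{\underline v_t}\frac{\ddr z}{|\Psi(z)|}\ (\underline v_t\leq\rho).
\]
Consistency forces $\bar v_t=+\infty$ exactly when $\int^{+\infty}\ddr u/\Psi(u)=+\infty$ (persistence), and $\underline v_t=0$ exactly when $\int_0 \ddr u/|\Psi(u)|=+\infty$ (non-explosion); these are the dichotomies of ii) and iii). The ODEs $\frac{\ddr}{\ddr t}\bar v_t=-\Psi(\bar v_t)$ with $\bar v_0=+\infty$ and $\frac{\ddr}{\ddr t}\underline v_t=-\Psi(\underline v_t)$ with $\underline v_0=0$ follow by differentiating these integral identities. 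Finally, since $\{X_t(x)=0\}$ and $\{X_t(x)=+\infty\}$ are increasing in $t$ (absorption), letting $t\to+\infty$ in the one-time extinction and explosion probabilities produces the stated $e^{-x\bar v}$ and $1-e^{-x\underline v}$, with $\bar v$ and $\underline v$ the monotone limits of $\bar v_t$ and $\underline v_t$.

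For part i), the key observation is that $v_t(\lambda)\to\rho$ as $t\to+\infty$ for every $\lambda>0$. Indeed, $\Psi$ is convex with $\Psi(0)=0$, so $\Psi<0$ on $(0,\rho)$ and $\Psi>0$ on $(\rho,+\infty)$, and the only equilibria of $\dot v=-\Psi(v)$ in $[0,+\infty]$ are $0$ and $\rho$. For $\lambda\in(0,\rho)$, $v_t(\lambda)$ is increasing and bounded above by $\rho$; for $\lambda>\rho$ it is decreasing and bounded below by $\rho$; in either case it converges to $\rho$. Therefore $\mathbb{E}[e^{-\lambda X_t(x)}]\to e^{-x\rho}$ for every $\lambda>0$, identifying the distributional limit of $X_t(x)$ as $e^{-x\rho}\delta_0+(1-e^{-x\rho})\delta_{+\infty}$. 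To upgrade to a.s.\ convergence, I would use a suitable martingale: when $\rho\in(0,+\infty)$, the identity $v_t(\rho)=\rho$ makes $(e^{-\rho X_t(x)})_{t\geq 0}$ a bounded nonnegative martingale, hence a.s.\ convergent, and consequently so is $X_t(x)$; when $\rho=0$ (critical or subcritical), the nonnegative martingale $(e^{\Psi'(0+)t}X_t(x))_{t\geq 0}$ converges a.s., and, combined with $\mathbb{E}[X_t(x)]=xe^{-\Psi'(0+)t}$ and the distributional support on $\{0,+\infty\}$, forces $X_t(x)\to 0$ a.s. Finally, $\rho=+\infty$ is equivalent to $\Psi\leq 0$ throughout $\mathbb{R}_+$, which by the L\'evy--Khintchine form (\ref{Levykhintchine}) forces $\sigma=0$ and identifies $-\Psi$ as the Laplace exponent of a subordinator; the CSBP is then itself non-decreasing and diverges a.s.\ to $+\infty$.

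The main technical obstacle is this last a.s.\ upgrade in part i), particularly in the critical case $\Psi'(0+)=0$, where the auxiliary martingale reduces to $X_t(x)$ itself; there one must combine its nonnegative martingale limit with the distributional identification on $\{0,+\infty\}$ to conclude that the a.s.\ limit is $0$.
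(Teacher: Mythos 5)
The paper never proves this theorem: it is quoted from Grey \cite{Grey} (see also Chapter 12 of \cite{Kyprianoubook} and Chapter 3 of \cite{Li}), so there is no internal proof to compare against. Your argument is essentially the classical one and is correct in outline: parts ii) and iii) follow from the monotone limits $\lambda\to+\infty$ and $\lambda\to0^+$ in $\mathbb{E}[e^{-\lambda X_t(x)}]=e^{-xv_t(\lambda)}$ combined with the integral equation (\ref{cumulantintegral}), absorption of the boundary states, and monotonicity in $t$; part i) follows from the fixed-point analysis of the flow $\frac{\ddr}{\ddr t}v_t(\lambda)=-\Psi(v_t(\lambda))$, which gives $v_t(\lambda)\to\rho$ and hence the two-point limit law on $\{0,+\infty\}$, upgraded to almost-sure convergence by a martingale. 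A few points deserve slightly more care than you give them. First, when $\rho\in(0,+\infty)$ your bounded martingale $e^{-\rho X_t(x)}$ is exactly the Grey martingale of Lemma \ref{martingale} at $\lambda=\rho$; the identity $v_t(\rho)=\rho$ is not literally covered by (\ref{cumulantintegral}) (which excludes $\lambda=\rho$) and should be justified by continuity of $\lambda\mapsto v_t(\lambda)$ or by uniqueness for the ODE at the zero $\rho$ of $\Psi$. Second, in the case $\rho=+\infty$ the non-decreasingness of the sample paths does not follow formally from the identity of Laplace exponents alone; it comes from the Lamperti time-change representation of the CSBP as a time-changed subordinator, which you should invoke explicitly. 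Third, the interchange of the limits $\lambda\to+\infty$ (resp.\ $\lambda\to0^+$) with the integral in (\ref{cumulantintegral}), and the fact that a finite $\bar{v}_t$ must lie strictly above $\rho$, are routine but worth a sentence. With these details filled in, your proof is a faithful reconstruction of Grey's original argument, including the critical case, where the nonnegative martingale $X_t(x)$ has an a.s.\ finite limit that the distributional limit $\delta_0$ forces to be zero.
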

We classify now the mechanisms $\Psi$ according to their behaviour near $0$ and $+\infty$.  For any $\Psi$ as in (\ref{Levykhintchine}), $$\Psi'(0+)=
\underset{u\rightarrow 0}\lim\frac{\Psi(u)}{u}=\gamma-\int_{1}^{+\infty}x\pi(\ddr x)\in[-\infty,+\infty) \quad \text{(mean)}.
$$
One can show from (\ref{cumulant}) that $\mathbb{E}(X_{t}(x))=xe^{-\Psi'(0+)t}$  for all $t\geq 0$, this leads to the following classification.
\begin{itemize}
\item[-] If $\Psi'(0+)\in (-\infty,0)$, the process has a finite mean and $\int_{0}\frac{\ddr u}{\Psi(u)}=-\infty$. Therefore the process does not explode almost-surely and goes to $+\infty$ with probability $1-e^{-x\rho}$.
\item[-] If $\Psi'(0+)=-\infty$ and $\int_{0}\frac{\ddr u}{\Psi(u)}=-\infty$ then the process has an infinite mean, does not explode almost-surely and goes to $+\infty$ with probability $1-e^{-x\rho}$. The Neveu's CSBP provides an example of non-explosive CSBP with infinite mean.  
\item[-]If $\int_{0}\frac{\ddr u}{\Psi(u)}\in (-\infty,0)$, the process explodes continuously to $+\infty$ with probability $1-e^{-x\rho}$ if $\underline{v}<+\infty$, with probability $1$ if $\underline{v}=+\infty$. 
\end{itemize}
For any $\Psi$ as in (\ref{Levykhintchine}), $$\textbf{d}:=\underset{u\rightarrow +\infty}\lim\frac{\Psi(u)}{u}=+\infty 1_{\{\sigma>0\}}+\gamma+\int_{0}^{1}x\pi(\ddr x)\in (-\infty,+\infty] \quad \text{(variation).}$$
\begin{itemize}
\item[-] If $\textbf{d}\in \mathbb{R}$, then the process has finite variation sample paths (we will say that $\Psi$ is of finite variation) and $\int^{\infty}\frac{\ddr u}{\Psi(u)}=+\infty$. Therefore the process is persistent (not absorbed at $0$ almost-surely) and goes to $0$ with probability $e^{-x\rho}$ ($\rho=+\infty$ if $\textbf{d}\leq 0$).
\item[-] If $\textbf{d}=+\infty$ and $\int^{+\infty}\frac{\ddr u}{\Psi(u)}=+\infty$, then the process has infinite variation sample paths, is persistent and goes to $0$ with probability $e^{-x\rho}$.
\item[-] If $\textbf{d}=+\infty$ and $\int^{+\infty}\frac{\ddr u}{\Psi(u)}<+\infty$, then the process has infinite variation sample paths and is absorbed at $0$ with probability $e^{-x\rho}$.
\end{itemize}
Note that $\textbf{d}\in \mathbb{R}$ if and only if $\sigma=0$ and $\int_0^1 u\pi(\ddr u)<+\infty$. In this case (\ref{Levykhintchine}) can be rewritten as
\begin{equation}\label{finitevariationpsi} \Psi(u)=\textbf{d}u-\int_{0}^{+\infty}\pi(\ddr r)(1-e^{-ur}),
\end{equation} 
Unless explicitly specified, we shall always work under the non-explosion condition (\ref{non-explosionHypothesis}).
\begin{lemma} \label{v}
Denote by $\lambda\mapsto v_{-t}(\lambda)$ the inverse of $\lambda\mapsto v_t(\lambda)$. This is a strictly increasing function, well-defined on $[0,\bar{v}_{t})$. For all $s,t\in \mathbb{R}^+$, if $0\leq \lambda<\bar{v}_{s+t}$, then $$v_{-(s+t)}(\lambda)=v_{-s}\circ v_{-t}(\lambda).$$ Moreover by (\ref{cumulantintegral}), for all $t\geq 0$ and $\lambda<\bar{v}_{t}$
\begin{equation}\label{v-}
\int_{v_{-t}(\lambda)}^{\lambda}\frac{\ddr u}{\Psi(u)}=\int_{v_{-t}(\lambda)}^{v_{t}(v_{-t}(\lambda))}\frac{\ddr u}{\Psi(u)}=-t.
\end{equation}
In the supercritical case, $\Psi'(0+)<0$, $v_{-t}(\lambda)\underset{t\rightarrow +\infty}{\longrightarrow} 0$ and for any $\lambda',\lambda\in (0,\rho)$, we have 
\begin{equation}\label{vsup}\frac{v_{-t}(\lambda)}{v_{-t}(\lambda')}\underset{t\rightarrow +\infty}{\longrightarrow} \exp\left(\Psi'(0+)\int_{\lambda'}^{\lambda}\frac{\ddr u}{\Psi(u)}\right).
\end{equation}
In the subcritical case, $\Psi'(0+)\geq 0$, $v_{-t}(\lambda)\underset{t\rightarrow +\infty}{\longrightarrow} +\infty$. One has $\textbf{d}\in (0,+\infty]$ and for any $\lambda,\lambda'>0$, we have \begin{equation}\label{vsub}
\frac{v_{-t}(\lambda)}{v_{-t}(\lambda')}\underset{t\rightarrow +\infty}{\longrightarrow} \exp\left(\textbf{d}\int_{\lambda'}^{\lambda}\frac{\ddr u}{\Psi(u)}\right).
\end{equation}
\end{lemma}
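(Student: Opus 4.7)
The strategy is to derive the four assertions about $v_{-t}$ in succession, by inverting the integral characterization (\ref{cumulantintegral}) and exploiting the monotonicity of $u\mapsto\Psi(u)/u$.

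First, $\lambda\mapsto v_{t}(\lambda)$ is continuous and strictly increasing on $[0,+\infty)$: differentiating (\ref{cumulantintegral}) gives $v_t'(\lambda)=\Psi(v_t(\lambda))/\Psi(\lambda)$, and since $v_t(\lambda)$ and $\lambda$ lie on the same side of the root $\rho$, $\Psi$ keeps a constant sign between them, so $v_t'(\lambda)>0$. Its image is $[0,\bar v_t)$ by the very definition of $\bar v_t$ in Theorem \ref{Grey}. Hence $v_{-t}\colon[0,\bar v_t)\to[0,+\infty)$ is well defined and strictly increasing, and inverting $v_{s+t}=v_{s}\circ v_{t}$ yields $v_{-(s+t)}=v_{-s}\circ v_{-t}$ on $[0,\bar v_{s+t})$. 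The integral identity (\ref{v-}) is then obtained by applying (\ref{cumulantintegral}) at the parameter $\mu:=v_{-t}(\lambda)$, for which $v_{t}(\mu)=\lambda$, and rearranging the bounds.

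To identify the limit of $v_{-t}(\lambda)$, I use (\ref{v-}) again. In the supercritical case, $\Psi<0$ on $(0,\rho)$ and the non-explosion assumption (\ref{non-explosionHypothesis}) forces $\int_{0}\ddr u/\Psi(u)=-\infty$; as $t\to+\infty$, the identity $\int_{v_{-t}(\lambda)}^{\lambda}\ddr u/\Psi(u)=-t$ then requires $v_{-t}(\lambda)\to 0$. In the subcritical case, $\Psi>0$ on $(0,+\infty)$ and persistence gives $\int^{+\infty}\ddr u/\Psi(u)=+\infty$; the same identity, now with positive integrand, forces $v_{-t}(\lambda)\to+\infty$, and simultaneously $\textbf{d}>0$ since otherwise $\Psi$ would vanish and violate $\Psi'(0+)\geq 0$ together with persistence.

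For the ratio statements, set $a_t:=v_{-t}(\lambda')$ and $b_t:=v_{-t}(\lambda)$. Subtracting the two copies of (\ref{v-}) gives
\beqnn
\int_{a_t}^{b_t}\frac{\ddr u}{\Psi(u)}=\int_{\lambda'}^{\lambda}\frac{\ddr u}{\Psi(u)}=:C,
\eeqnn
and then
\beqnn
\log\frac{b_t}{a_t}=\int_{a_t}^{b_t}\frac{\ddr u}{u}=\int_{a_t}^{b_t}\frac{\Psi(u)/u}{\Psi(u)}\,\ddr u.
\eeqnn
The Lévy–Khintchine form (\ref{Levykhintchine}) makes $\Psi$ convex with $\Psi(0)=0$, so $u\mapsto\Psi(u)/u$ is non-decreasing; on $[a_t\wedge b_t,a_t\vee b_t]$ it is therefore sandwiched between its endpoint values, which tend to $\Psi'(0+)$ as $a_t,b_t\to 0$ in the supercritical case and to $\textbf{d}$ as $a_t,b_t\to+\infty$ in the subcritical case. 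Factoring this bound through the fixed integral $C$ yields (\ref{vsup}) and (\ref{vsub}).

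The only delicate case is $\Psi'(0+)=-\infty$ (infinite mean) or $\textbf{d}=+\infty$ (infinite variation): the product with $C$ is unambiguous in $[-\infty,+\infty]$ because $C$ keeps a constant sign (negative when $\Psi<0$ and $\lambda>\lambda'$, positive when $\Psi>0$ and $\lambda>\lambda'$), but the sandwich bounds themselves diverge. I will handle this by truncating $\Psi(u)/u$ at an arbitrary finite level $M$, passing to the limit $t\to+\infty$ using monotonicity, and then letting $M\to\pm\infty$; this is the only technical subtlety I anticipate, and it is resolved straightforwardly once the monotonicity of $\Psi(u)/u$ is in hand.
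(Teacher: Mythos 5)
Your proposal is correct, and for the core of the lemma --- the ratio limits (\ref{vsup}) and (\ref{vsub}) --- it runs along a genuinely different execution than the paper's, though both hinge on the same structural fact, namely the monotonicity of $u\mapsto\Psi(u)/u$ coming from convexity and $\Psi(0)=0$. The paper keeps the integration interval fixed: it uses $\frac{\ddr}{\ddr \lambda}v_{t}(\lambda)=\Psi(v_{t}(\lambda))/\Psi(\lambda)$ to write $\frac{v_{-t}(\lambda)}{v_{-t}(\lambda')}=\exp\big(\int_{\lambda'}^{\lambda}\frac{\Psi(v_{-t}(u))}{v_{-t}(u)}\frac{\ddr u}{\Psi(u)}\big)$ and then lets $t\to+\infty$ by monotone convergence of the integrand. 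You instead work on the moving interval between $a_t=v_{-t}(\lambda')$ and $b_t=v_{-t}(\lambda)$, use the conserved quantity $\int_{a_t}^{b_t}\frac{\ddr u}{\Psi(u)}=\int_{\lambda'}^{\lambda}\frac{\ddr u}{\Psi(u)}=C$ supplied by (\ref{v-}), and sandwich $\log(b_t/a_t)=\int_{a_t}^{b_t}\frac{\Psi(u)/u}{\Psi(u)}\ddr u$ between $C$ times the endpoint values of $\Psi(u)/u$; the two representations are change-of-variable mirror images of each other (substitute $w=v_{-t}(u)$ in the paper's integral), so what differs is the limit-taking device: your sandwich avoids differentiating $v_t$ in $\lambda$ and the monotone convergence theorem, at the price of a sign-sensitive bookkeeping (harmless, since $1/\Psi$ keeps a constant sign between $a_t$ and $b_t$ and $C$ has a fixed sign). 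Note that the truncation you anticipate for $\Psi'(0+)=-\infty$ or $\textbf{d}=+\infty$ is unnecessary: when $C\neq 0$ both endpoint bounds diverge to the same infinity, so the sandwich already yields the (infinite) limit, and the case $\lambda=\lambda'$ is trivial --- this is exactly what the paper's monotone convergence handles automatically. Two small points: inverting $v_{s+t}=v_{s}\circ v_{t}$ literally gives $v_{-t}\circ v_{-s}$, so you should invoke commutativity of the cumulant semigroup ($v_{s+t}=v_{t}\circ v_{s}$) to get the stated order; and your justification of $\textbf{d}>0$ is not quite right --- since $\Psi(u)/u$ is non-decreasing with limits $\Psi'(0+)\geq 0$ at $0$ and $\textbf{d}$ at $+\infty$, $\textbf{d}=0$ would force $\Psi\equiv 0$, the degenerate deterministic case implicitly excluded, and this does not contradict persistence. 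The remaining assertions, which the paper delegates to Grey (well-definedness and monotonicity of $v_{-t}$, identity (\ref{v-}), the limits of $v_{-t}(\lambda)$), are argued correctly, with non-explosion and persistence invoked exactly where they are needed.
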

\begin{proof} We refer the reader to \cite{Grey}. We show (\ref{vsup}) and (\ref{vsub}). By (\ref{cumulant}), (\ref{cumulantintegral}) and (\ref{v-}), one can show that for all $t\in \mathbb{R}$, $\frac{\ddr}{\ddr \lambda}v_{t}(\lambda)=\frac{\Psi(v_{t}(\lambda))}{\Psi(\lambda)}$, and  therefore for any $\lambda, \lambda'$
$$\frac{v_{-t}(\lambda)}{v_{-t}(\lambda')}=\exp\left(\int_{\lambda'}^{\lambda}\frac{\ddr }{\ddr u}\log v_{-t}(u)\ddr u\right)=\exp\left(\int_{\lambda'}^{\lambda}\frac{\Psi(v_{-t}(u))}{v_{-t}(u)}\frac{\ddr u}{\Psi(u)}\right).$$ 
In the supercritical case, $v_{-t}(u)\underset{t\rightarrow +\infty}{\longrightarrow} 0$ and by convexity of $\Psi$, $\frac{\Psi(v_{-t}(u))}{v_{-t}(u)}$ decreases towards $\Psi'(0+)\in [-\infty,0)$ as $t$ goes to $+\infty$ . In the subcritical case, $v_{-t}(u)\underset{t\rightarrow +\infty}{\longrightarrow} +\infty$ and by convexity of $\Psi$, $\frac{\Psi(v_{-t}(u))}{v_{-t}(u)}$ increases towards $\textbf{d}\in (-\infty,+\infty]$ as $t$ goes to $+\infty$. The limits (\ref{vsup}) and (\ref{vsub}) are obtained by monotone convergence. 
\end{proof}
The following lemma holds for all non-explosive supercritical CSBPs and persistent subcritical ones. Note that in the case of a subcritical non-persistent CSBP, $\bar{v}=0$ and the following lemma is degenerate. We refer to Theorem 3.2.1 in \cite{Li} for a proof.
\begin{lemma}[Grey's martingale \cite{Grey}] \label{martingale} For all $x\geq 0$, and $\lambda\in (0,\bar v)$, the process \[(M^{\lambda}_{t}(x), t\geq 0):=(\exp\left(-v_{-t}(\lambda)X_{t}(x)\right), t\geq 0)\] is a positive martingale.
\end{lemma}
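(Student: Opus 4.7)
The strategy is to combine the Markov property of $X$, the exponential form of the cumulant in (\ref{cumulant}), and the semigroup identity for $v_{-t}$ from Lemma \ref{v}.

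\textbf{Step 1: well-definedness and integrability.} First I would check that $v_{-t}(\lambda)$ is well-defined for every $t\geq 0$ when $\lambda\in(0,\bar v)$. Since $t\mapsto \bar v_t$ is nonincreasing with limit $\bar v$ (Theorem \ref{Grey}-ii), the inequality $\lambda<\bar v\leq\bar v_t$ holds for all $t\geq 0$, so by Lemma \ref{v} the inverse $v_{-t}(\lambda)$ exists. The process $M_t^\lambda(x)\in(0,1]$ is then automatically bounded, hence integrable, and strictly positive.

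\textbf{Step 2: martingale identity.} For $0\leq s\leq t$, I would use the Markov property of the CSBP at time $s$, together with (\ref{cumulant}) applied with time horizon $t-s$ and Laplace parameter $v_{-t}(\lambda)$, to write
\begin{equation*}
\mathbb{E}\!\left[\exp\!\left(-v_{-t}(\lambda)X_t(x)\right)\,\big|\,\mathcal{F}_s\right]=\exp\!\left(-X_s(x)\,v_{t-s}\!\left(v_{-t}(\lambda)\right)\right).
\end{equation*}
The point is then to show $v_{t-s}(v_{-t}(\lambda))=v_{-s}(\lambda)$. This is a direct consequence of the cumulant semigroup identity $v_{s+r}=v_s\circ v_r$ from (\ref{cumulant}): setting $r=t-s$ and $\mu=v_{-t}(\lambda)$, one has $v_t(\mu)=v_s\circ v_{t-s}(\mu)=\lambda$, so $v_{t-s}(\mu)=v_{-s}(\lambda)$. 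Plugging back gives $\mathbb{E}[M_t^\lambda(x)\mid\mathcal{F}_s]=M_s^\lambda(x)$.

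\textbf{Main obstacle.} There is no real analytic difficulty here; the only subtlety is verifying that the argument $v_{-t}(\lambda)$ remains strictly below $\bar v_{t-s}$ so that the chain of identities above is legitimate, and that $v_{t-s}(\mu)$ coincides with $v_{-s}(\lambda)$ rather than some spurious branch. Both are handled by the monotonicity of $\bar v_t$ and the strict monotonicity of $\lambda\mapsto v_t(\lambda)$ on $[0,\bar v_t)$ recorded in Lemma \ref{v}; once this is noted, the computation of Step 2 yields the martingale property and the proof is complete.
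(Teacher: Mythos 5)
Your proof is correct, and it is the standard argument: the paper itself gives no proof but refers to Theorem 3.2.1 in Li's book, which proceeds exactly along these lines (conditional Laplace functional via the Markov property plus the semigroup identity $v_{t-s}(v_{-t}(\lambda))=v_{-s}(\lambda)$, justified by monotonicity of $\bar v_t$ and strict monotonicity of $v_s$). Your Step 1 also correctly handles the only delicate point, namely that $\lambda<\bar v\leq\bar v_t$ makes $v_{-t}(\lambda)$ well defined for all $t$.
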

\subsection{Continuous population model}\label{genealogy}
As noticed in \cite{LGB0}, the branching property (\ref{branching}) allows one to apply the Kolmogorov extension theorem and to define on some probability space a flow of subordinators $(X_t(x), x\geq 0, t\geq 0)$ such that 
\begin{enumerate}
\item[i)] for all $t\geq 0$ $(X_t(x), x\geq 0)$ is a subordinator with Laplace exponent $\lambda \mapsto v_{t}(\lambda)$ 
\item[ii)] for any $y\geq x$, $(X_t(y)-X_t(x), t\geq 0)$ is a CSBP$(\Psi)$ started from $y-x$, independent of $(X_t(x), t\geq 0)$.
\end{enumerate}
This provides a genuine continuous population model: the individual $a$ living at time $0$ has for descendant $b$ at time $t$, if $$X_{t}(a-)<b<X_{t}(a).$$
Duquesne and Labb\'e in \cite{DuqLab} (see Theorem 0.2 and Theorem 1.8 in \cite{DuqLab}), provide a construction for any mechanism $\Psi$ of the flow $(X_t(x), x\geq 0, t\geq 0)$ via a Poisson point process on the space of c\`adl\`ag trajectories (see also Dawson and Li in \cite{DawsonLi} for an approach with Poisson driven stochastic differential equations). 

In the case of infinite variation, the Laplace exponent $\lambda \mapsto v_{t}(\lambda)$ is driftless and thus takes the form $v_{t}(\lambda)=\int_{(0,+\infty]}\ell_{t}(\ddr x)(1-e^{-\lambda x})$ for a certain L\'evy measure $\ell_{t}$ on $\mathbb{R}^{+}\cup \{+\infty\}$ such that $\int_{(0, +\infty]}(1\wedge x)\ell_{t}(\ddr x)<+\infty$.  As noticed in Chapter 3 of \cite{Li}, $(\ell_t, t>0)$ is an entrance law for the semi-group of the CSBP$(\Psi)$. This yields the existence of a measure $N_{\Psi}$ (called cluster measure in \cite{DuqLab}, and canonical measure in \cite{Li}) on the space $\mathcal{D}$ of c\`adl\`ag paths from $\mathbb{R}^{\star}_+$ to $\mathbb{R}_{+}\cup \{+\infty\}$ such that for any non-negative function $F$
\begin{equation}\label{clustermeasure}
N_\Psi(F(X_{t+\cdot}); X_t>0)=\int_{(0,+\infty]}\ell_{t}(\ddr x)\mathbb{E}_{x}^{\Psi}(F) \text{ and } N_\Psi(X_0>0)=0
\end{equation} 
and for any non-negative function $F,G$, the Markov property holds:
\begin{equation}\label{markov}
N_\Psi[F(X_{\cdot\wedge t})G(X_{t+\cdot}); X_t>0]=N_{\Psi}[F(X_{\cdot\wedge t})\mathbb{E}_{X_t}[G]; X_t>0].
\end{equation}
Consider a Poisson point process $\mathcal{N}=\sum_{i\in I}\delta_{(x_i, X^i)}$ over $\mathbb{R}_+\times \mathcal{D}$ with intensity $\ddr x \otimes N_{\Psi}(\ddr X)$ and set for all $x\geq 0$ and $t>0$, 
\begin{equation}\label{poissoninfinitevariation} X_t(x)=\sum_{x_i\leq x}X^i_t, \end{equation}
with $X_0(x)=x$. The flow $(X_t(x), x\geq 0, t\geq 0)$ defined by (\ref{poissoninfinitevariation}) satisfies the properties i) and ii). 

In the case of finite variation, one can construct a flow $(X_t(x), x\geq 0, t\geq 0)$ in a Poisson manner as in \cite{DuqLab} (see Equation 1.25). Recall $\textbf{d}:=\lim_{u\rightarrow +\infty}\frac{\Psi(u)}{u}$.  Consider a Poisson point process $\mathcal{N}=\sum_{i\in I}\delta_{(x_i, t_i, X^i)}$ over $\mathbb{R}_+\times \mathbb{R}_+\times \mathcal{D}$ with intensity $\ddr x \otimes e^{-\textbf{d}t}\ddr t\otimes \int_{0}^{+\infty}\pi(\ddr r)\mathbb{P}_{r}^{\Psi}(\ddr X)$. Set \begin{equation}\label{poissonfinitevariation} X_{t}(x)=e^{-\textbf{d}t}x+\sum_{x_{i}\leq x}1_{\{t_i\leq t\}}X^{i}_{t-t_i}.
\end{equation}
The flow $(X_t(x), x\geq 0, t\geq 0)$ defined by  (\ref{poissonfinitevariation}) satisfies the properties i) and ii). 

In both finite or infinite variation case, for any $x\in \mathbb{R}_+$, we define $\Delta X_t(x):= X_{t}(x)-X_{t}(x-)$, this represents the progeny at time $t$ of the individual $x$ at time $0$. By the Poisson construction, any individual with a non zero progeny at a certain time belongs to $\{x_i, i\in I\}$.
\begin{remark} One can understand $N_{\Psi}$ as the L\'evy measure of the path-valued subordinator $(X_{t}(x), x\geq 0)_{t\geq 0}$. Roughly speaking, we add CSBPs with the same mechanism $\Psi$ started at individuals with zero mass. We refer to Li \cite{MR3161481} for a recent work on path-valued processes.  
\end{remark}
In the rest of the paper, if not otherwise specified, the flows that we consider are all constructed from Poisson point processes as in (\ref{poissoninfinitevariation}) and (\ref{poissonfinitevariation}).
\subsection{Extremal processes}\label{extremal}
We gather here some of the fundamental properties of extremal processes that can be found in Chapter 4, Section 3 in Resnick \cite{Res87}. Let $F$ be a probability distribution function with a given support $(s_l,s_o)\subset \overline{\mathbb{R}}$. A real-valued process $(M_x,x\geq 0)$ is an extremal-$F$ process 
if for any $0\leq x_1\leq ...\leq x_n$ and $(z_1,...,z_n) \in \mathbb{R}^n$,
\begin{equation}\label{extremalprocess}
\mathbb{P}(M_{x_1}\leq z_1,M_{x_2}\leq z_2,...,M_{x_n}\leq z_n)=F^{x_1}(z'_1)F^{x_2-x_1}(z'_2)...F^{x_n-x_{n-1}}(z'_n)
\end{equation}
where $z'_i=\wedge_{k=i}^{n} z_{k}$ for all $i\geq 1$. Any extremal-$F$ process $(M_x, x\geq 0)$ has the following properties:
\begin{itemize}
\item[i)] $(M_x, x\geq 0)$ is stochastically continuous.
\item[ii)] $(M_x, x\geq 0)$ has a c\`adl\`ag version.
\item[iii)] $(M_x, x\geq 0)$ has a version with non-decreasing paths such that $\lim_{x\rightarrow +\infty}M_x=s_o$ and $\lim_{x\rightarrow 0}M_x=s_l$ almost-surely.
\item[iv)]  $(M_x, x\geq 0)$ is a Markov process with for $x>0, y>0$, \beqlb\label{sg}
\mathbb{P}(M_{x+y}\leq z \ |M_{x}=v)= \begin{cases} F^{y}(z) & \text{ if } z\geq v\\
0 &\text{ if }z<v.
\end{cases}
\eeqlb
For all $x\geq 0$, set $Q(x)=-\log F(x)$. The parameter of the exponential holding time in state $x$ is $Q(x)$, and the process jumps from $x$ to $(x,y]$ with probability $1-\frac{Q(y)}{Q(x)}$. The only possible instantaneous state is $s_l$ and it is instantaneous if and only if $F(s_l)=0$.
\end{itemize} 
Any process $(M_x,x\geq 0)$ verifying 
\begin{equation}\label{maxid}
\begin{cases}
&\mathbb{P}(M_x\leq z)=F(z)^{x} \text{ for all } z\in \mathbb{R} \text{ and } x\geq 0\\
&M_{x+y}=M_{x}\vee M'_{y} \text{ a.s.} \text{ for all }x,y\in \mathbb{R}_+ \end{cases}
\end{equation}
where $M'_y$ is independent of $(M_u, 0\leq u\leq x)$  and $M'_y\overset{d}=M_y$, satisfies (\ref{extremalprocess}) and is therefore an extremal-$F$ process. A constructive approach of extremal processes is given by the records of a Poisson point process. Let $\mu$ be a $\sigma$-finite measure on $(0,+\infty)$, and consider a Poisson point process $\mathcal{P}=\sum_{i\in I}\delta_{(x_i,Z_i)}$ with intensity $dx\otimes \mu$. The process $(M_x, x\geq 0)$ defined by $M_x=\sup_{x_{i}\leq x}Z_{i}$ is a c\`adl\`ag extremal-$F$ process with for all $z\in \mathbb{R}$, $F(z)=\exp\left(-\bar{\mu}(z)\right)$ where $\bar{\mu}(z)=\mu(z,+\infty)$. We highlight that the state $0$ is instantaneous if the intensity measure $\mu$ is infinite. The positive extremal processes will play an important role in the sequel. They correspond to the records of Poisson point processes over $\mathbb{R}_+\times \mathbb{R}_+$.  

An interesting feature of extremal processes lies in their \textit{max-infinite divisibility}. Namely, for any integer $m$, 
\begin{equation}\label{maxid2}(M_x, x\geq 0)\overset{d}{=} (\underset{i\in [|1,m|]}{\max M^i_x}, x\geq 0)
\end{equation}
where $(M^i_x,x\geq 0)_{i\in [|1,m|]}$ are i.i.d extremal-$F^{1/m}$ processes. Indeed, 
\begin{align*}
\mathbb{P}(\underset{i\in [|1,m|]} \max M^i_{x_1}\leq z_1,... ,\underset{i\in [|1,m|]} \max M^i_{x_n}\leq z_n)&=\left(F^{\frac{x_1}{m}}(z'_1)F^{\frac{x_2-x_1}{m}}(z'_2)...F^{\frac{x_n-x_{n-1}}{m}}(z'_n)\right)^m\\
&=\mathbb{P}(M_{x_1}\leq z_1,M_{x_2}\leq z_2,...,M_{x_n}\leq z_n).
\end{align*}
We refer the reader to Dwass \cite{dwass1966} and Resnick and Rubinovitch \cite{APR:10159662} for more details about extremal processes.
 
\section{Supercritical processes}\label{supercritical} 

\subsection{Prolific individuals}
Consider $\Psi$ a supercritical mechanism, namely such that $\Psi'(0+)\in [-\infty,0)$. Consider a flow of CSBPs $(X_{t}(x), x\geq 0, t\geq 0)$ with mechanism $\Psi$ and recall the notion of prolific individual defined in Definition \ref{prolific}. The following lemma generalizes Lemma 2 in \cite{MR2455180} to the finite variation case.
\begin{lemma}\label{Bertoin} The set of prolific individuals
$$\mathcal{P}:=\{x\in \mathbb{R}_+; \lim_{t\rightarrow +\infty}\Delta X_{t}(x)=+\infty\}$$
is almost-surely non-empty. If $\rho<+\infty$, $(\#\mathcal{P}\cap [0,x], x\geq 0)$ is a Poisson process with intensity $\rho$. If $\rho=+\infty$ then $\mathcal{P}=\{x_{i}, i\in I\}$.  
\end{lemma}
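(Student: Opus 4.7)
The plan is to leverage the Poisson constructions (\ref{poissoninfinitevariation}) and (\ref{poissonfinitevariation}). In both, the flow is built by superposing contributions indexed by the atoms $\{x_i, i\in I\}$ of an underlying Poisson point process, and $\Delta X_t(x)>0$ can only occur when $x$ coincides with one of these atoms. Consequently $\mathcal{P}\subset\{x_i, i\in I\}$, and what remains is to single out those atoms whose associated cluster path diverges to $+\infty$; the resulting set will be realized as a thinning of the driving Poisson point process.

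I first dispose of the case $\rho=+\infty$. By Theorem \ref{Grey}-i), this forces $-\Psi$ to be the Laplace exponent of a subordinator, hence finite variation, and makes every CSBP$(\Psi)$ non-decreasing with $\lim_{t\to+\infty}X_t=+\infty$ almost surely. Applied to each cluster $X^i$ in (\ref{poissonfinitevariation}), this yields $\Delta X_t(x_i)=X^i_{t-t_i}\to+\infty$, so $\mathcal{P}=\{x_i, i\in I\}$ as claimed.

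For the main case $\rho<+\infty$, I handle the two constructions in parallel. In the infinite-variation setting, the atom $x_i$ is prolific iff $X^i_t\to+\infty$. Slicing at an arbitrary $t>0$ (possible because $\{\lim_s X_s=+\infty\}\subset\{X_t>0\}$) and combining the entrance-law identity (\ref{clustermeasure}) with Theorem \ref{Grey}-i) gives
$$N_\Psi\bigl(\lim_{s\to+\infty}X_s=+\infty\bigr)=\int_{(0,+\infty]}\ell_t(\ddr r)(1-e^{-r\rho})=v_t(\rho).$$
Since $\Psi(\rho)=0$ and $v_t$ solves $\partial_t v_t=-\Psi(v_t)$, the constant $\rho$ is a stationary solution, hence $v_t(\rho)=\rho$ for every $t\geq 0$. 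Restriction and thinning of Poisson point processes then force the prolific atoms to form a Poisson process on $\mathbb{R}_+$ with intensity $\rho$. In the finite-variation setting, the atom $(x_i, t_i, X^i)$ is prolific iff $X^i_s\to+\infty$, and integrating out the time and path coordinates of the driving intensity yields
$$\int_0^{+\infty} e^{-\textbf{d}t}\ddr t\int_0^{+\infty}(1-e^{-r\rho})\pi(\ddr r)=\frac{1}{\textbf{d}}\int_0^{+\infty}(1-e^{-r\rho})\pi(\ddr r)=\rho,$$
where the last equality uses the representation (\ref{finitevariationpsi}) together with $\Psi(\rho)=0$. Finally, $\rho>0$ in the supercritical regime guarantees that a Poisson process of intensity $\rho$ on $\mathbb{R}_+$ has infinitely many points almost surely, so $\mathcal{P}$ is non-empty. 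The only mild obstacle is the $\sigma$-finiteness of $N_\Psi$, which is resolved by the restriction to $\{X_t>0\}$: on this event the $N_\Psi$-mass of interest is already finite and equal to $\rho$, which is precisely what legitimates the thinning argument.
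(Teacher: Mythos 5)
Your proof is correct and takes essentially the same route as the paper: identify prolific individuals with the atoms whose clusters diverge, compute the intensity of the thinned point process via the cluster measure giving $v_t(\rho)=\rho$ in the infinite-variation case and via $\int_0^{+\infty}e^{-\textbf{d}t}\ddr t\int_0^{+\infty}(1-e^{-r\rho})\pi(\ddr r)=\rho$ in the finite-variation case, and settle $\rho=+\infty$ through Theorem \ref{Grey}-i). The only cosmetic difference is that you deduce non-emptiness from the Poisson process of intensity $\rho>0$ having infinitely many points on $\mathbb{R}_+$, whereas the paper gets it from $\mathbb{P}(\mathcal{P}\cap[0,a_n]\neq\emptyset)=1-e^{-\rho a_n}\rightarrow 1$; both are fine.
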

\begin{proof} For any $a_n>0$, the set $\mathcal{P}\cap [0,a_n]$ is non-empty if and only if $(X_{t}(a_n), t\geq 0)$ is not extinguishing. Consider a non-decreasing sequence $(a_n)_{n\geq 1}$ such that $a_n\underset{n\rightarrow +\infty}{\longrightarrow}+\infty$:  $\mathbb{P}(\mathcal{P}\cap [0,a_n]\neq \emptyset)=1-e^{-\rho a_n}\underset{n\rightarrow +\infty}{\longrightarrow} 1$. Therefore, $\mathbb{P}(\mathcal{P}\neq \emptyset)=1$. Assume $\rho<+\infty$, in the infinite variation case, one has $N_\Psi(X_t\underset{t\rightarrow +\infty}\longrightarrow +\infty, X_s>0)=\int_{(0,+\infty]}\ell_s(\ddr x)\mathbb{P}_{x}^{\Psi}(X_t\underset{t\rightarrow +\infty}{\longrightarrow} +\infty)=\int_{(0,+\infty]}\ell_s(\ddr x)(1-e^{-x\rho})=v_{s}(\rho)=\rho$. By letting $s$ to $0$, we see that the restriction of $\mathcal{P}$ to the atoms $x_i$ such that $X^i_t\underset{t\rightarrow +\infty}{\longrightarrow} +\infty$ is a Poisson point process with intensity $\rho\ddr x$. In the finite variation case, if $\rho<+\infty$, then $\textbf{d}>0$ and $$\int_{0}^{+\infty}e^{-\textbf{d}t}\ddr t\int_{0}^{+\infty}\pi(\ddr r)\mathbb{P}_{r}^{\Psi}(X_{u-t}\underset{u\rightarrow +\infty}{\longrightarrow} +\infty)=\frac{1}{\textbf{d}}\int_{0}^{+\infty}\pi(\ddr r)(1-e^{-r\rho})=\frac{-\Psi(\rho)+\textbf{d}\rho}{\textbf{d}}=\rho.$$
The restriction of $\mathcal{P}$ to the atoms $x_i$ such that $X^i_{t-t_i}\underset{t\rightarrow +\infty}{\longrightarrow} +\infty$ is therefore also a Poisson point process over $\mathbb{R}_+$ with intensity $\rho\ddr x$.
Assume $\rho=+\infty$, then by Theorem \ref{Grey}-i), $\mathbb{P}_r(X_t \underset{t\rightarrow +\infty}{\not \longrightarrow} +\infty)=0$, and thus $\int_{0}^{+\infty}\pi(\ddr r)\mathbb{P}^{\Psi}_r(X_t\underset{t\rightarrow +\infty}{ \not \longrightarrow} +\infty)=0$. Therefore $\mathcal{P}=\{x_i; i\in I\}$.\end{proof}
\subsection{CSBPs with finite mean and subordinators.} 
We briefly study the case of a branching mechanism with finite mean to show that the notion of super-prolific individual is degenerate. The following proposition is essentially a rewriting of Proposition 2.1 and Lemma 2.2 in \cite{DuqLab}, but is important before treating the infinite mean case. 
\begin{Prop}[Proposition 2.1 and Lemma 2.2 in \cite{DuqLab}]\label{finite} Suppose $\Psi'(0+)\in(-\infty,0)$ and fix $\lambda\in (0,\rho)$. Consider a flow of CSBPs($\Psi$) $(X_t(x),x\geq 0,t\geq 0)$ defined as in (\ref{poissoninfinitevariation}) or (\ref{poissonfinitevariation}). There exists a c\`adl\`ag driftless subordinator $(W^{\lambda}(x), x\geq 0)$ with Laplace exponent $\theta\mapsto v_{\frac{\log(\theta)}{-\Psi'(0+)}}(\lambda)$ such that almost-surely for any $x>0$, $$v_{-t}(\lambda)X_{t}(x) \underset{t\rightarrow +\infty}\longrightarrow W^{\lambda}(x) \text{ and } v_{-t}(\lambda)X_{t}(x-) \underset{t\rightarrow +\infty}\longrightarrow W^{\lambda}(x-).$$ The L\'evy measure of $(W^{\lambda}(x), x\geq 0)$ has total mass $\rho\in (0,+\infty]$. Moreover 
$$\mathcal{P}=\{x>0; W^{\lambda}(x)>W^{\lambda}(x-)\}.$$
If $\rho=+\infty$, $\mathcal{S}\cap \mathcal{P}=\emptyset$ a.s.; if $\rho<+\infty$,  $\mathcal{S}\cap \mathcal{P}=\{x^{\star}\}$ a.s.\,with $x^{\star}$ the time of the first jump of $(W^{\lambda}(x),x\geq 0)$. 
\end{Prop}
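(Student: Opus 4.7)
The plan is to obtain the first three assertions by rerunning the argument of \cite{DuqLab} (Proposition 2.1 and Lemma 2.2), and then to derive the new characterization of $\mathcal{S}\cap\mathcal{P}$ directly from the subordinator limit. First, Grey's martingale $M^\lambda_t(x)=\exp(-v_{-t}(\lambda)X_t(x))$ from Lemma \ref{martingale} is positive and bounded by $1$, so it converges a.s.\,in $t$; continuity of $-\log$ yields the a.s.\,convergence $v_{-t}(\lambda)X_{t}(x)\to W^\lambda(x)\in[0,+\infty]$. To identify the limit law, I would use that for each $t$ the process $(v_{-t}(\lambda)X_{t}(x))_{x\geq 0}$ is a subordinator with Laplace exponent $\theta\mapsto v_{t}(v_{-t}(\lambda)\theta)$, and then combine the integral equation (\ref{v-}) with the expansion $\Psi(u)/u\to\Psi'(0+)$ as $u\downarrow 0$ to show
\begin{equation*}
\underset{t\rightarrow +\infty}{\lim}\,v_{t}(v_{-t}(\lambda)\theta)=v_{\log\theta/(-\Psi'(0+))}(\lambda).
\end{equation*}
Letting $\theta\to+\infty$ in this exponent yields $\underset{s\to+\infty}{\lim}v_{s}(\lambda)=\rho$ as the total mass of the L\'evy measure, with no drift term. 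The almost-sure joint convergence in $x>0$, including the left-limits $v_{-t}(\lambda)X_{t}(x-)\to W^\lambda(x-)$, is obtained from the Poisson construction (\ref{poissoninfinitevariation})--(\ref{poissonfinitevariation}) atom by atom: each cluster satisfies $v_{-t}(\lambda)X^i_{t}\to W^i$ a.s., and the subordinator $W^\lambda$ is recovered as $W^\lambda(x)=\sum_{x_i\leq x}W^i$. Identifying prolific atoms as those with $W^i>0$ then yields $\mathcal{P}=\{x>0;\,\Delta W^\lambda(x)>0\}$.

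Granting the above, the decisive identity for describing $\mathcal{S}\cap\mathcal{P}$ is
\begin{equation*}
\frac{\Delta X_{t}(x)}{X_{t}(x-)}=\frac{v_{-t}(\lambda)\Delta X_{t}(x)}{v_{-t}(\lambda)X_{t}(x-)}\underset{t\rightarrow +\infty}{\longrightarrow}\frac{\Delta W^\lambda(x)}{W^\lambda(x-)},
\end{equation*}
with the convention $c/0=+\infty$ for $c>0$. For a prolific $x$ one has $\Delta W^\lambda(x)>0$, so $x\in\mathcal{S}$ if and only if $W^\lambda(x-)=0$. I would then split on $\rho$: when $\rho=+\infty$, the L\'evy measure of $W^\lambda$ has infinite total mass, so jumps accumulate in every interval $(0,x]$, giving $W^\lambda(x-)>0$ for all $x>0$ and hence $\mathcal{S}\cap\mathcal{P}=\emptyset$ a.s.; when $\rho<+\infty$, $W^\lambda$ is compound Poisson with finite intensity $\rho$, its first jump time $x^\star$ is a.s.\,well-defined and satisfies $W^\lambda(x^\star-)=0$ and $\Delta W^\lambda(x^\star)>0$, while any later jump $y>x^\star$ satisfies $W^\lambda(y-)\geq\Delta W^\lambda(x^\star)>0$ and $\Delta W^\lambda(y)<+\infty$. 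Hence $\mathcal{S}\cap\mathcal{P}=\{x^\star\}$ a.s.

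The main obstacle is the simultaneous-in-$x$ a.s.\,convergence of the left-limits $v_{-t}(\lambda)X_{t}(x-)\to W^\lambda(x-)$: convergence at a single fixed $x$ is immediate from Grey's martingale, but upgrading to a functional limit in the $x$-variable that respects left-limits at atoms really is the content of \cite{DuqLab} and requires a monotonicity argument in $x$ together with the atomic description provided by the Poisson construction. Once that is granted, every remaining point above is either a short computation via (\ref{v-}) or a two-line consequence of the structure of $W^\lambda$.
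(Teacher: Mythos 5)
Your treatment of the $\mathcal{S}\cap\mathcal{P}$ characterization matches the paper's own argument: the key identity $\Delta X_t(x)/X_t(x-)\to\Delta W^\lambda(x)/W^\lambda(x-)$, the observation that $W^\lambda(x-)>0$ for all $x>0$ when the L\'evy measure is infinite, and the compound-Poisson first-jump structure when $\rho<+\infty$ are exactly what the paper uses. Your account of the subordinator convergence itself is correctly attributed to \cite{DuqLab}, and you rightly flag that the simultaneous convergence in $x$ (including left limits) is the part that really needs \cite{DuqLab}'s work.

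The genuine gap is the identification $\mathcal{P}=\{x>0;\Delta W^\lambda(x)>0\}$, which you dismiss in one clause as ``identifying prolific atoms as those with $W^i>0$.'' This is the nontrivial direction and it is a result of \emph{this} paper, not of \cite{DuqLab}. The inclusion $\{\Delta W^\lambda(x)>0\}\subset\mathcal{P}$ is immediate from $v_{-t}(\lambda)\to 0$, but the converse is not: a priori a cluster could satisfy $X^i_t\to+\infty$ yet grow more slowly than $1/v_{-t}(\lambda)$, leaving $W^i=0$. The paper closes this in two different ways. When $\rho=+\infty$ it notes $\mathbb{P}_r(W^\lambda=0)=e^{-r\rho}=0$ for every $r>0$, so every atom has $W^i>0$ and both sets equal $\{x_i,i\in I\}$. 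When $\rho<+\infty$ it appeals to Lemma \ref{Bertoin}: $\#\mathcal{P}\cap(0,n)$ and $\#\{0<x<n;\Delta W^\lambda(x)>0\}$ are both Poisson with mean $\rho n$ and the second is pointwise bounded by the first, so they agree almost surely. Your atom-by-atom route could in principle be made rigorous (transport the fixed-point equality $\{W^\lambda(r)>0\}=\{X_t(r)\to+\infty\}$, which holds $\mathbb{P}_r$-a.s.\ because both events have probability $1-e^{-r\rho}$ and one contains the other, to the cluster measure via $N_\Psi(\cdot,X_s>0)=\int\ell_s(\ddr x)\mathbb{P}_x(\cdot)$ and let $s\downarrow 0$), but as written this step is simply asserted; you should either supply that measure-theoretic argument or use the paper's counting argument for $\rho<+\infty$.
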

\begin{remark} The infinite divisibility of $(W^{\lambda}(x), x\geq 0)$ ensures that the random variable $W^{\lambda}(x)$ has the same law as the sum of $n$ copies of $W^{\lambda}(x/n)$:
\[W^{\lambda}(x)\overset{d}{=}W^1(x/n)+...+W^n(x/n).\]
In a loose sense, the progeny of the individuals $[0,x]$ grows as the sum of prolific individuals progenies in $[0,x]$. 
\end{remark}
\begin{proof} We only show that $\mathcal{P}=\{x>0; W^{\lambda}(x)>W^{\lambda}(x-)\}$ and $\mathcal{S}\cap \mathcal{P}=\{x^{\star}\}  \text{ a.s}$ if $\rho<\infty$, $\mathcal{S}\cap \mathcal{P}=\emptyset$ if $\rho=\infty$.  Lemma 2.2 in \cite{DuqLab} states that $(W^{\lambda}(x), x\geq 0)$ is a subordinator such that  
$\underset{t\rightarrow \infty}{\lim} v_{-t}(\lambda)\Delta X_t(x)=\Delta W^{\lambda}(x)$. By Lemma \ref{v}, $v_{-t}(\lambda)\underset{t\rightarrow +\infty}\longrightarrow 0$. This implies $$\{x>0, \Delta W^{\lambda}(x)>0\}\subset\{x>0, \lim_{t\rightarrow \infty}\Delta X_{t}(x)=\infty\}.$$ Moreover, when $\rho<\infty$, $(W^{\lambda}(x), x\geq 0)$ is a compound Poisson process with intensity $\rho$. By Lemma \ref{Bertoin}, for any $n\in \mathbb{N}$, $\#\{0<x<n; \Delta W^{\lambda}(x)>0\}\leq \#\{0<x<n; \lim_{t\rightarrow \infty}\Delta X_{t}(x)=\infty\}$. For any $n$, these two random variables have the same Poisson law with parameter $\rho n$ and therefore for all $n$, almost-surely
$$\{0<x<n, \Delta W^{\lambda}(x)>0\}=\{0<x<n, \lim_{t\rightarrow \infty}\Delta X_{t}(x)=\infty\}.$$
By letting $n$ to infinity, we get that $\mathcal{P}=\{x>0; \Delta W^{\lambda}(x)>0\}$ almost-surely. If $\rho=\infty$, one has $\mathbb{P}_r(v_{-t}(\lambda)X_t\underset{t\rightarrow \infty}{\longrightarrow} 0)=0$ and thus $\{x>0, \Delta W^{\lambda}(x)>0\}=\{x_i, i\in I\}=\mathcal{P}.$ Moreover, if $\rho<\infty$, then $\mathcal{S}\cap \mathcal{P}=\left\{x>0; \Delta W^{\lambda}(x)>0 \text{ and } \frac{\Delta W^{\lambda}(x)}{W^{\lambda}(x-)}=+\infty\right\}=\{x^{\star}\}$, since $W^{\lambda}_{x^\star-}=0$. If $\rho=\infty$, then for all $x>0$, $\underset{t\rightarrow \infty}{\lim} \frac{\Delta X_{t}(x)}{X_{t}(x-)}=\frac{\Delta W^{\lambda}(x)}{W^{\lambda}(x-)}<\infty$,
therefore $\mathcal{S}\cap \mathcal{P}=\emptyset.$ 
\end{proof}
\subsection{CSBPs with infinite mean and extremal processes}
\begin{theorem}\label{main11} Suppose $\Psi'(0+)=-\infty$ and $\int_{0}\frac{\ddr u}{\Psi(u)}=-\infty$. Fix $\lambda_0\in(0,\rho)$, and define $G(y):=\exp\left(-\int^{\lambda_0}_{y}\frac{\ddr u}{\Psi(u)}\right)$ for $y\in (0,\rho)$.Then, for all $x\geq 0$, almost-surely
$$e^{-t}G\left(\frac{1}{X_{t}(x)}\wedge \rho\right) \underset{t\rightarrow +\infty}\longrightarrow \tilde{Z}(x),$$ where $(\tilde{Z}(x), x\geq 0)$ is a positive extremal-$F$ process (in the sense of (\ref{extremalprocess})) with $F(z)=\exp\left(-G^{-1}(z)\right)$ for $z \in [0,+\infty]$, and $G^{-1}(z)=v_{\log \left(\frac{1}{z}\right)}(\lambda_0)$ for all $z \in [0,+\infty]$.
\end{theorem}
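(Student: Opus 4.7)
The plan is to identify the limit $\tilde{Z}(x)$ as a threshold extracted from a family of Grey martingales, the decisive input being that each such martingale is a.s.\ $\{0,1\}$-valued in the infinite-mean regime. Throughout, fix $\mu\in(0,\rho)$ and write $M_{t}^{\mu}(x):=\exp(-v_{-t}(\mu)X_{t}(x))$, which by Lemma~\ref{martingale} is a bounded martingale, hence converges a.s.\ and in every $L^{p}$ to some $M_{\infty}^{\mu}(x)$; I would set $A_{\mu}(x):=\{M_{\infty}^{\mu}(x)=1\}=\{v_{-t}(\mu)X_{t}(x)\to 0\}$ and note that since $v_{-t}(\cdot)$ is increasing, the events $A_{\mu}(x)$ are decreasing in $\mu$.

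First I would prove the dichotomy $M_{\infty}^{\mu}(x)\in\{0,1\}$ a.s.\ by matching all integer moments. From $\mathbb{E}[(M_{t}^{\mu}(x))^{n}]=e^{-x\,v_{t}(n v_{-t}(\mu))}$, it suffices to show $v_{t}(n v_{-t}(\mu))\to\mu$; writing $\delta_{t}:=v_{-t}(\mu)\to 0$ and using $\partial_{\lambda}v_{t}(\lambda)=\Psi(v_{t}(\lambda))/\Psi(\lambda)$, the rescaling $\lambda=\delta_{t}v$ in $\int_{\delta_{t}}^{n\delta_{t}}\Psi(v_{t}(\lambda))/\Psi(\lambda)\,\ddr\lambda$ yields $v_{t}(n\delta_{t})-\mu\to 0$ because the infinite-mean hypothesis $\Psi(\lambda)/\lambda\to-\infty$ makes the integrand negligible. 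Hence $\mathbb{E}[(M_{\infty}^{\mu}(x))^{n}]=e^{-\mu x}$ for every $n\ge 1$, so $\mathbb{E}[M_{\infty}^{\mu}(x)(1-M_{\infty}^{\mu}(x))]=0$ and the dichotomy follows, with $\mathbb{P}(A_{\mu}(x))=e^{-\mu x}$.

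Next I would deduce almost-sure convergence of the renormalisation. A direct consequence of (\ref{v-}) is the identity $G(v_{-t}(\mu))=e^{t}G(\mu)$, so for $z>0$ and $\mu:=G^{-1}(z)$, the monotonicity of $G$ rewrites
\[\{e^{-t}G(1/X_{t}(x)\wedge\rho)\le z\}=\{X_{t}(x)\le 1/v_{-t}(\mu)\},\]
modulo the extinction regime on which both sides eventually vanish. On $A_{\mu}(x)$ this holds eventually, while on $A_{\mu}(x)^{c}$ one has $v_{-t}(\mu)X_{t}(x)\to+\infty$ and the strict reverse holds eventually. Taking $z$ through $\mathbb{Q}_{+}$ produces a.s.\ convergence of $e^{-t}G(1/X_{t}(x)\wedge\rho)$ to
\[\tilde{Z}(x):=\inf\{z>0:\omega\in A_{G^{-1}(z)}(x)\},\]
whose marginal law is $\mathbb{P}(\tilde{Z}(x)\le z)=\mathbb{P}(A_{G^{-1}(z)}(x))=e^{-xG^{-1}(z)}=F^{x}(z)$.

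Finally, the extremal-$F$ structure is read off the branching property. For $0=x_{0}<x_{1}<\cdots<x_{n}$, decompose $X_{t}(x_{i})=\sum_{j\le i}Y_{t}^{(j)}$ into independent CSBPs $Y_{t}^{(j)}:=X_{t}(x_{j})-X_{t}(x_{j-1})$ of masses $x_{j}-x_{j-1}$; since a finite sum of non-negative terms vanishes iff each term does, writing $A^{(j)}_{\mu}$ for the analog of $A_{\mu}$ attached to $Y^{(j)}$ gives
\[\bigcap_{i=1}^{n}\{\tilde{Z}(x_{i})\le z_{i}\}=\bigcap_{i=1}^{n}\bigcap_{j\le i}A^{(j)}_{G^{-1}(z_{i})}=\bigcap_{j=1}^{n}A^{(j)}_{G^{-1}(z'_{j})},\quad z'_{j}:=\min_{i\ge j}z_{i},\]
where the last equality uses the $\mu$-monotonicity of the $A^{(j)}_{\mu}$. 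Independence of the $Y^{(j)}$ then yields $\prod_{j=1}^{n}F^{x_{j}-x_{j-1}}(z'_{j})$, which matches (\ref{extremalprocess}) exactly. The main obstacle is the dichotomy in the second paragraph: this is where the infinite-mean hypothesis $\Psi'(0+)=-\infty$ is indispensable, via the asymptotic $v_{t}(nv_{-t}(\mu))\to\mu$; once established, the remaining steps are essentially bookkeeping.
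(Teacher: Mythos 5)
Your argument is correct, and its first two thirds coincide with the paper's proof: the dichotomy you obtain by moment-matching for the Grey martingale is exactly the paper's Lemma~\ref{WW} (the key limit $v_t(n v_{-t}(\mu))\to\mu$ is (\ref{theta}) with $\theta=n$), and your threshold $\tilde Z(x)=\inf\{z>0:A_{G^{-1}(z)}(x)\}$ together with the sandwich based on $G(v_{-t}(\mu))=e^tG(\mu)$ is the paper's $\Lambda_x$-argument in the variables $z=G(\mu)$. Where you genuinely diverge is the identification of the finite-dimensional laws: the paper computes joint Laplace transforms of the $W^{\lambda_i}(x_i)$ (written out only for $n=2$ ``to avoid cumbersome notations''), which requires the extra asymptotic $v_t(v_{-t}(\lambda_1)+v_{-t}(\lambda_2))\to\lambda_1\vee\lambda_2$, again via $\Psi'(0+)=-\infty$; you instead exploit the pathwise identity $A_\mu(x_i)=\bigcap_{j\le i}A^{(j)}_\mu$ (a finite sum of nonnegative terms scaled by $v_{-t}(\mu)$ vanishes iff each term does), monotonicity in $\mu$, and independence of the increments, which yields all $n$-dimensional marginals of (\ref{extremalprocess}) in one stroke with no further analysis. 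Your route buys a cleaner and fully general fdd computation; the paper's Laplace-transform route avoids event manipulations and produces limits that are reused later (e.g.\ in Lemma~\ref{thethreeinfinitevar}). Three small points you should tighten: (a) in the dichotomy step, ``the integrand is negligible'' needs an a priori bound on $\Psi(v_t(\lambda))$ over $[\delta_t,n\delta_t]$; it is cleaner to argue as in (\ref{equav}), bounding $\bigl|\int_{v_{-t}(\mu)}^{n v_{-t}(\mu)}\frac{\ddr z}{\Psi(z)}\bigr|\le\frac{\log n}{b}$ using only $|\Psi(z)|\ge bz$ near $0$; (b) the identity $\{\tilde Z(x_i)\le z_i\}=A_{G^{-1}(z_i)}(x_i)$ holds only up to the null event $\{\tilde Z(x_i)=z_i\}$ (the infimum need not be attained pathwise), so either work with strict inequalities or note that $F$ is continuous on $(0,+\infty)$; (c) the extinction regime should be checked explicitly, namely that on $\{X_t(x)\to0\}$ one has $A_\mu(x)$ for every $\mu\in(0,\rho)$ (since $v_{-t}(\mu)X_t(x)\to+\infty$ would force $X_t(x)\to+\infty$), so that $\tilde Z(x)=0$ there and the truncation at $\rho$ makes $e^{-t}G(1/X_t(x)\wedge\rho)\to G(\rho)=0$, matching the possible atom of $F$ at $0$ when $\rho<+\infty$.
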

\begin{example}\label{example1}
Consider the Neveu's mechanism $\Psi(u)=u\log u$ for which $\rho=1$. For all $t\in \mathbb{R}$, $v_{t}(\lambda)=\lambda^{e^{-t}}=e^{-\log \left(\frac{1}{\lambda}\right) e^{-t}}$.
Fix $\lambda_0= \frac{1}{e}$, one has $G(z)=\log(1/z)$ and the process $(\tilde{Z}(x), x\geq 0)$ is a positive extremal-$F$ process with for $z\geq 0$, $F(z)=e^{-e^{-z}}$ and for $z<0$, $F(z)=0$. As $F(0)>0$, the state $0$ is not instantaneous. 
\end{example}
\begin{remark} By applying the branching property at time $t\geq 0$ in Theorem \ref{main11}, one can see that for all $x\geq 0$, $\tilde{Z}(x)\overset{d}{=}e^{-t}\tilde{Z}'(X_t(x))$, where $(\tilde{Z}'(x), x\geq 0)$ has the same distribution as $(\tilde{Z}(x), x\geq 0)$ and is independent of $(X_s(x): 0\leq s\leq t, x\geq0)$. This result was observed by Cohn and Pakes in \cite{MR0488337} for Galton-Watson processes with infinite mean. The max-infinite divisibility of the process $(\tilde{Z}(x),x\geq 0)$ ensures that the random variable $\tilde{Z}(x)$ has the same law as the maximum of $n$ independent copies of $\tilde{Z}(x/n)$: \[\tilde{Z}(x)\overset{d}{=}\max(\tilde{Z}^{1}(x/n),...,\tilde{Z}^ {n}(x/n)).\]
In a loose sense, the infinite mean of the process transforms the sum into a maximum.
\end{remark}
We start the proof of Theorem \ref{main11} by two lemmas, the first one shows notably the slow variation of $G$ at $0$.
\begin{lemma}\label{slowvariation} 
If $\Psi'(0+)=-\infty$, and $\int_{0}\frac{\ddr u}{\Psi(u)}=-\infty$, the map $G:y\mapsto \exp\left(-\int_{y}^{\lambda_0}\frac{\ddr u}{\Psi(u)}\right)$ is continuous, non-increasing, goes from $[0,\rho]$ to $[0, +\infty]$ and is slowly varying at $0$. Moreover, for all $y\in (0,\rho)$, $G'(y)=\frac{G(y)}{\Psi(y)}$ with $G(\lambda_0)=1$ and
$G^{-1}(z)=v_{\log \left(\frac{1}{z}\right)}(\lambda_0)$ for $z \in [0,+\infty]$.
\end{lemma}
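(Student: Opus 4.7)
The plan is to verify each assertion directly from the definition of $G$ and the defining equation (\ref{cumulantintegral}) for $v_t$. First I record that, since $\rho$ is the largest root of the convex function $\Psi$ with $\Psi(0)=0$, one has $\Psi<0$ on $(0,\rho)$; hence $1/\Psi$ is continuous on $(0,\rho)$, the map $G$ is $C^1$ with $G'(y)=G(y)/\Psi(y)<0$, so $G$ is strictly decreasing on $(0,\rho)$, and $G(\lambda_0)=\exp(0)=1$ is immediate.

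For the boundary behavior I treat the two endpoints separately. The hypothesis $\int_0\ddr u/\Psi(u)=-\infty$ gives $\int_y^{\lambda_0}\ddr u/\Psi(u)\to-\infty$ as $y\to 0^+$, so $G(y)\to+\infty$. At the right endpoint I split on $\rho<+\infty$ versus $\rho=+\infty$: if $\rho<+\infty$, convexity of $\Psi$ with zeros at $0$ and $\rho$ forces $\Psi'(\rho)>0$, so $\Psi(u)\sim\Psi'(\rho)(u-\rho)$ near $\rho^-$ and $\int^\rho\ddr u/\Psi(u)=-\infty$; if $\rho=+\infty$, Theorem \ref{Grey}(i) identifies $-\Psi$ with a subordinator Laplace exponent, whose growth at infinity is at most linear, so again $\int^{+\infty}\ddr u/|\Psi(u)|=+\infty$. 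Either way $G$ extends to a continuous bijection $[0,\rho]\to[0,+\infty]$.

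For the slow variation of $G$ at $0$, I rewrite, for any $c>0$ and $y$ sufficiently small,
\[
\log G(cy)-\log G(y)=\int_y^{cy}\frac{\ddr u}{\Psi(u)}=\int_y^{cy}\frac{u}{\Psi(u)}\,\frac{\ddr u}{u}.
\]
The hypothesis $\Psi'(0+)=-\infty$ is precisely $u/\Psi(u)\to 0$ as $u\to 0^+$; so for any $\varepsilon>0$, the integrand is dominated by $\varepsilon/u$ in absolute value once $y$ is small enough, and integration yields $|\log G(cy)-\log G(y)|\leq\varepsilon|\log c|$. Letting $\varepsilon\to 0$ gives $G(cy)/G(y)\to 1$, which is the slow variation property.

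Finally, the inversion formula is a direct rewriting of (\ref{cumulantintegral}): from $\int_{v_t(\lambda_0)}^{\lambda_0}\ddr u/\Psi(u)=t$ one obtains $G(v_t(\lambda_0))=e^{-t}$, hence $G^{-1}(e^{-t})=v_t(\lambda_0)$; setting $z=e^{-t}$ gives $G^{-1}(z)=v_{\log(1/z)}(\lambda_0)$ on $(0,+\infty)$, and the boundary values follow by monotone extension. The only real obstacle is the endpoint analysis when $\rho=+\infty$, where one must invoke the subordinator structure of $-\Psi$ to exclude super-linear growth of $|\Psi|$ at infinity; everything else is bookkeeping and differentiation under the integral sign.
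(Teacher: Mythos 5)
Your proposal is correct and follows essentially the same route as the paper: the slow variation at $0$ is obtained from exactly the same bound (namely $\Psi(u)/u\to-\infty$ forces $\int_y^{cy}\ddr u/|\Psi(u)|\to 0$), and the inversion formula is read off from (\ref{cumulantintegral}) (with the extension to $z>1$ supplied by (\ref{v-})) just as in the paper. The only difference is that you spell out why $G(y)\to 0$ as $y\to\rho$ — via $\Psi'(\rho)>0$ when $\rho<+\infty$ and the at-most-linear growth of the subordinator exponent $-\Psi$ when $\rho=+\infty$ — a point the paper's proof simply asserts.
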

\begin{proof} 
Since $\Psi$ is nonpositive on $(0, \rho)$, and $\int_{\rho}^{\lambda_0}\frac{\ddr u}{\Psi(u)}=+\infty$ then $G(y)\underset{y\rightarrow \rho}{\longrightarrow} 0$.  Moreover since $\int_{0}\frac{\ddr u}{\Psi(u)}=-\infty$, then $G(y)\underset{y\rightarrow 0}{\longrightarrow} +\infty$. By definition $\int_{G^{-1}(z)}^{\lambda_0}\frac{\ddr u}{\Psi(u)}=\log\left(\frac{1}{z}\right)$ and by (\ref{cumulantintegral}) $G^{-1}(z)=v_{\log\left(\frac{1}{z}\right)}(\lambda_0)$.
By assumption $\Psi'(0+)=-\infty$, then for any fixed $\theta>0$, there exists  $|b|$ arbitrarily large, such that for a small enough $y$ 
$$\left \lvert\int_y^{\theta y}\frac{\ddr u}{|\Psi(u)|}\right \lvert \leq \left\lvert \int_{y}^{\theta y}\frac{\ddr u}{|b|u}\right\lvert=\frac{|\log(\theta)|}{|b|}$$ this tends to $0$ as $b\rightarrow +\infty $. Therefore $\underset{y\rightarrow 0}{\lim}\int_{y}^{\theta y}\frac{\ddr u}{\Psi(u)}=0$ and $G$ is slowly varying at $0$. \end{proof}
\begin{lemma}\label{WW}
Let $x>0$ and $\lambda\in (0,\bar{v})$. The limit $W^{\lambda}(x):=\underset{t\rightarrow +\infty}{\lim} v_{-t}(\lambda)X_{t}(x)$ exists almost-surely in $\mathbb{R}_+\cup\{+\infty\}$. Moreover, for all $x\geq 0$ $$\P(W^{\lambda}(x)=0)=1-\P(W^{\lambda}(x)=+\infty)=\exp\left(-x\lambda\right).$$
\end{lemma}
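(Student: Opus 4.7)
The existence of $W^{\lambda}(x)$ follows immediately from Grey's martingale (Lemma~\ref{martingale}): the process $M^{\lambda}_t(x) = \exp(-v_{-t}(\lambda)X_t(x))$ is a nonnegative martingale bounded by $1$, hence converges almost-surely to a $[0,1]$-valued random limit; taking the negative logarithm yields the almost-sure existence of $W^{\lambda}(x) \in [0,+\infty]$.

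The heart of the proof is to compute the Laplace transform of $W^{\lambda}(x)$. For any $\theta > 0$, bounded convergence together with the cumulant identity (\ref{cumulant}) gives
$$\mathbb{E}\bigl[e^{-\theta W^{\lambda}(x)}\bigr] = \lim_{t\to+\infty}\mathbb{E}\bigl[e^{-\theta v_{-t}(\lambda)X_t(x)}\bigr] = \lim_{t\to+\infty} e^{-x\, v_t(\theta v_{-t}(\lambda))}.$$
The task therefore reduces to proving that $v_t(\theta v_{-t}(\lambda)) \to \lambda$ as $t \to +\infty$, for every $\theta > 0$. To do this I trap $\theta v_{-t}(\lambda)$ between two nearby $v_{-t}$-values: fix $\mu',\mu$ with $0 < \mu' < \lambda < \mu < \rho$. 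Since $\Psi$ is negative on $(0,\rho)$, the integrals $\int_{\lambda}^{\mu}\tfrac{du}{\Psi(u)}$ and $\int_{\lambda}^{\mu'}\tfrac{du}{\Psi(u)}$ have opposite signs. Combined with $\Psi'(0+) = -\infty$, the limit (\ref{vsup}) of Lemma~\ref{v} yields
$$\frac{v_{-t}(\mu)}{v_{-t}(\lambda)} \underset{t\to+\infty}{\longrightarrow} +\infty, \qquad \frac{v_{-t}(\mu')}{v_{-t}(\lambda)} \underset{t\to+\infty}{\longrightarrow} 0.$$
Hence $v_{-t}(\mu') < \theta v_{-t}(\lambda) < v_{-t}(\mu)$ for $t$ large, and applying the strictly increasing map $v_t$ (so that $v_t \circ v_{-t}$ is the identity) gives $\mu' < v_t(\theta v_{-t}(\lambda)) < \mu$. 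Letting $\mu' \uparrow \lambda$ and $\mu \downarrow \lambda$ establishes the desired convergence.

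This shows $\mathbb{E}[e^{-\theta W^{\lambda}(x)}] = e^{-x\lambda}$ for every $\theta > 0$. Since this Laplace transform is constant in $\theta$, $W^{\lambda}(x)$ must be supported on $\{0,+\infty\}$: letting $\theta \to +\infty$ gives $\mathbb{P}(W^{\lambda}(x)=0) = e^{-x\lambda}$ by dominated convergence, while letting $\theta \downarrow 0$ gives $\mathbb{P}(W^{\lambda}(x)<+\infty) = e^{-x\lambda}$ by monotone convergence; subtracting yields $\mathbb{P}(0 < W^{\lambda}(x) < +\infty) = 0$, whence $\mathbb{P}(W^{\lambda}(x)=+\infty) = 1 - e^{-x\lambda}$.

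The main obstacle is the trapping step $v_t(\theta v_{-t}(\lambda)) \to \lambda$: one must both pick $\mu,\mu'$ strictly inside $(0,\rho)$ so that (\ref{vsup}) applies, and use essentially that $\Psi'(0+)=-\infty$ forces the ratio $v_{-t}(\mu)/v_{-t}(\lambda)$ to blow up or vanish according to the sign of $\int_\lambda^\mu du/\Psi(u)$. This is precisely the infinite-mean feature producing the degenerate $\{0,+\infty\}$-valued limit here, in sharp contrast to the non-trivial subordinator limit of Proposition~\ref{finite} in the finite-mean case; the dichotomy then prepares the ground for the extremal-process limit of Theorem~\ref{main11} via the non-linear renormalization through $G$.
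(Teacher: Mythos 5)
Your proof is correct and follows the same overall architecture as the paper's: Grey's martingale (Lemma~\ref{martingale}) with the martingale convergence theorem for almost-sure existence, the Laplace-transform computation reducing everything to $\lim_{t\to+\infty}v_t(\theta v_{-t}(\lambda))=\lambda$ for all $\theta>0$, and then the observation that a Laplace transform constant in $\theta$ forces a $\{0,+\infty\}$-valued limit. The one place you genuinely deviate is in establishing $v_t(\theta v_{-t}(\lambda))\to\lambda$. The paper works directly with the telescoping identity (\ref{equav}), namely $\int_{\lambda}^{v_t(\theta v_{-t}(\lambda))}\frac{\ddr z}{\Psi(z)}=\int_{v_{-t}(\lambda)}^{\theta v_{-t}(\lambda)}\frac{\ddr z}{\Psi(z)}$, and bounds the right-hand side by $|\log\theta|/b$ for arbitrarily large $b$ since $\Psi'(0+)=-\infty$ gives $|\Psi(z)|\geq bz$ near $0$ (the same estimate that proves the slow variation of $G$ in Lemma~\ref{slowvariation}). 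You instead trap $\theta v_{-t}(\lambda)$ between $v_{-t}(\mu')$ and $v_{-t}(\mu)$ for $\mu'<\lambda<\mu$ in $(0,\rho)$, invoking (\ref{vsup}) to get that $v_{-t}(\mu)/v_{-t}(\lambda)\to+\infty$ and $v_{-t}(\mu')/v_{-t}(\lambda)\to 0$ under $\Psi'(0+)=-\infty$; then applying the strictly increasing $v_t$ and shrinking $[\mu',\mu]$ to $\{\lambda\}$ gives the result. Both arguments exploit $\Psi'(0+)=-\infty$ in the same essential way — your trapping repackages the estimate through Lemma~\ref{v}, while the paper's is slightly more self-contained and reuses the exact computation already set up for $G$. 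Either route is valid; you also spell out the dominated/monotone convergence step giving $\mathbb{P}(W^{\lambda}(x)=0)=e^{-x\lambda}$ and $\mathbb{P}(W^{\lambda}(x)=+\infty)=1-e^{-x\lambda}$, which the paper leaves implicit.
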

\begin{proof} Lemma \ref{martingale} and the martingale convergence theorem applied to $(M_t^{\lambda})_{t\geq 0}$ ensure that $v_{-t}(\lambda)X_{t}(x)$ converges almost-surely as $t$ goes to infinity towards a random variable $W^{\lambda}(x)$ with values in $[0,+\infty]$. Let $0\leq \lambda<\bar{v}$ and $\theta\geq 0$. One has for all $x\geq 0$
\begin{equation}\label{laplaceW} \mathbb{E}[e^{-\theta v_{-t}(\lambda)X_{t}(x)}]=\exp\left(-xv_{t}(\theta v_{-t}(\lambda))\right).
\end{equation}
By Lemma \ref{v}, $v_{-t}(\lambda)\underset{t\rightarrow +\infty}\longrightarrow 0$. For all $\theta> 0$, and $t$ such that $v_{-t}(\lambda), \theta v_{-t}(\lambda)\in (0,\rho)$;
we have by (\ref{cumulantintegral}) and Lemma \ref{v} 
\begin{align}\label{equav}
\int_{\lambda}^{v_{t}(\theta v_{-t}(\lambda))}\frac{\ddr z}{\Psi(z)}&=\int_{\lambda}^{v_{-t}(\lambda)}\frac{\ddr z}{\Psi(z)}+\int_{v_{-t}(\lambda)}^{\theta v_{-t}(\lambda)}\frac{\ddr z}{\Psi(z)}+\int_{\theta v_{-t}(\lambda)}^{v_t(\theta v_{-t}(\lambda))}\frac{\ddr z}{\Psi(z)} \nonumber\\
&=\int_{v_{-t}(\lambda)}^{\theta v_{-t}(\lambda)}\frac{\ddr z}{\Psi(z)}.
\end{align}
Fix any positive constant $\theta\neq1$. For any $b>0$, there exists a large enough $t$ such that $|\Psi(z)|\geq |b|z$ for  all $z\in((\theta\wedge1) v_t(\lambda), (\theta\vee1)v_t(\lambda))$. Therefore by Lemma \ref{slowvariation} and (\ref{equav}),
\begin{equation*}\lim_{t\rightarrow +\infty} \left| \int_{\lambda}^{v_t(\theta v_{-t}(\lambda))}\frac{\ddr z}{\Psi(z)} \right| \leq \lim_{t\rightarrow +\infty} \int_{v_{-t}(\lambda)}^{\theta v_{-t}(\lambda)}\frac{\ddr z}{|\Psi(z)|}=0.\end{equation*}
Then $\lim_{t\rightarrow +\infty} \int_{\lambda}^{v_t(\theta v_{-t}(\lambda))}\frac{\ddr z}{\Psi(z)}=0$ and thus for all $\theta>0$
\begin{equation}\label{theta}
\lim_{t\to+\infty}v_{t}(\theta v_{-t}(\lambda))=\lambda.
\end{equation}
The limit in (\ref{laplaceW}) as $t$ tends to $+\infty$ equals $e^{-x\lambda}$ and does not depend on $\theta$. The random variable $W^{\lambda}(x)$ is thus equal to $0$ or $+\infty$ with probability $1$. \end{proof}
\begin{proof}[Proof of Theorem \ref{main11}] The arguments for the almost-sure convergence are adapted from those on pages 711-712 in \cite{Grey2}. Fix $x>0$. By Lemma \ref{martingale} and Lemma \ref{WW}, there exists $\Omega_0\in\mathcal{F}$ such that $\mathbb{P}(\Omega_0)=1$ and on $\Omega_0$, for any $q\in(0,\rho)\cap\mathbb{Q}$, $v_{-t}(q)X_{t}(x)\underset{t\rightarrow +\infty}\longrightarrow W^q(x)$ with $W^q(x)$ taking values in $\{0,+\infty\}$. Since $q\mapsto v_{-t}(q)$ is increasing then by definition, $W^{q}_x\leq W^{q'}_x$ if $q\leq q'$.  Then $\{W^q(x): q\in(0,\rho)\cap\mathbb{Q}\}$ steps up from $0$ to $+\infty$ at some
random threshold $\Lambda_x$ and is otherwise constant. Define $\Lambda_x:=\inf\{q\in (0,\rho)\cap\mathbb{Q}: W^{q}(x)=+\infty\}\in\mathbb{R}_+$.
On $\Omega_0$, for any $\lambda\in[0,\rho)$,
\beqnn
\{\Lambda_x\leq\lambda\}=\lim_{q\in\mathbb{Q}, q\downarrow\lambda}\{W^q(x)=+\infty\} \quad \mbox{and}\quad \{\Lambda_x=\rho\}
=\lim_{q\in\mathbb{Q}, q\uparrow\rho}\{W^q(x)=0\}.
\eeqnn
Then $\Lambda_x$ is a random variable. By Lemma \ref{WW}, we have $\mathbb{P}(\Lambda_x\leq\lambda)=1-e^{-x\lambda}$ for $\lambda\in[0,\rho)$ and $\mathbb{P}(\Lambda_x=\rho)=e^{-x\rho}$, which implies that 
$\{\Lambda_x=\rho\}=\{X_{t}(x)\underset{t\rightarrow +\infty}{\longrightarrow} 0\}$ a.s.  We then work deterministically on $\Omega_0$ to show that for any $\lambda\in (0,\rho)$, 
\beqlb\label{0-infty}
v_{-t}(\lambda)X_{t}(x)\underset{t\rightarrow +\infty}\longrightarrow  W^{\lambda}(x)=\begin{cases} 0 & \text{ if } \Lambda_x>\lambda\\
+\infty &\text{ if }\Lambda_x<\lambda.
\end{cases}
\eeqlb
Let $\lambda\in(0,\rho)$. If $\Lambda_x>\lambda$, there exists some $q'\in\mathbb{Q}$ such that $\Lambda_x>q'>\lambda$. Since $v_{-t}(q')X_t(x)\underset{t\rightarrow +\infty}\longrightarrow 0$ and $v_{-t}(q')X_t(x)\geq v_{-t}(\lambda)X_t(x)$ therefore $v_{-t}(\lambda)X_t(x)\underset{t\rightarrow +\infty}\longrightarrow 0$. If $\Lambda_x<\lambda$, there exists some $q''\in\mathbb{Q}$ such that  $\Lambda_x<q''<\lambda$. Since $v_{-t}(q'')X_t(x)\leq v_{-t}(\lambda)X_t(x)$ and $v_{-t}(q'')X_t(x)\underset{t\rightarrow +\infty}\longrightarrow +\infty$, therefore $v_{-t}(\lambda)X_t(x)\underset{t\rightarrow +\infty}\longrightarrow +\infty$. 

Assume $\Lambda_x<\rho$. Choose $\lambda'$ and $\lambda''$ such that $\lambda',\lambda''\in (0,\rho)\cap\mathbb{Q}$ and $\lambda'<\Lambda_x<\lambda''$, by definition if $t$ is large enough,  \begin{center}
$v_{-t}(\lambda'')X_{t}(x)\geq 1$ and $v_{-t}(\lambda')X_{t}(x)\leq 1,$
\end{center} thus
$$G(v_{-t}(\lambda'))\geq G(1/X_{t}(x))\geq G(v_{-t}(\lambda'')).$$
Recall (\ref{cumulantintegral}) and (\ref{v-}), this yields $G(v_{-t}(\lambda'))=e^{-\int_{\lambda'}^{\lambda_0}\frac{\ddr x}{\Psi(x)}}e^{t}=G(\lambda')e^{t}$, and then
$$G(\lambda')\geq e^{-t}G(1/X_{t}(x))\geq G(\lambda'').$$
Since $\lambda'$ and $\lambda''$ are arbitrarily close to $\Lambda_x$, and $G$ is continuous, we get $$e^{-t}G\left(\frac{1}{X_{t}(x)}\right) \underset{t\rightarrow +\infty}\longrightarrow G(\Lambda_x)\quad \mathbb{P}\text{-almost surely on } \{\Lambda_x<\rho\}.$$
If $\Lambda_x=\rho$, $1/X_{t}(x)\underset{t\rightarrow +\infty}{\longrightarrow} +\infty$, and $$G\left(\frac{1}{X_{t}(x)}\wedge \rho\right)\underset{t\rightarrow +\infty}\longrightarrow G(\rho)=0\quad \mathbb{P}\text{-almost surely on } \{\Lambda_x=\rho\}.$$ Define 
\begin{equation}\label{defZ}
\tilde{Z}(x)=G(\Lambda_x).
\end{equation}
The one-dimensional law of $\tilde{Z}(x)$ follows readily. In order to avoid cumbersome notations, we only show that $(\tilde{Z}(x), x\geq 0)$ satisfies (\ref{extremalprocess}) for the two-dimensional marginals. Let $z_1, z_2 \in \mathbb{R}_+$. By (\ref{0-infty}) the events $\{\tilde{Z}(x_1)< z_1, \tilde{Z}(x_2)< z_2\}$ and $\{W^{G^{-1}(z_{1})}(x_{1})=0, W^{G^{-1}(z_{2})}(x_{2})=0\}$ are identical. Let $\lambda_1=G^{-1}(z_1)$ and $\lambda_2=G^{-1}(z_2)$, then
\begin{align*}
&\mathbb{P}(W^{\lambda_1}(x_1)=0, W^{\lambda_2}(x_{2})=0)\\&= \mathbb{E}[e^{-W^{\lambda_1}(x_{1})-W^{\lambda_2}(x_2)}]\\
&=\underset{t\rightarrow +\infty}{\lim}\mathbb{E}\left[\exp\left(-v_{-t}(\lambda_1)X_{t}(x_1)-
v_{-t}(\lambda_2)X_{t}(x_{2})\right)\right]\\
&=\underset{t\rightarrow +\infty}{\lim}\mathbb{E}\left[\exp\left(-( v_{-t}(\lambda_1)+
v_{-t}(\lambda_2))X_{t}(x_1)-v_{-t}(\lambda_2)(X_{t}(x_{2})-X_{t}(x_{1}))\right)\right]\\
&=\underset{t\rightarrow +\infty}{\lim}\mathbb{E}[\exp(-(v_{-t}(\lambda_1)+ v_{-t}(\lambda_2))X_{t}(x_{1}))]\mathbb{E}[\exp(-v_{-t}(\lambda_2)(X_{t}(x_{2})-X_{t}(x_{1})))]\\
&=\underset{t\rightarrow +\infty}{\lim}\exp\left(-x_{1}v_{t}(v_{-t}(\lambda_1)+v_{-t}(\lambda_2))\right)\exp\left(-(x_2-x_1)v_{t}(v_{-t}(\lambda_2))\right).
\end{align*}
By definition $v_{t}(v_{-t}(\lambda_2))=\lambda_2$. With no loss of generality assume $\lambda_1< \lambda_2$, by Lemma \ref{v}, since $\Psi'(0+)=-\infty$, then $\frac{v_{-t}(\lambda_1)}{v_{-t}(\lambda_2)}\underset{t\rightarrow +\infty}{\longrightarrow} 0$. Fix any $\theta>1$. For $t$ large enough
$$v_{t}(v_{-t}(\lambda_2))\leq v_{t}(v_{-t}(\lambda_1)+v_{-t}(\lambda_2))\leq v_{-t}(\theta v_{-t}(\lambda_2)).$$
Recall (\ref{theta}), since $\lim_{t\to+\infty}v_{t}(\theta v_{-t}(\lambda))=\lambda$, therefore
$v_{t}(v_{-t}(\lambda_1)+v_{-t}(\lambda_2))\underset{t\rightarrow +\infty}{\longrightarrow} \lambda_1 \vee \lambda_2.$ Thus
\begin{align*}
\mathbb{P}(\tilde{Z}(x_1)< z_1, \tilde{Z}(x_2)< z_2)&=e^{-x_{1}G^{-1}(z_1)\vee G^{-1}(z_2)}e^{-(x_2-x_1)G^{-1}(z_2)}\\
&=e^{-x_{1}G^{-1}(z_1\wedge z_2)}e^{-(x_2-x_1)G^{-1}(z_2)}.
\end{align*}
\end{proof}
\begin{remark} An alternative route to see that $(\tilde{Z}(x), x\geq 0)$ is an extremal-$F$ process is to verify (\ref{maxid}) instead of (\ref{extremalprocess}). By applying Theorem \ref{main11} to the CSBP $(X_t(x+y)-X_t(x), t\geq 0)$, we get that $\underset{t\rightarrow +\infty}\lim e^{-t}G\left(\frac{1}{X_t(x+y)-X_t(x)}\right)=:\tilde{Z}(x,x+y)$ exists almost-surely, has the same law as $\tilde{Z}_y$ and is independent of $\tilde{Z}(x)$. It is readily checked that $\tilde{Z}(x+y)$ and $\tilde{Z}(x) \vee \tilde{Z}(x,x+y)$ have the same law. Since $G$ is non-increasing, one has $\tilde{Z}(x+y)\geq \tilde{Z}(x) \vee \tilde{Z}(x,x+y)$ a.s. Therefore $\tilde{Z}(x+y)=\tilde{Z}(x) \vee \tilde{Z}(x,x+y)$ a.s. 
\end{remark}
\begin{Prop}\label{superexponential1}
If $\Psi'(0+)=-\infty$ and there are $\lambda>0$ and $\alpha>0$ such that 
$\left\lvert\int_{0}^{\lambda}\left(\frac{1}{\Psi(u)}-\frac{1}{\alpha u \log u}\right)\ddr u \right\lvert<+\infty,$
then $G(1/y)\underset{y\rightarrow +\infty}\sim k_{\lambda}\log(y)^{1/\alpha},$ with $k_{\lambda}$ a positive constant. Fix $x>0$, on the event $\{X_t(x)\underset{t\rightarrow +\infty}{\longrightarrow}+\infty\}$, \[\log X_t(x)\underset{t\rightarrow +\infty}{\sim} e^{\alpha t}k_{\lambda}^{-\alpha}\tilde{Z}(x)^{\alpha} \text{ a.s}.\]
\end{Prop}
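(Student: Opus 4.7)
The plan is to prove the proposition in two essentially independent steps: (a) extract the asymptotic behaviour of $G(1/y)$ near $+\infty$ from the integrability hypothesis, and (b) plug this asymptotic into the almost-sure convergence supplied by Theorem \ref{main11}.

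For step (a), I would fix the reference antiderivative $\int \frac{\ddr u}{\alpha u\log u}=\frac{1}{\alpha}\log|\log u|+C$ and write, for $y$ large enough that $1/y<\lambda<\lambda_0<\rho$,
\begin{equation*}
-\int_{1/y}^{\lambda_0}\frac{\ddr u}{\Psi(u)}=-\int_{1/y}^{\lambda}\left(\frac{1}{\Psi(u)}-\frac{1}{\alpha u\log u}\right)\ddr u-\int_{1/y}^{\lambda}\frac{\ddr u}{\alpha u\log u}-\int_{\lambda}^{\lambda_0}\frac{\ddr u}{\Psi(u)}.
\end{equation*}
By hypothesis, the first integral converges as $y\to+\infty$ to a finite constant; the last integral is a constant; and the middle one equals $\frac{1}{\alpha}\log\log y-\frac{1}{\alpha}\log|\log\lambda|$. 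Exponentiating yields $G(1/y)\sim k_\lambda(\log y)^{1/\alpha}$ with an explicit $k_\lambda>0$ depending on $\lambda,\lambda_0,\alpha$ and the constant of convergence. The only point requiring care is bookkeeping of signs ($\Psi(u)<0$ and $\log u<0$ on $(0,1)$), but this is routine once the antiderivative is written.

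For step (b), fix $x>0$ and work on the event $\{X_t(x)\to+\infty\}$. Theorem \ref{main11} gives $e^{-t}G(1/X_t(x)\wedge\rho)\to\tilde Z(x)$ a.s., and since $1/X_t(x)<\rho$ eventually, the truncation by $\rho$ can be dropped for large $t$. Moreover, by the construction $\tilde Z(x)=G(\Lambda_x)$ and the identification $\{\Lambda_x=\rho\}=\{X_t(x)\to 0\}$ a.s.\ established in the proof of Theorem \ref{main11}, we have $\tilde Z(x)\in(0,+\infty)$ almost surely on $\{X_t(x)\to+\infty\}$. Hence $G(1/X_t(x))\sim e^{t}\tilde Z(x)$ almost surely on this event, and substituting the asymptotic from step (a) gives $k_\lambda(\log X_t(x))^{1/\alpha}\sim e^{t}\tilde Z(x)$, equivalently $\log X_t(x)\sim e^{\alpha t}k_\lambda^{-\alpha}\tilde Z(x)^\alpha$, which is the claim.

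I do not anticipate a real obstacle: both ingredients are already in place. The only subtlety is ensuring that the asymptotic $G(1/y)\sim k_\lambda(\log y)^{1/\alpha}$ can be composed with the random sequence $X_t(x)\to+\infty$; this is legitimate because the asymptotic is deterministic and $\tilde Z(x)$ is bounded and strictly positive on the event of interest, so $G(1/X_t(x))$ is driven to $+\infty$ and we may apply the asymptotic pointwise in $\omega$. Identifying $k_\lambda$ explicitly, if desired, amounts to collecting the constants $\exp\!\left(-\int_0^{\lambda}\!\left(\tfrac{1}{\Psi(u)}-\tfrac{1}{\alpha u\log u}\right)\ddr u-\int_\lambda^{\lambda_0}\tfrac{\ddr u}{\Psi(u)}-\tfrac{1}{\alpha}\log|\log\lambda|\right)$.
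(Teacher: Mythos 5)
Your proposal is correct and follows essentially the same route as the paper: decompose $-\int_{1/y}^{\lambda_0}\frac{\ddr u}{\Psi(u)}$ using the comparison term $\frac{1}{\alpha u\log u}$, whose antiderivative produces the $\frac{1}{\alpha}\log\log y$ contribution, exponentiate to get $G(1/y)\sim k_\lambda(\log y)^{1/\alpha}$, and then compose with the almost-sure convergence of Theorem \ref{main11}. Your added care (keeping $\lambda$ and $\lambda_0$ distinct, dropping the $\wedge\rho$ truncation for large $t$, and noting $\tilde Z(x)\in(0,+\infty)$ on $\{X_t(x)\to+\infty\}$ via $\tilde Z(x)=G(\Lambda_x)$) only makes explicit what the paper's shorter proof leaves implicit.
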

\begin{proof} By definition of $G$, \begin{align*}
\frac{G(1/y)}{(\log y)^{1/\alpha}}&=\exp\left(-\int_{\frac{1}{y}}^{\lambda}\frac{\ddr u}{\Psi(u)}-\frac{1}{\alpha}\log \log y\right)\\
&=\exp\left(-\int_{\frac{1}{y}}^{\lambda}\left(\frac{1}{\Psi(u)}-\frac{1}{\alpha u \log u}\right)\ddr u-\frac{1}{\alpha}\log \log \frac{1}{\lambda}\right)\underset{y\rightarrow +\infty}{\longrightarrow} k_{\lambda}\in (0,+\infty).
\end{align*}
By Theorem \ref{main11}, $e^{-t}(\log X_t(x))^{1/\alpha}\underset{t\rightarrow +\infty}{\longrightarrow} k_{\lambda}^{-1}\tilde{Z}(x)$ a.s.
\end{proof}

The following proposition shows how to associate an extremal process to a flow of explosive CSBPs through the explosion times. 
\begin{Prop}[Theorem 0.3-i) in \cite{DuqLab} for $\zeta=\zeta_\infty$]\label{explosiveflow} Consider a flow of CSBPs$(\Psi)$, $(X_t(x), x\geq 0, t\geq 0)$, with $\Psi$ such that $\int_{0}\frac{\ddr u}{|\Psi(u)|}<+\infty$. Define $\xi_0=+\infty$ and $\xi_x:=\inf\{t>0; X_t(x)=+\infty\}.$ The process $(Z(x), x\geq 0):=(1/\xi_{x}, x\geq 0)$ is an extremal-$F$ process with $F(z)=\exp(-\underline{v}_{\frac{1}{z}})$. For all $i\in I$, set $Z_i:=1/\xi_i$ with $\xi_i:=\inf\{t\geq 0; X^{i}_t=+\infty\}$ in the infinite variation case and $\xi_i:=t_i+\inf\{t\geq t_i; X^{i}_{t-t_i}=+\infty\}$ in the finite variation case. The point process $\mathcal{M}:=\sum_{i\in I}\delta_{(x_i,Z_i)}$ is a Poisson point process with intensity $\ddr x\otimes \mu(\ddr z)$ with $\bar{\mu}(z)=\underline{v}_{\frac{1}{z}}$ and almost-surely for all $x\geq 0$, $Z(x)=\sup_{x_{i}\leq x} Z_i$. Moreover 
$$\mathcal{S}:=\left\{x>0, \exists t>0; \Delta X_t(x)=+\infty \text{ and } X_{t}(x-)<+\infty\right\}=\{x>0; \Delta Z(x)>0\}.$$
\end{Prop}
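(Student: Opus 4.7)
The plan is to derive the four assertions of Proposition~\ref{explosiveflow} from Theorem~\ref{Grey}-iii), the flow property, and the Poisson construction of Section~\ref{genealogy}.

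I would first identify the one-dimensional law of $Z(x)$ and the extremal-$F$ structure. Theorem~\ref{Grey}-iii) gives $\mathbb{P}(\xi_x \leq t) = \mathbb{P}(X_t(x)=+\infty) = 1 - e^{-x\underline{v}_t}$, hence $\mathbb{P}(Z(x) \leq z) = e^{-x\underline{v}_{1/z}} = F(z)^x$. By flow property ii), $(X_t(x+y)-X_t(x), t\geq 0)$ is a CSBP$(\Psi)$ started from $y$, independent of $(X_t(x), t\geq 0)$. Since $+\infty$ is absorbing and any sum containing $+\infty$ equals $+\infty$, the explosion time $\tilde{\xi}_y$ of this increment satisfies $\xi_{x+y} = \xi_x \wedge \tilde{\xi}_y$, so $Z(x+y) = Z(x) \vee (1/\tilde{\xi}_y)$ with $1/\tilde{\xi}_y$ independent of $(Z(u), 0\leq u\leq x)$ and distributed as $Z(y)$. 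Together with the marginal computation, this is exactly~(\ref{maxid}), so $(Z(x), x\geq 0)$ is an extremal-$F$ process.

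Next, the Poisson construction (\ref{poissoninfinitevariation})--(\ref{poissonfinitevariation}) realizes $X_t(x)$ as a sum of contributions $X^i_t$ (respectively $X^i_{t-t_i}\mathbf{1}_{\{t_i\leq t\}}$) over $x_i \leq x$, plus the deterministic drift $e^{-\textbf{d}t}x$ in the finite variation case, which never explodes. A routine Poissonian argument (a.s.\ summability whenever each term is finite) yields $\xi_x = \inf_{x_i\leq x} \xi_i$, hence $Z(x) = \sup_{x_i\leq x} Z_i$. The intensity of $\mathcal{M}$ is then obtained by the mapping theorem. In the infinite variation case, the push-forward of the PPP $\sum_i \delta_{(x_i, X^i)}$ by $X\mapsto 1/\xi(X)$ has intensity $dx\otimes\mu$ with $\bar{\mu}(z) = N_\Psi(X_{1/z}=+\infty)$; applying~(\ref{clustermeasure}) to $F(Y)=\mathbf{1}_{\{Y_0=+\infty\}}$ gives $\bar{\mu}(z) = \ell_{1/z}(\{+\infty\})$, which equals $\underline{v}_{1/z}$ by monotone convergence in $v_t(\lambda) = \int \ell_t(dx)(1-e^{-\lambda x})$ as $\lambda\downarrow 0$. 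In the finite variation case the analogous push-forward yields
\[
\bar{\mu}(z) \,=\, \int_0^{1/z} e^{-\textbf{d}t}\,dt\int_0^{+\infty} \pi(dr)\bigl(1-e^{-r\underline{v}_{1/z-t}}\bigr),
\]
and the ODE $\underline{v}_t' = -\Psi(\underline{v}_t)$ with $\underline{v}_0=0$, using the expression~(\ref{finitevariationpsi}) for $\Psi$, gives $\frac{d}{dt}(e^{\textbf{d}t}\underline{v}_t) = e^{\textbf{d}t}\int\pi(dr)(1-e^{-r\underline{v}_t})$, whose integration identifies the right-hand side above with $\underline{v}_{1/z}$.

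Finally, I would characterize $\mathcal{S}$. By the previous step, $\Delta Z(x)>0$ if and only if $x$ is some atom $x_i$ with $Z_i > Z(x_i-)$, equivalently $\xi_i < \xi_{x_i-}$. Under this condition any $t\in(\xi_i, \xi_{x_i-})$ is a nonempty interval and satisfies $\Delta X_t(x_i)=+\infty$ and $X_t(x_i-)<+\infty$, so $x_i\in \mathcal{S}$; conversely, if $x\in\mathcal{S}$, any $t$ witnessing the definition forces $x = x_i$ with $\xi_i\leq t < \xi_{x_i-}$, whence $\Delta Z(x_i)>0$. The main obstacle I foresee is the intensity calculation in the finite variation case: one has to recognize that the push-forward integral is exactly the integrated form of the ODE satisfied by $\underline{v}_t$. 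Every other step is a direct consequence of Theorem~\ref{Grey}-iii), the flow property ii), and standard Poisson calculus.
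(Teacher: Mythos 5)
Your proposal is correct and follows essentially the same route as the paper's proof: the one-dimensional law from Theorem \ref{Grey}-iii), the max-decomposition (\ref{maxid}) via the flow property, the identification $Z(x)=\sup_{x_i\leq x}Z_i$ from the Poisson construction, and the same intensity computations (your shortcut $\bar{\mu}(z)=\ell_{1/z}(\{+\infty\})=\underline{v}_{1/z}$ replaces the paper's limit $v_s(\underline{v}_{1/z})\rightarrow \underline{v}_{1/z}$ as $s\rightarrow 0$, and your finite-variation substitution is the paper's integration by parts in disguise). You also spell out the equivalence $\mathcal{S}=\{x>0;\Delta Z(x)>0\}$, which the paper leaves implicit; that argument is fine.
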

\begin{proof}  By Theorem \ref{Grey}-iii), $\mathbb{P}(\xi_x>1/z)=e^{-x\underline{v}_{\frac{1}{z}}}$ where $t\mapsto\underline{v}_t$ is the unique solution to 
$\frac{\ddr \underline{v}_{t}}{\ddr t}=-\Psi(\underline{v}_{t})$ and $\underline{v}_0=0$. Plainly, $\xi_{x+y}=\xi_{x}\wedge \xi_{x,x+y}$ with $\xi_{x,x+y}:=\inf\{t\geq 0; X_t(x+y)-X_t(x)=+\infty\}$. The random variable $\xi_{x,x+y}$ is independent of $(\xi_{u}, 0\leq u\leq x)$ and has the same law as $\xi_{y}$. Therefore, the process $(Z(x), x\geq 0)$ satisfies (\ref{maxid}) and is an extremal-$F$ process with $F(z)=e^{-\underline{v}_{\frac{1}{z}}}$. Assume $\Psi$ of finite variation, the intensity of $\mathcal{M}$ is $\ddr x\otimes \mu(\ddr z)$ where 
\begin{align*}
\bar{\mu}(z)&=\int_{0}^{+\infty}e^{-\textbf{d}t}\ddr t\int_{0}^{+\infty}\pi(\ddr r)\mathbb{P}^{\Psi}_r\left(\frac{1}{t+\xi}>z\right)\\
&=\int_{0}^{1/z}e^{-\textbf{d}t}\ddr t\int_{0}^{+\infty}\pi(\ddr r)\mathbb{P}^{\Psi}_r\left(\frac{1}{t+\xi}>z\right) \text{ since } \mathbb{P}^{\Psi}_r\left(\frac{1}{t+\xi}>z\right)=0 \text{ if } t>1/z\\
&=\int_{0}^{1/z}e^{-\textbf{d}t}\ddr t\int_{0}^{+\infty}\pi(\ddr r)\left(1-e^{-r\underline{v}_{1/z-t}}\right) \text{ by Theorem \ref{Grey}-iii)}\\
&=\int_{0}^{1/z}e^{-\textbf{d}t}\ddr t\left(-\Psi(\underline{v}_{1/z-t})+\textbf{d}\underline{v}_{1/z-t}\right) \text{ since } \Psi \text{ has the form (\ref{finitevariationpsi}})\\
&=\int_{0}^{1/z}e^{-\textbf{d}t}\ddr t\left(-\frac{\ddr \underline{v}_{1/z-t}}{\ddr t}+\textbf{d}\underline{v}_{1/z-t}\right)=\left[-e^{-\textbf{d}t}\underline{v}_{1/z-t}\right]^{t=1/z}_{t=0}=\underline{v}_{1/z}.
\end{align*}
By definition, $\xi_x= \underset{x_i\leq x}{\inf}\{t\geq 0; X^i_{(t-t_i)_+}=+\infty\}=\underset{x_i\leq x}{\inf} \xi_i$ and then $Z(x)=\underset{x_i\leq x}{\sup} Z_i$. In the infinite variation case, by Equation \ref{clustermeasure}, 
\begin{align*}
N_{\Psi}(Z>z; X_s>0)&=\int_{(0,+\infty]}\ell_{s}(\ddr x)\left(1-\mathbb{P}^{\Psi}_{x}(Z<z)\right)=\int_{(0,+\infty]}\ell_{s}(\ddr x)\left(1-\mathbb{P}^{\Psi}_{x}(\xi>1/z)\right) \\
&=\int_{(0,+\infty]}\ell_{s}(\ddr x)\left(1-e^{-x\underline{v}_{1/z}}\right)=v_{s}(\underline{v}_{1/z})\underset{s\rightarrow 0}{\longrightarrow}\underline{v}_{1/z}. 
\end{align*}
By definition, $\xi_x= \underset{x_i\leq x}{\inf}\{t\geq 0; X^i_t=+\infty\}=\underset{x_i\leq x}{\inf} \xi_i$ and then $Z(x)=\underset{x_i\leq x}{\sup} Z_i$.
\end{proof}
\begin{example} Let $\alpha\in (0,1)$ and $\Psi(u)=-c_{\alpha} u^{\alpha}$ with $c_\alpha=\frac{1}{1-\alpha}$. Consider $(X_t(x), t\geq 0, x\geq 0)$ a flow of CSBPs$(\Psi)$. By Theorem \ref{Grey}-iii), for all $x$, the process $(X_t(x), t\geq 0)$ is explosive. The explosion time $\xi_x$ of the process $(X_t(x), t\geq 0)$ has a Weibull law with parameter $\frac{1}{1-\alpha}$. By Proposition \ref{explosiveflow}, the process $(Z(x), x\geq 0)$ is an extremal-$F$ process with $F$ the probability distribution function of a Fr\'echet law with parameter $\frac{1}{1-\alpha}\in (1,+\infty)$, that is to say $F(z)=e^{-z^{-\frac{1}{1-\alpha}}}$ for all $z\geq0$. 
\end{example}
\subsection{Proof of Theorem \ref{main1}-i)}\label{proofmaini}
The arguments provided in the sequel could be simplified in the case $\rho<+\infty$ merely because only finitely many individuals in $[0,x]$ are prolific. We shall not distinguish the cases $\rho<+\infty$ and $\rho=+\infty$ and the arguments will hold also in the subcritical case with infinite variation.
\begin{lemma}\label{lemma} Suppose that  $(X_t, t\geq 0)$ and $(Y_t, t\geq 0)$ are two independent CSBPs$(\Psi)$ on the same probability space, satisfying the conditions of Theorem \ref{main11}, with initial value $X_0$ and $Y_0$ respectively.  Then
$$e^{-t}G\left(\frac{1}{X_t}\wedge \rho\right)\underset{t\rightarrow +\infty}\longrightarrow \Gamma_1, \quad e^{-t}G\left(\frac{1}{Y_t}\wedge \rho\right)\underset{t\rightarrow +\infty}\longrightarrow \Gamma_2 \text{ and } e^{-t}G\left(\frac{1}{X_t+Y_t}\wedge \rho\right)\underset{t\rightarrow +\infty}\longrightarrow \Gamma_1 \vee\Gamma_2 \text{ a.s.}$$
where $\Gamma_1$ and $\Gamma_2$ are independent such that $\Gamma_1=\Gamma_2$ if and only if $\Gamma_1=\Gamma_2=0$ and
$$\Gamma_1\overset{d}{=}\bar{Z}(X_0),\quad  \Gamma_2\overset{d}{=}\bar{Z}(Y_0) \text{ and }\Gamma_1 \vee \Gamma_2 \overset{d}{=}\bar{Z}(X_0+Y_0),$$ where $(\bar{Z}(x), x\geq0)$ is an extremal-$F$ process, independent of $X_0$ and $Y_0$. 
\end{lemma}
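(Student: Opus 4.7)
First I would apply Theorem \ref{main11} separately to the two independent CSBPs, which yields the almost sure convergences $e^{-t}G(1/X_t\wedge\rho)\to\Gamma_1$ and $e^{-t}G(1/Y_t\wedge\rho)\to\Gamma_2$, together with the identifications $\Gamma_1\overset{d}{=}\bar Z(X_0)$ and $\Gamma_2\overset{d}{=}\bar Z(Y_0)$ for an extremal-$F$ process $\bar Z$. Independence of $\Gamma_1$ and $\Gamma_2$ is then immediate from the independence of the two processes, since each $\Gamma_i$ is a measurable functional of the corresponding CSBP. For the sum, the branching property implies that $(X_t+Y_t, t\geq 0)$ is itself a CSBP$(\Psi)$ with initial value $X_0+Y_0$, so a third application of Theorem \ref{main11} gives $e^{-t}G(1/(X_t+Y_t)\wedge\rho)\to\Gamma_3$ a.s., with $\Gamma_3\overset{d}{=}\bar Z(X_0+Y_0)$.

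The main step is then to identify $\Gamma_3$ with $\Gamma_1\vee\Gamma_2$ pathwise. For this I would revisit the threshold representation used in the proof of Theorem \ref{main11}: on an almost sure event $\Omega_0$, for every rational $q\in(0,\rho)\cap\mathbb{Q}$ Lemma \ref{WW} provides the limits $W^q_X:=\lim_t v_{-t}(q)X_t\in\{0,+\infty\}$ and $W^q_Y:=\lim_t v_{-t}(q)Y_t\in\{0,+\infty\}$, so the thresholds $\Lambda^X:=\inf\{q\in(0,\rho)\cap\mathbb{Q}: W^q_X=+\infty\}$ and $\Lambda^Y$ are well defined. Using the identity $v_{-t}(q)(X_t+Y_t)=v_{-t}(q)X_t+v_{-t}(q)Y_t$ and the fact that the sum of two $\{0,+\infty\}$-valued quantities is $+\infty$ exactly when at least one is, the analogous threshold for $X+Y$ satisfies $\Lambda^{X+Y}=\Lambda^X\wedge\Lambda^Y$ on $\Omega_0$. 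Since $G$ is continuous and non-increasing by Lemma \ref{slowvariation}, this gives
\[
\Gamma_3=G(\Lambda^X\wedge\Lambda^Y)=G(\Lambda^X)\vee G(\Lambda^Y)=\Gamma_1\vee\Gamma_2\quad\text{a.s.}
\]

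It remains to establish the dichotomy $\{\Gamma_1=\Gamma_2\}=\{\Gamma_1=\Gamma_2=0\}$, for which it suffices to show $\mathbb{P}(\Gamma_1=\Gamma_2>0)=0$. Conditional on $X_0$, Lemma \ref{WW} gives $\mathbb{P}(\Lambda^X\leq\lambda\mid X_0)=1-e^{-X_0\lambda}$ for $\lambda\in[0,\rho)$ with an atom $e^{-X_0\rho}$ at $\rho$; in particular the conditional law has no atoms on $(0,\rho)$. Since $G$ is a decreasing homeomorphism from $[0,\rho]$ onto $[0,+\infty]$, the conditional law of $\Gamma_1=G(\Lambda^X)$ has no atoms on $(0,+\infty)$. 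Conditioning on $(X_0,Y_0)$ and using the independence of $\Gamma_1$ and $\Gamma_2$, Fubini then yields $\mathbb{P}(\Gamma_1=\Gamma_2>0)=0$. The delicate point in executing this plan is to work on a single almost sure event on which the convergences of $v_{-t}(q)X_t$, $v_{-t}(q)Y_t$ and $v_{-t}(q)(X_t+Y_t)$ hold simultaneously for all rational $q\in(0,\rho)$, which is handled exactly as in the proof of Theorem \ref{main11}.
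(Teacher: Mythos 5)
Your proposal is correct, and the first and last parts (three applications of Theorem \ref{main11} after conditioning on the initial values, independence of $\Gamma_1,\Gamma_2$, and the dichotomy via atomlessness of the conditional laws on $(0,+\infty)$) coincide with the paper's argument. Where you genuinely diverge is the identification $\Gamma_3=\Gamma_1\vee\Gamma_2$ almost surely. The paper treats Theorem \ref{main11} as a black box: it notes that, conditionally on $(X_0,Y_0)$, the extremal-process property (\ref{extremalprocess}) gives the distributional identity $\Gamma_3\overset{d}{=}\Gamma_1\vee\Gamma_2$, while monotonicity of $G$ gives the pathwise bound $e^{-t}G\bigl(\tfrac{1}{X_t+Y_t}\wedge\rho\bigr)\geq e^{-t}G\bigl(\tfrac{1}{X_t}\wedge\rho\bigr)\vee e^{-t}G\bigl(\tfrac{1}{Y_t}\wedge\rho\bigr)$, hence $\Gamma_3\geq\Gamma_1\vee\Gamma_2$ a.s.; equality in law plus a.s.\ dominance then forces a.s.\ equality. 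You instead re-enter the proof of Theorem \ref{main11}: on a common almost-sure event you use the Grey martingale limits $W^q$ for all rational $q\in(0,\rho)$, the additivity $\lim_t v_{-t}(q)(X_t+Y_t)=W^q_X+W^q_Y$ with values in $\{0,+\infty\}$, and deduce $\Lambda^{X+Y}=\Lambda^X\wedge\Lambda^Y$, so that $\Gamma_3=G(\Lambda^X\wedge\Lambda^Y)=G(\Lambda^X)\vee G(\Lambda^Y)$ since $G$ is continuous and non-increasing (with $G(\rho)=0$ covering the extinguishing case). Both routes are sound. The paper's argument is shorter and uses only the statement of Theorem \ref{main11} together with the elementary ``same law plus a.s.\ inequality implies a.s.\ equality'' trick; yours is more explicit and makes transparent why a maximum appears (the smallest threshold wins), at the price of relying on the internal representation $\tilde{Z}(x)=G(\Lambda_x)$ and of assembling one null set outside of which the three families of martingale limits converge simultaneously, a point you rightly flag and which is handled as in the proof of Theorem \ref{main11}. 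One small caveat: Theorem \ref{main11} and Lemma \ref{WW} are stated for deterministic initial values, so all three applications (not only the dichotomy step) should be read conditionally on $(X_0,Y_0)$, exactly as the paper does.
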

\begin{proof} 
Let $(\bar{Z}(x), x\geq0)$ be an extremal-$F$ process, independent of $X_0$ and $Y_0$. Conditionally given $X_0$ and $Y_0$, the processes $(X_t, t\geq 0)$ and $(Y_t, t\geq 0)$ are independent CSBPs with same mechanism $\Psi$ started respectively from $X_0$ and $Y_0$. The branching property ensures that $(X_t+Y_t, t\geq 0)$ is a CSBP$(\Psi)$ started from $X_0+Y_0$. By applying Theorem \ref{main11}, there exists three random variables $\Gamma_1$, $\Gamma_2$ and $\Gamma_3$ such that almost-surely 
$$e^{-t}G\left(\frac{1}{X_t}\wedge \rho\right)\underset{t\rightarrow +\infty}\longrightarrow \Gamma_1, \quad e^{-t}G\left(\frac{1}{Y_t}\wedge \rho\right)\underset{t\rightarrow +\infty}\longrightarrow \Gamma_2 \text{ and } e^{-t}G\left(\frac{1}{X_t+Y_t}\wedge \rho\right)\underset{t\rightarrow +\infty}\longrightarrow \Gamma_3 \text{ a.s.}$$
with the same law respectively as $\bar{Z}(X_0)$, $\bar{Z}(Y_0)$ and $\bar{Z}(X_0+Y_0)$. By (\ref{extremalprocess}), for any $x$ and $y$, $\bar{Z}(x+y)\overset{d}{=}\bar{Z}(x)\vee \bar{Z}'(y)$ with $\bar{Z}'(y)$ independent of $\bar{Z}(x)$, therefore $\Gamma_3\overset{d}{=}\Gamma_1\vee \Gamma_2$. Moreover for any $t\geq 0$, $$e^{-t}G\left(\frac{1}{X_t+Y_t}\wedge \rho\right)\geq e^{-t}G\left(\frac{1}{X_t}\wedge \rho\right) \vee e^{-t}G\left(\frac{1}{Y_t}\wedge \rho\right).$$
Thus $\Gamma_3 \geq \Gamma_1 \vee \Gamma_2 \text{ a.s.}$ which, with the equality in law, entails  $\Gamma_3 =\Gamma_1 \vee \Gamma_2 \text{ a.s.}$ Since, conditionally on $X_0$ and $Y_0$, the laws of $\Gamma_1$ and $\Gamma_2$ have no atoms in $(0,+\infty)$, one has $\Gamma_1=\Gamma_2$ a.s if and only if $\Gamma_1=\Gamma_2=0$ a.s. 
\end{proof}
\begin{lemma}\label{thethreeinfinitevar} Assume $\Psi'(0)=-\infty$, $\int_{0}\frac{\ddr u}{\Psi(u)}=-\infty$ and $\Psi$ is of infinite variation. Consider a flow of CSBPs $(X_t(x), t\geq 0, x\geq 0)$ as in (\ref{poissoninfinitevariation}). Then almost-surely for all $i\in I$, 
\begin{equation}\label{threelimits}
Z_i:=\underset{t\rightarrow +\infty}{\lim}e^{-t}G\left(\frac{1}{X^i_t}\wedge \rho\right).
\end{equation}
The point process 
$\mathcal{M}:=\sum_{i\in I}\delta_{(x_i, Z_i)}$  is a Poisson point process with intensity $\ddr x\otimes \mu(\ddr z)$ where $\bar{\mu}(z)=G^{-1}(z)$. \end{lemma}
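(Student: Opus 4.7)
The plan is to apply the Mapping Theorem for Poisson point processes to $\mathcal{N}$ via the functional $Z:X\mapsto\lim_{t\to+\infty}e^{-t}G(1/X_t\wedge\rho)$. Two facts must be checked: $Z$ is well defined $N_\Psi$-almost everywhere; and the pushforward $Z_\ast N_\Psi$ has tail $G^{-1}$ on $(0,+\infty)$.

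For the first, set $A:=\{X\in\mathcal{D}:\lim_{t\to+\infty}e^{-t}G(1/X_t\wedge\rho)\text{ exists in }[0,+\infty]\}$. The set $A$ is shift invariant ($X\in A$ iff $X_{s+\cdot}\in A$, by a change of variable in the limit), and Theorem \ref{main11} asserts $\mathbb{P}^\Psi_y(A)=1$ for every $y\geq 0$. Identity (\ref{clustermeasure}) applied to $F=\mathbf{1}_{A^c}$ then yields $N_\Psi(A^c\cap\{X_s>0\})=\int\ell_s(\ddr y)\mathbb{P}^\Psi_y(A^c)=0$ for every $s>0$. In the infinite variation case, $N_\Psi$-a.e.\ trajectory is nontrivial, so $N_\Psi(A^c)\leq\sum_{n\geq 1}N_\Psi(A^c\cap\{X_{1/n}>0\})=0$. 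Since $\mathcal{N}$ has intensity $\ddr x\otimes N_\Psi$, almost surely every atom $(x_i,X^i)$ of $\mathcal{N}$ lies in $\mathbb{R}_+\times A$, which defines $Z_i$ simultaneously for all $i\in I$.

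For the tail, I would exploit the scaling identity $Z(X)=e^{-s}Z(X_{s+\cdot})$, immediate from the definition. Combining it with (\ref{clustermeasure}) and the one-dimensional marginal $\mathbb{P}^\Psi_y(Z\leq z)=\exp(-yG^{-1}(z))$ derived in the proof of Theorem \ref{main11}, one computes, for $s>0$ and $z>0$,
\begin{equation*}
N_\Psi(Z>z;\,X_s>0)=\int\ell_s(\ddr y)\bigl(1-e^{-y\,G^{-1}(ze^s)}\bigr)=v_s\!\bigl(G^{-1}(ze^s)\bigr)=G^{-1}(z),
\end{equation*}
the last step using $G^{-1}(u)=v_{\log(1/u)}(\lambda_0)$ and the semigroup identity to rewrite $G^{-1}(ze^s)=v_{-s}(G^{-1}(z))$. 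Since this quantity is independent of $s$, letting $s\downarrow 0$ (so that $\{X_s>0\}$ fills a set of full $N_\Psi$-measure) gives $\bar\mu(z)=N_\Psi(Z>z)=G^{-1}(z)$, and the Mapping Theorem identifies $\mathcal{M}$, restricted to $\mathbb{R}_+\times(0,+\infty)$, as a Poisson point process with intensity $\ddr x\otimes\mu(\ddr z)$. The main obstacle is the first step: upgrading the $\mathbb{P}^\Psi_y$-a.s.\ convergence of Theorem \ref{main11} into a statement holding simultaneously for all atoms of $\mathcal{N}$; the tail computation hinges on a pleasant cancellation between the cumulant semigroup and the exponential scaling built into $Z$.
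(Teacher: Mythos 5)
Your argument is correct, but it takes a genuinely different route from the paper. The paper truncates the Poisson configuration to the finite-intensity sets $I_{l,k}=\{i\in I:\ X^i_{s_l}\geq\epsilon_k\}$, rewrites the corresponding atoms as a Poisson process with i.i.d.\ CSBP marks whose initial law is the normalized restriction of $\ell_{s_l}$, applies Lemma \ref{lemma} to each mark to obtain the almost-sure limits on the events $\Omega_0^{l,k}$, and then identifies the intensity through the double limit $k\rightarrow+\infty$, $l\rightarrow+\infty$, using $v_{s_l}(\lambda)\rightarrow\lambda$ and the continuity of $G^{-1}$. You instead work directly under the cluster measure: shift-invariance of the convergence set combined with (\ref{clustermeasure}) gives $N_\Psi(A^c)=0$, so the Poisson process almost surely has no atom in $\mathbb{R}_+\times A^c$, and the exact cancellation $v_s(G^{-1}(ze^s))=v_s\bigl(v_{-s}(G^{-1}(z))\bigr)=G^{-1}(z)$ together with the mapping theorem yields the intensity with no limiting procedure in $s$ beyond removing the indicator $\{X_s>0\}$. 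This is shorter and arguably cleaner; what the paper's truncation buys is the explicit marked-Poisson representation (the $U_n^{l,k}$ and $V^{(n),l,k}$), which is reused almost verbatim in the proofs of Lemma \ref{Zsup} and Lemma \ref{left limit convergence 1}, so the bookkeeping is not wasted there. Three details you should make explicit to close your version: the composition $v_{-s}(G^{-1}(z))$ is legitimate because $G^{-1}(z)<\rho\leq\bar{v}_s$ for $z>0$; atoms with $Z_i=+\infty$ are almost surely absent (your set $A$ allows the value $+\infty$, whereas the lemma implicitly wants finite $Z_i$), which follows from $N_\Psi(Z>z)=G^{-1}(z)\rightarrow0$ as $z\rightarrow+\infty$; and the passage $s\downarrow0$ uses that $\{X_s>0\}$ increases, as $s$ decreases, to a set carrying all of $\{Z>z\}$ up to an $N_\Psi$-null set, by absorption of the CSBP at $0$ under the Markov property (\ref{markov}) and the fact that $Z=0$ on paths that vanish.
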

\begin{proof}
Consider the flow of CSBP  $(X_t(x), t\geq 0, x\geq0)$ defined by (\ref{poissoninfinitevariation}). First we will prove that the following event
$$\Omega_0:=\left\{\text{ for all } i\in I,\lim_{t\rightarrow+\infty}e^{-t}G\Big(\frac{1}{X^i_t}\wedge\rho\Big)
 \text{ exist  in } [0,+\infty)\right\}$$
has probability $1$. Let $(s_l, l\geq 1), (\epsilon_k, k\geq 1)$ two sequences of positive real numbers decreasing towards $0$. For any fixed $l$ and $k$, define $I_{l,k}=\{i \in I, X^{i}_{s_{l}}\geq \epsilon_k\}.$
Set  
$$\Omega_{0}^{l,k}:=\left\{\text{ for all } i\in I_{l,k},\lim_{t\rightarrow+\infty}e^{-t}G\Big(\frac{1}{X^i_t}\wedge\rho\Big) \text{ exist  in } [0,+\infty)\right\}.$$
For any fixed $l$, set $\Omega_{0}^{l}:=\bigcap_{k\geq 1}\Omega_{0}^{l,k}$. One has $\Omega_{0}^{l+1}\subset \Omega_{0}^{l}$ and  
$\Omega_{0}=\bigcap_{l,k\geq 1}\Omega_{0}^{l,k}.$ 
Observe that 
\beqnn
\{(x_i,(X^{i}_{s_{l}+t})_{t\geq 0}), i\in I_{l,k}\}=\{(U^{l,k}_n,(V_{t}^{(n),l,k})_{t\geq 0}), n=1,2,\ldots\},
\eeqnn
where $(U^{l,k}_n, n\geq 1)$ are the arrival times of the Poisson process 
\beqlb \label{poisson process 1}
(N_{l,k}(x), x\geq 0):=(\#\{x_i\leq x, X^i_{s_l}>\epsilon_k\}, x\geq 0)
\eeqlb
 whose parameter is $\ell_{s_l}((\epsilon_k,+\infty))$, and $(V^{(n),l,k}_{\cdot})_{n\geq 1}$ is a sequence of i.i.d.\;CSBPs$(\Psi)$ with initial 
value $V^{(n),l,k}_0$ whose law is ${\ell_{s_l}(\ddr x;x>\epsilon_k)}/{\ell_{s_l}((\epsilon_k,+\infty))}$. 
It follows from Lemma \ref{lemma} that for all $i\in I_{l,k}$, almost-surely,
\beqlb\label{two limits}
e^{-t}G\left(\frac{1}{X^i_t}\wedge \rho\right)=e^{-s_l}e^{-(t-s_l)}G\left(\frac{1}{X^i_{s_l+(t-s_l)}}\wedge \rho\right)\underset{t\rightarrow +\infty}{\longrightarrow} Z_i,
\eeqlb
where $\{(x_i, Z_i), i\in I_{l,k}\}=\{(U^{l,k}_n, Z^{l,k}_n), n=1,2,\ldots\}$ and $(Z^{l,k}_n)_{n\geq 1}$ is a sequence of i.i.d.\;non-negative random variables independent of the sequence $(U^{l,k}_n,n\geq 1)$.
For each fixed $n$, $Z^{l,k}_n$ has the same distribution as $e^{-s_l}\bar{Z}({V_0^{(n),l,k}})$.
Here $(\bar{Z}(x), x\geq0)$ is an extremal process, independent of $\sigma\{
X^i_u(x), 0\leq u\leq s_l,\ x\geq0,\ i\in I\}$ and thus independent of the sequences $(V_0^{(n),l,k})_{n \geq 1}$.
By (\ref{two limits}), we have that $\mathbb{P}(\Omega_0^{l,k})=1$. Therefore one has $\mathbb{P}(\Omega_0)=
\displaystyle\lim_{k\rightarrow +\infty \atop l\rightarrow +\infty}\mathbb{P}(\Omega_0^{l,k})=1$. Then on $\Omega_0$ define the point process $\mathcal{M}:=\sum_{i\in I}\delta_{(x_i, Z_i)}$. It is easy to see that for $\lambda>0$ and $z>0$,
\beqnn
\mathbb{E}\big[e^{-\lambda\mathcal{M}((0,u]\times(z,+\infty))}\big]=
\lim_{k\rightarrow +\infty \atop l\rightarrow +\infty}\mathbb{E}\big[e^{-\lambda\sum_{i\in I_{l,k}}1_{\{x_i\leq u, Z_i>z\}}}\big]
=\lim_{k\rightarrow +\infty \atop l\rightarrow +\infty}\mathbb{E}\big[e^{-\lambda\sum_{n\geq 1}1_{\{U^{l,k}_n\leq u, Z^{l,k}_n>z\}}}\big],\eeqnn
and by (\ref{two limits}), 
\begin{align*}
N_{\Psi}(X_{s_l}>\epsilon_k; Z>z)&=\ell_{s_l}((\epsilon_k,+\infty))\mathbb{P}(Z_n^{l,k}>z)\\
&=\ell_{s_l}((\epsilon_k,+\infty))\mathbb{P}(\bar{Z}_{V^{(n),l,k}_0}>e^{s_l}z)\\
&=\int_{(\epsilon_k,\infty]}\ell_{s}(\ddr x)\left(1-\mathbb{P}(\bar{Z}_x<e^{s_l}z)\right)\\
&=\int_{(\epsilon_k,+\infty]}\ell_{s_l}(\ddr x)\left(1-e^{-xG^{-1}(ze^{s_l})}\right)\\
&\underset{k \rightarrow +\infty}{\longrightarrow}  \int_{(0,+\infty]}\ell_{s_l}(\ddr x)\left(1-e^{-xG^{-1}(ze^{s_l})}\right)
=v_{s_l}(G^{-1}(ze^{s_l})).
\end{align*}
For any $\lambda$, $v_{s_l}(\lambda)\underset{l\rightarrow +\infty}{\longrightarrow} \lambda$  and $G^{-1}(e^{s_l}z)\underset{l\rightarrow +\infty}{\longrightarrow} G^{-1}(z)$. Thus, $N_{\Psi}(Z>z)=G^{-1}(z)$ and $\mathcal{M}=\sum_{i\in I}\delta_{(x_i,Z_{i})}$ is a Poisson point process with intensity $\ddr x\otimes \mu(\ddr z)$.
\end{proof}

\begin{lemma}\label{thethreefinitevar}
Assume $\Psi'(0)=-\infty$, $\int_{0}\frac{\ddr u}{\Psi(u)}=-\infty$ and $\Psi$ is of finite variation. Consider a flow of CSBPs $(X_t(x), t\geq 0, x\geq 0)$ as in (\ref{poissonfinitevariation}), then almost-surely for all $i\in I$, 
\begin{equation}\label{threelimitsfinitevar}
Z_i:=\underset{t\rightarrow +\infty}{\lim}e^{-t}G\left(\frac{1}{X^i_{t-t_i}}\wedge \rho \right).
\end{equation}
The point process 
$\mathcal{M}:=\sum_{i\in I}\delta_{(x_i, Z_i)}$  is a Poisson point process with intensity $\ddr x\otimes \mu(\ddr z)$ where $\bar{\mu}(z)=G^{-1}(z)$. 
\end{lemma}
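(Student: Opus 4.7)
The plan is to follow the structure of Lemma \ref{thethreeinfinitevar}, adapted to the Poisson construction (\ref{poissonfinitevariation}): every atom of $\mathcal{N}$ is now indexed by $(x_i, t_i, X^i)$ where $X^i$ is, conditionally on its initial value $X^i_0$, a CSBP$(\Psi)$ started from $X^i_0>0$, and the measure $\pi(\ddr r)$ plays the role of the cluster measure $N_\Psi$.

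\textbf{Step 1: simultaneous existence of the $Z_i$.} For $L>0$ and a sequence $\epsilon_K\downarrow 0$, set $I_{L,K}:=\{i\in I:t_i\leq L,\ X^i_0\geq\epsilon_K\}$. Since $\pi((\epsilon_K,+\infty))<+\infty$, the set $I_{L,K}\cap\{x_i\leq u\}$ is a.s.~finite for any $u>0$. Conditionally on the atom positions and the initial values $(X^i_0)_{i\in I_{L,K}}$, the paths $X^i$ are independent CSBPs$(\Psi)$ started from values $\geq\epsilon_K$. Theorem \ref{main11} applied to this countable collection yields a probability-one event on which
\begin{equation*}
\bar Z_i:=\lim_{s\to+\infty}e^{-s}G\Bigl(\frac{1}{X^i_s}\wedge\rho\Bigr)
\end{equation*}
exists for every such $i$, with $\bar Z_i$ distributed as $F^{X^i_0}$ conditionally on $X^i_0$. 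Substituting $t=t_i+s$ then gives $Z_i=e^{-t_i}\bar Z_i$ as in (\ref{threelimitsfinitevar}), and intersecting over the countable family $(L,K)$ covers all of $I$.

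\textbf{Step 2: intensity of $\mathcal{M}$.} By the Poisson mapping theorem, $\mathcal{M}$ is Poisson on $\mathbb{R}_+\times(0,+\infty)$. Using $\P(\bar Z(r)>y)=1-e^{-rG^{-1}(y)}$ together with the finite-variation form (\ref{finitevariationpsi}) of $\Psi$, its intensity has tail
\begin{equation*}
\bar\mu(z)=\int_0^{+\infty}e^{-\textbf{d}t}\,\ddr t\int_0^{+\infty}\pi(\ddr r)\,\P\bigl(\bar Z(r)>e^t z\bigr)=\int_0^{+\infty}e^{-\textbf{d}t}\bigl[\textbf{d}\lambda(t)-\Psi(\lambda(t))\bigr]\ddr t,
\end{equation*}
where $\lambda(t):=G^{-1}(e^t z)$.

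\textbf{Step 3: ODE evaluation.} From $G'=G/\Psi$ (Lemma \ref{slowvariation}) and the identity $G(\lambda(t))=e^t z$, one obtains $\lambda'(t)=\Psi(\lambda(t))$ with $\lambda(0)=G^{-1}(z)$ and $\lambda(+\infty)=0$. The integrand then rewrites as $-\frac{d}{dt}\bigl(e^{-\textbf{d}t}\lambda(t)\bigr)$, so direct integration gives $\bar\mu(z)=\lambda(0)-\lim_{t\to+\infty}e^{-\textbf{d}t}\lambda(t)=G^{-1}(z)$; the boundary term vanishes because $\Psi'(0+)=-\infty$ forces $\lambda$ to decay faster than any exponential, independently of the sign of $\textbf{d}$.

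The main obstacle is Step 1: arranging a single null set outside of which the limit exists for every $i\in I$. The truncation just described handles this. Once in place, the rest reduces to Theorem \ref{main11} and a short ODE manipulation closely paralleling the one that appears in Proposition \ref{explosiveflow}.
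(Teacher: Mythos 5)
Your proposal is correct and follows essentially the same route as the paper: truncate the Poisson configuration in $t_i$ and in the initial mass to reduce to countably many conditionally independent CSBPs, apply the fixed-initial-value convergence (Theorem \ref{main11}, via conditioning as in Lemma \ref{lemma}) to get the $Z_i=e^{-t_i}\bar Z_i$ simultaneously, and then identify $\bar\mu(z)$ as the pushforward intensity, evaluated through $\int_0^{+\infty}e^{-\textbf{d}t}\bigl[\textbf{d}\lambda(t)-\Psi(\lambda(t))\bigr]\ddr t$ with the boundary term killed by $\Psi'(0+)=-\infty$, exactly as in the paper's integration by parts. Your only deviation is cosmetic: working directly with $\lambda(t)=G^{-1}(e^{t}z)$ and $\lambda'=\Psi(\lambda)$ lets you avoid the paper's splitting of the integral at $t=\log(1/z)$ between $v_t$ and $v_{-t}$.
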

\begin{proof} Consider the flow of CSBP  $(X_t(x), t\geq 0, x\geq0)$ defined by (\ref{poissonfinitevariation}). Let $(s_l, l\geq 1), (\epsilon_k, k\geq 1)$ be two sequences of positive real numbers such that $s_l\uparrow\infty$ as $l\rightarrow\infty$ and $\epsilon_k\downarrow0$ as
$k\rightarrow\infty$. For any fixed $l$ and $k$, define $I_{l,k}=\{i \in I, t_i\leq s_l, X^{i}_{t_i}> \epsilon_k\}.$ Note that 
 \beqnn
 \sum_{i\in I_{l,k}}\delta_{(x_i, t_i, X^i_{\cdot})}=\sum_{n=1}^\infty\delta_{(U_n^{l,k},T_n^{l,k},V_{\cdot}^{(n),l,k})},
 \eeqnn
where $(U^{l,k}_n, n\geq 1)$ are the arrival times of the Poisson process $$(N_{l,k}(x), x\geq 0):=(\#\{x_i\leq x, t_i\leq s_l, X^i_{t_i}>\epsilon_k\}, x\geq 0)$$ whose parameter 
 $C_{l,k}:=\textbf{d}^{-1}(1-e^{-\textbf{d}s_l})\pi((\epsilon_k,+\infty))$ if $\textbf{d}\neq 0$,  and $C_{l,k}:=s_l\pi((\epsilon_k,+\infty))$ if $\textbf{d}=0$.
The random variables $(T_n^{l,k})_{n\geq1}$ form a sequence of i.i.d.\,random variables with law given by $(1-e^{-\textbf{d}s_l})^{-1}\textbf{d}e^{-\textbf{d}t}1_{[0,s_l]}(t)\ddr t$ if $\textbf{d}\neq 0$, and by $s_l^{-1}1_{[0,s_l]}(t)dt$ if $\textbf{d}=0$. The processes $(V^{(n),l,k}_{\cdot})_{n\geq 1}$ form a sequence of i.i.d.\;CSBPs$(\Psi)$ with initial 
value $V^{(n),l,k}_0$ whose law is $\pi((\epsilon_k,+\infty))^{-1}1_{(\epsilon_k,\infty)}(r)\pi(\ddr r)$. Moreover $(U^{l,k}_n)_{n\geq 1}$, $(T_n^{l,k})_{n\geq1}$ and $(V^{(n),l,k}_\cdot)_{n\geq 1}$
are independent of each other. It follows from Lemma \ref{lemma} that for all $i\in I_{l,k}$, almost-surely, 
\beqlb\label{two limits 2}
e^{-t}G\left(\frac{1}{X^i_t}\wedge \rho\right)=e^{-t_i}e^{-(t-t_i)}G\left(\frac{1}{X^i_{t_i+(t-t_i)}}\wedge \rho\right)\underset{t\rightarrow +\infty}{\longrightarrow} Z_i,
\eeqlb where $\{(x_i, Z_i), i\in I_{l,k}\}=\{(U^{l,k}_n, Z^{l,k}_n), n=1,2,\ldots\}$ and $(Z^{l,k}_n)_{n\geq 1}$ is a sequence of i.i.d.\;non-negative random variables independent of the sequence $(U^{l,k}_n,n\geq 1)$.
For each fixed $n$, $Z^{l,k}_n$ has the same distribution as $e^{-T_n^{l,k}}\bar{Z}({V_0^{(n),l,k}})$
where $(\bar{Z}(x), x\geq0)$ is an extremal process independent of $T_n^{l,k}$ and $V_0^{(n),l,k}$. As in the infinite variation case, we deduce the existence of an almost-sure event $\Omega_0$ on which the limits  in (\ref{two limits 2}) exist for all $i\in I$ almost-surely. Then on $\Omega_0$ set $\mathcal{M}:=\sum_{i\in I}\delta_{(x_i, Z_i)}$. It is a Poisson point process whose intensity $\ddr x\otimes \mu(\ddr z)$ verifies
\begin{align*}
\bar{\mu}(z)&=\displaystyle\lim_{l\rightarrow\infty \atop k\rightarrow \infty}C_{l,k}\mathbb{P}(e^{-T_n^{l,k}}Z_{V_0^{(n),l,k}}>z)\\ 
&=\int_{0}^{+\infty}e^{-\textbf{d}t}\ddr t\int_{0}^{+\infty}\pi(\ddr r)\mathbb{P}(e^{-t}Z_r>z)\\
&=\int_{0}^{+\infty}e^{-\textbf{d}t}\ddr t\int_{0}^{+\infty}\pi(\ddr r)\left(1-e^{-rG^{-1}(ze^{t})}\right) \text{ by Theorem \ref{main11}}\\
&=\int_{0}^{+\infty}e^{-\textbf{d}t}\ddr t \int_{0}^{+\infty}\pi(\ddr r)\left(1-e^{-rv_{-t-\log(z)}(\lambda_0)}\right)\text{by Lemma \ref{slowvariation}: } G^{-1}(ze^{t})=v_{\log(e^{-t}/z)}(\lambda_0)\\
&=\int_{0}^{+\infty}e^{-\textbf{d}t}\ddr t\left(-\Psi(v_{-t-\log(z)}(\lambda_0))+\textbf{d}v_{-t-\log(z)}(\lambda_0)\right) \text{ since } \Psi \text{ has the form (\ref{finitevariationpsi}}).
\end{align*} From the last equality above, we see that
\begin{align*}
\bar{\mu}(z)&=\int_{0}^{\log(1/z)}e^{-\textbf{d}t}\ddr t\left(-\frac{\ddr v_{-t+\log(1/z)}(\lambda_0))}{\ddr t}+\textbf{d}v_{-t+\log(1/z)}(\lambda_0)\right) \text{ since } v_{t}(\lambda) \text{ satisfies (\ref{cumulantintegral})}
\\
&\qquad +\int_{\log(1/z)}^{+\infty}e^{-\textbf{d}t}\ddr t\left(-\frac{\ddr v_{-t}(v_{\log(1/z)}(\lambda_0))}{\ddr t}+\textbf{d}v_{-t}(v_{\log(1/z)}(\lambda_0))\right) \text{ since } v_{-t}(\lambda) \text{ satisfies (\ref{v-})}
\\
&=\left[-e^{-\textbf{d}t}v_{-t+\log(1/z)}(\lambda_0))\right]^{t=\log(1/z)}_{t=0}+\left[-e^{-\textbf{d}t}v_{-t}(v_{\log(1/z)}(\lambda_0))\right]^{t=+\infty}_{t=\log(1/z)} \text{ by integration by parts.}
\end{align*}
We show that $\Psi'(0+)=-\infty$ entails $v_{-t}(\lambda)e^{-\textbf{d}t}\underset{t\rightarrow +\infty}{\longrightarrow} 0$ for any $\lambda$. Since $\Psi'(0+)=-\infty$, for any $b>0$, there exists $\lambda_1$ such that for all $u\leq \lambda_1$, $|\Psi(u)|\geq bu$. Therefore by applying (\ref{v-}), one has
$$t\leq \int_{\lambda}^{\lambda_1}\frac{\ddr u}{|\Psi(u)|}+\int_{v_{-t}(\lambda)}^{\lambda_1}\frac{\ddr u}{|\Psi(u)|}$$ and then for any $b>0$
$$v_{-t}(\lambda)\leq c_{\lambda_1}e^{-bt}$$ for a certain constant $c_{\lambda_1}$. Therefore  $v_{-t}(\lambda)e^{-\textbf{d}t}\underset{t\rightarrow +\infty}{\longrightarrow} 0$ and it comes
$$\bar{\mu}(z)=-e^{-\textbf{d}\log(1/z)}\lambda_0+v_{\log(1/z)}(\lambda_0)-0+e^{-\textbf{d}\log(1/z)}\lambda_0=G^{-1}(z).$$
\end{proof}
\begin{lemma}\label{Zsup} 
For any fixed $x\geq 0$, almost-surely 
$$\forall y\geq x, \underset{t\rightarrow +\infty}{\lim}e^{-t}G\left(\frac{1}{X_t(y)-X_t(x)}\wedge \rho\right)=Z(x,y):=\sup_{x< x_i\leq y}Z_i.$$
\end{lemma}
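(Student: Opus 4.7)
The plan is to identify the limit $\tilde{Z}(x,y) := \lim_{t\to+\infty} e^{-t}G(1/(X_t(y)-X_t(x)) \wedge \rho)$ with $M(x,y) := \sup_{x < x_i \leq y} Z_i$ in two stages: first an a.s.\ equality for each fixed $y$ obtained from an a.s.\ inequality combined with equality in distribution, then a sandwich along a countable dense set to make the identification simultaneous. The a.s.\ inequality $\tilde{Z}(x,y) \geq M(x,y)$ is immediate: for every atom $x_i\in (x,y]$ the Poisson construction gives $X_t(y) - X_t(x) \geq X^i_t$ (resp.\ $\geq X^i_{t-t_i}$ for $t \geq t_i$ in the finite variation case), and as $G$ is non-increasing by Lemma \ref{slowvariation},
$$e^{-t} G\!\left(\tfrac{1}{X_t(y) - X_t(x)} \wedge \rho\right) \;\geq\; e^{-t} G\!\left(\tfrac{1}{X^i_t} \wedge \rho\right) \;\longrightarrow\; Z_i,$$
using Lemma \ref{thethreeinfinitevar} (or Lemma \ref{thethreefinitevar}). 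Taking the supremum over such $i$ on a single almost-sure event gives $\liminf_t e^{-t}G(\,\cdots\,) \geq M(x,y)$.

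For the matching upper bound at each fixed $y$, I would invoke the branching property: $(X_t(y) - X_t(x), t\geq 0)$ is a CSBP$(\Psi)$ started from $y-x$, independent of $(X_t(x'), 0 \leq x' \leq x, t\geq 0)$. Theorem \ref{main11} then yields that $e^{-t}G(1/(X_t(y) - X_t(x)) \wedge \rho)$ converges a.s.\ to a random variable $\tilde{Z}(x,y)$ with distribution $F^{y-x}$, where $F(z) = \exp(-G^{-1}(z))$. On the other hand, by Lemma \ref{thethreeinfinitevar} the restriction of $\mathcal{M}$ to $(x,y]\times(0,+\infty)$ is a Poisson point process of intensity $\ddr x' \otimes \mu(\ddr z)$, hence
$$\mathbb{P}(M(x,y) \leq z) \;=\; \exp\!\big(-(y-x)\bar\mu(z)\big) \;=\; \exp\!\big(-(y-x)G^{-1}(z)\big) \;=\; F^{y-x}(z).$$
The a.s.\ inequality combined with equality in law forces $\tilde{Z}(x,y) = M(x,y)$ a.s.\ for each fixed $y \geq x$.

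To upgrade to the simultaneous statement, I use that $y \mapsto M(x,y)$ and $y \mapsto \tilde{Z}(x,y)$ are both non-decreasing — the latter because $y \mapsto X_t(y) - X_t(x)$ is non-decreasing and $G$ is non-increasing. Choose a countable dense set $D \subset [x,+\infty)$ containing the (at most countable) set of atoms $\{x_i\}\cap [x,+\infty)$. On the intersection over $y' \in D$ of the almost-sure events from the previous step, $\tilde{Z}(x,y') = M(x,y')$ for every $y' \in D$; for arbitrary $y \geq x$ and $y_1 < y < y_2$ in $D$, the monotonicity in $y$ at fixed $t$ yields
$$M(x,y_1) \;\leq\; \liminf_{t\to+\infty} e^{-t}G\!\left(\tfrac{1}{X_t(y) - X_t(x)} \wedge \rho\right) \;\leq\; \limsup_{t\to+\infty} e^{-t}G(\,\cdots\,) \;\leq\; M(x,y_2),$$
and letting $y_1 \uparrow y$, $y_2 \downarrow y$ through $D$ forces both bounds to $M(x,y)$, using right-continuity of $M(x, \cdot)$ together with the fact that all its jumps are in $D$. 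The main obstacle is the distributional identification in the second step, which rests crucially on Theorem \ref{main11} and on the intensity identification $\bar\mu = G^{-1}$ established in Lemma \ref{thethreeinfinitevar}; once these are in hand, the sandwich argument is routine.
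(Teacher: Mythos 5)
Your overall strategy is the one the paper itself uses: the lower bound $\liminf_t e^{-t}G(\cdot)\geq M(x,y)$ from the atoms and monotonicity of $G$, the identification at each \emph{fixed} $y$ by combining this a.s.\ inequality with the equality in law $\tilde Z(x,y)\overset{d}{=}M(x,y)$ (Theorem \ref{main11} for the marginal of $\tilde Z$, Lemma \ref{thethreeinfinitevar} for the intensity $\bar\mu=G^{-1}$), and then a countable approximation plus right-continuity of the record process. Those parts are correct.

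There is, however, a genuine misstep in your final paragraph. You take a countable dense set $D$ that \emph{contains the atoms} $\{x_i\}$ and then claim that ``on the intersection over $y'\in D$ of the almost-sure events from the previous step'' one has $\tilde Z(x,y')=M(x,y')$ for all $y'\in D$. The previous step only proves this identity for a \emph{deterministic} $y'$: it rests on applying Theorem \ref{main11} to the CSBP started from the constant $y'-x$ and on computing $\mathbb{P}(M(x,y')\leq z)=e^{-(y'-x)G^{-1}(z)}$ from the Poisson intensity. Neither computation is available at a random location $y'=x_i$ (conditionally on there being an atom at $x_i$, the law of the configuration is a Palm law, and the set $D$ itself is random, so you cannot intersect ``the a.s.\ events indexed by $D$''). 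As written, the step that handles the jump points of $M(x,\cdot)$ therefore fails. The repair is easy and already contained in your own first paragraph: the bound $\liminf_t e^{-t}G\bigl(\frac{1}{X_t(y)-X_t(x)}\wedge\rho\bigr)\geq M(x,y)$ holds simultaneously for \emph{all} $y$ on the single a.s.\ event of Lemma \ref{thethreeinfinitevar}, so no left approximation (and hence no inclusion of the atoms in $D$) is needed; take $D=\mathbb{Q}\cap[x,\infty)$, deduce $\limsup_t e^{-t}G(\cdot)\leq \tilde Z(x,q)=M(x,q)$ for every rational $q>y$, and let $q\downarrow y$ using the c\`adl\`ag property of $y\mapsto M(x,y)$ (Proposition 4.7 in Resnick, as the paper cites). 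With this modification your proof coincides with the paper's.
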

\begin{proof}
Fix $x\in\mathbb{R}_+$, $\big(X_t(y)-X_t(x): t\geq0, y\geq x\big)$ is a flow of CSBPs$(\Psi)$ independent of $(X_t(y), t\geq0, 0\leq y\leq x)$. Then 
without loss of generality we only consider the case $x=0$. It follows from Theorem \ref{main11} that for any $y\geq0$, almost surely $\underset{t\rightarrow +\infty}{\lim}e^{-t}G\left(\frac{1}{X_t(y)}\wedge \rho\right)=\tilde{Z}(y)$. Furthermore, by  Lemma \ref{thethreeinfinitevar} and Lemma \ref{thethreefinitevar}, we have
 some $\Omega_0$ with $\mathbb{P}(\Omega_0)=1$ such that on $\Omega_0$ for all $i\in I$ $\underset{t\rightarrow +\infty}\lim e^{-t}G\left(\frac{1}{X^i_t}\wedge \rho\right)=Z_i$. Note that $G$ is non-increasing and for all $i\in I$ with $x_i\leq y$, $X_t(y)=\underset{0<x_j\leq y}\sum X^{j}_t\geq X^{i}_t$. Therefore for fixed $y$, almost surely $\tilde{Z}(y)\geq Z_i$ for all $i$ such that $0<x_i\leq y$. This entails that for any fixed $y$, $$\tilde{Z}(y)\geq \underset{0<x_i\leq y}{\sup}Z_i=:Z(0,y), \ a.s.$$ 
Since $\tilde{Z}(y)$ and $Z(0,y)$ have the same law, we can conclude that for any fixed $y$,  $\tilde{Z}(y)=Z(0,y)$ a.s. Then we have some $\Omega_1$ with $P(\Omega_1)=1$ such that on $\Omega_1$ for all $q\in \mathbb{Q}_+$, $$\underset{t\rightarrow +\infty}\lim e^{-t}G\left(\frac{1}{X_t(q)}\wedge \rho\right)=\tilde{Z}(q)=Z(0,q).$$
We now work deterministically on $\Omega_0\cap\Omega_1$. For all $y$ and $q\in \mathbb{Q}$, such that $y<q$, and any $x_i\leq y$, 
$$e^{-t}G\left(\frac{1}{X^i_t}\wedge \rho\right)
\leq e^{-t}G\left(\frac{1}{X_t(y)}\wedge \rho\right)\leq e^{-t}G\left(\frac{1}{X_t(q)}\wedge \rho\right).$$
Then for all $i$ with $x_i\leq y$ 
$$Z_i\leq \underset{t\rightarrow +\infty}{\liminf}\ e^{-t}G\left(\frac{1}{X_t(y)}\wedge \rho\right)\leq \underset{t\rightarrow +\infty}{\limsup}\ e^{-t}G\left(\frac{1}{X_t(y)}\wedge \rho\right)\leq Z(0,q).$$
Therefore
$$Z(0,y)=\sup_{0<x_i\leq y}Z_i\leq \underset{t\rightarrow +\infty}{\liminf}\ e^{-t}G\left(\frac{1}{X_t(y)}\wedge \rho\right)\leq \underset{t\rightarrow +\infty}{\limsup}\ e^{-t}G\left(\frac{1}{X_t(y)}\wedge \rho\right)\leq Z(0,q).$$
Proposition 4.7-ii) in \cite{Res87} ensures that the process $(Z(0,y), y\geq0)$ is c\`adl\`ag, therefore by letting $q$ to $y$, we obtain $\underset{t\rightarrow +\infty}{\lim}e^{-t}G\left(\frac{1}{X_t(y)}\wedge \rho\right)=Z(0,y)$ for all $y\geq 0$ almost-surely. 
\end{proof}
We write $Z(x)$ for $Z(0,x)$. The first statement of Theorem \ref{main1}-i) is now established. It remains to prove that the super-prolific individuals  correspond to the jumps of $(Z(x), x\geq 0)$. 
\begin{lemma}
$\mathcal{P}=\{x_i; Z_i>0, \forall i\in I\}$ a.s.
\end{lemma}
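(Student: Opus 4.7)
The plan is to reuse the decompositions from Lemmas \ref{thethreeinfinitevar}--\ref{thethreefinitevar}, apply Theorem \ref{main11} a second time to each cluster $X^i$, and then invoke the identification $\tilde{Z}(x)=G(\Lambda_x)$ together with $\{\Lambda_x=\rho\}=\{X_t(x)\to 0\}$ obtained in the proof of Theorem \ref{main11}.

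First, by the Poisson constructions (\ref{poissoninfinitevariation}) and (\ref{poissonfinitevariation}), the jump of the subordinator $X_t(\cdot)$ at $x_i$ equals $X^i_t$ in the infinite variation case and $X^i_{(t-t_i)_+}$ in the finite variation case. Hence $x_i\in\mathcal{P}$ is equivalent to $X^i_u\to+\infty$ as $u\to+\infty$, and by the dichotomy of Theorem \ref{Grey}-i) applied to a CSBP$(\Psi)$ issued from a positive value, the only alternative is $X^i_u\to 0$.

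Next, with the sequences $s_l\downarrow 0$ and $\epsilon_k\downarrow 0$ of Lemma \ref{thethreeinfinitevar} (resp.\ Lemma \ref{thethreefinitevar}), every $i\in I$ eventually lies in some $I_{l,k}$ on a full-measure event, so that the shifted process $(X^i_{s+u},\, u\geq 0)$, with $s=s_l$ in the infinite variation case and $s=t_i$ in the finite variation case, is a genuine CSBP$(\Psi)$ started from the positive value $X^i_s$. Splitting $e^{-t}=e^{-s}e^{-(t-s)}$ and applying Theorem \ref{main11} to this shifted CSBP yields
\[
Z_i=e^{-s}\,\tilde{Z}_i,\qquad \tilde{Z}_i=G(\Lambda^i),
\]
where $\Lambda^i\in[0,\rho]$ is the random threshold attached to $(X^i_{s+u},\, u\geq 0)$ exactly as in the proof of Theorem \ref{main11}. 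The identification $\{\Lambda^i=\rho\}=\{X^i_{s+u}\underset{u\to+\infty}{\longrightarrow} 0\}=\{x_i\notin\mathcal{P}\}$, combined with $G(\rho)=0$ and $G(y)>0$ for $y\in(0,\rho)$, then gives $Z_i>0\iff x_i\in\mathcal{P}$. Intersecting the full-measure events over $l,k$ and $i$ produces the desired equality $\mathcal{P}=\{x_i: Z_i>0\}$.

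The delicate point is the second-paragraph assertion that every $i\in I$ lies in some $I_{l,k}$. This rests on the fact that $N_\Psi$-a.e.\ path is strictly positive for arbitrarily small positive times (respectively, that the initial masses drawn from $\pi$ in the finite variation construction are positive), and it is precisely what is used in building the full-measure event $\Omega_0$ in the proofs of Lemmas \ref{thethreeinfinitevar}--\ref{thethreefinitevar}. Once this is in place, the rest is just the Grey dichotomy combined with the explicit link $\tilde{Z}=G\circ\Lambda$ already recorded inside the proof of Theorem \ref{main11}.
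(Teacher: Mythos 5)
Your proof is correct, but it takes a genuinely different route from the paper's. The paper's argument is a one-inclusion-plus-counting argument: from $Z_i>0$ one reads off $G\left(1/X^i_t\wedge\rho\right)\to+\infty$, hence $X^i_t\to+\infty$ and $x_i\in\mathcal P$; then, invoking Lemma \ref{thethreeinfinitevar}/\ref{thethreefinitevar}, the Poisson process $\sum_i 1_{\{Z_i>0\}}\delta_{x_i}$ has intensity $\bar\mu(0+)\,\ddr x=\rho\,\ddr x$, which is exactly the intensity of $\#\mathcal P\cap[0,\cdot]$ from Lemma \ref{Bertoin}; since the former is contained in the latter with the same Poisson law, they coincide a.s. Your proof instead obtains both inclusions pointwise by writing $Z_i=e^{-s}G(\Lambda^i)$, where $\Lambda^i$ is the Grey-martingale threshold of the shifted cluster, and by invoking the a.s.\ event identification $\{\Lambda^i=\rho\}=\{X^i_{s+\cdot}\to 0\}$ established inside the proof of Theorem \ref{main11}, together with $G>0$ on $(0,\rho)$ and $G(\rho)=0$. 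Both are valid. Your version is conceptually tidier (no appeal to Lemma \ref{Bertoin}), but be aware that the identification $\{\Lambda_x=\rho\}=\{X_t(x)\to 0\}$ in the proof of Theorem \ref{main11} is itself obtained from a probability-matching argument (one inclusion is deterministic, both events have mass $e^{-x\rho}$), so the intensity comparison has not really disappeared; it has merely been relocated. Two details worth spelling out if you wanted this airtight: the identification of events is stated there for a deterministic initial mass, so you should condition on $X^i_s$ and then intersect over the countably many $i\in I$; and you should state the convention $G(\rho)=0$ even when $\rho=+\infty$ (the case $-\Psi$ a subordinator exponent), which is what makes the equivalence $Z_i>0\iff\Lambda^i<\rho$ work in all cases, in particular consistently with Lemma \ref{Bertoin} giving $\mathcal P=\{x_i,i\in I\}$ there.
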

\begin{proof}
If $Z_i>0$ then $X^i_t\underset{t\rightarrow +\infty}{\longrightarrow} +\infty$ and $x_i$ is prolific. One can check that in both infinite variation and finite variation cases, the Poisson point process $\sum_{i\in I}1_{\{Z_i>0\}}\delta_{x_i}$ has intensity $\rho\ddr x$, therefore
$\mathcal{P}=\{x_i, i\in I; Z_i>0\}.$
\end{proof}
\begin{lemma}\label{left limit convergence 1} Under the conditions of Lemma \ref{thethreeinfinitevar}, we have that 
 almost-surely for all $i\in I$, \beqnn
   \underset{t\rightarrow +\infty}{\lim}e^{-t}G\left(\frac{1}{X_t(x_i-)}\wedge \rho\right)=Z(x_i-)=\sup_{0<x_j<x_i}Z_j.
\eeqnn
 \end{lemma}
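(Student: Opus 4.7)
The plan is to deduce the statement at the random abscissa $x_i$ from Lemma \ref{Zsup} applied to a slightly modified flow obtained by excising the single cluster located at $x_i$. For each $i\in I$, set
$$\tilde X^{(i)}_t(y):=X_t(y)-X^i_t 1_{\{y\geq x_i\}},\qquad t\geq 0,\ y\geq 0,$$
so that in particular $\tilde X^{(i)}_t(x_i)=X_t(x_i-)$. By construction, $\tilde X^{(i)}$ is the flow built through (\ref{poissoninfinitevariation}) from the thinned Poisson point process $\tilde{\mathcal{N}}^{(i)}:=\mathcal{N}-\delta_{(x_i,X^i)}$.

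The key probabilistic input is Slivnyak-Mecke's theorem: conditionally on the atom $(x_i,X^i)$ of $\mathcal{N}$, the residual measure $\tilde{\mathcal{N}}^{(i)}$ is a Poisson point process on $\mathbb{R}_+\times\mathcal{D}$ with intensity $\ddr x\otimes N_\Psi(\ddr X)$, independent of $(x_i,X^i)$. Therefore $\tilde X^{(i)}$ has, conditionally on $(x_i,X^i)$, the same law as the original flow $X$. Applying Lemma \ref{Zsup} with base point $0$ to $\tilde X^{(i)}$ yields, on an event of full conditional probability,
$$\forall y\geq 0,\qquad \lim_{t\to+\infty} e^{-t}G\Big(\frac{1}{\tilde X^{(i)}_t(y)}\wedge\rho\Big)=\sup_{j\in I,\ j\neq i,\ x_j\leq y} Z_j.$$

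I then specialize this identity at the specific value $y=x_i$, which lies in the almost-sure set above because the quantifier runs over all $y\geq 0$. Using that the $x$-coordinates of distinct atoms of $\mathcal{N}$ are almost-surely pairwise distinct, the supremum at $y=x_i$ equals $\sup_{x_j<x_i}Z_j=Z(x_i-)$, while by construction $\tilde X^{(i)}_t(x_i)=X_t(x_i-)$. Removing the conditioning on $(x_i,X^i)$ establishes the desired convergence for each fixed $i\in I$, and a countable union over $i\in I$ upgrades this to the simultaneous almost-sure statement. The main obstacle is the Palm-calculus step, namely the clean justification that excising a selected atom from $\mathcal{N}$ produces a Poisson point process with the same intensity, independent of the removed atom; once this is granted, the identification of the limit follows from a direct application of Lemma \ref{Zsup} to $\tilde X^{(i)}$.
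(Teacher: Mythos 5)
Your proposal is correct, but it follows a genuinely different route from the paper. You transfer the almost-sure, ``for all $y$'' statement of Lemma \ref{Zsup} to the configuration with one atom excised, via the fact that the reduced Palm distribution of the Poisson process $\mathcal{N}$ coincides with its law (Slivnyak--Mecke), and then evaluate at $y=x_i$, using that the abscissae are a.s.\ pairwise distinct so that $\tilde X^{(i)}_t(x_i)=X_t(x_i-)$ and $\sup_{x_j\le x_i,\,j\ne i}Z_j=\sup_{x_j<x_i}Z_j=Z(x_i-)$. The paper avoids Palm calculus altogether: it restricts to the finite-intensity families $I_{l,k}=\{i:X^i_{s_l}\ge\epsilon_k\}$, enumerates them by the arrival times $U_n$ of a Poisson process, uses Lemma \ref{lemma} to get existence of $\hat Z_n:=\lim_t e^{-t}G\big(\frac{1}{X_t(U_n-)}\wedge\rho\big)$, derives the a.s.\ bound $\hat Z_n\ge Z(U_n-)$ from monotonicity of $G$, and then builds an explicit coupling with an independent copy of the flow grafted at time $s_l$ to show $\hat Z_n\overset{d}{=}Z(U_n-)$; the a.s.\ inequality together with equality in law forces a.s.\ equality. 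Your route is shorter and reuses Lemma \ref{Zsup} wholesale, but the step you flag must indeed be handled with care: one cannot literally ``condition on the atom $(x_i,X^i)$'' (a null event), nor argue ``for each fixed $i\in I$'' and take a countable union, since the index $i$ is selected from the random configuration. The clean implementation is the Mecke equation itself: letting $A$ be the full-measure set of configurations furnished by Lemmas \ref{thethreeinfinitevar} and \ref{Zsup}, one checks $\mathbb{E}\big[\sum_{i\in I}h(x_i,X^i)1_{\{\tilde{\mathcal{N}}^{(i)}\notin A\}}\big]=\int \mathbb{E}\big[h(x,X)\big]\,\mathbb{P}(\mathcal{N}\notin A)\,\ddr x\,N_\Psi(\ddr X)=0$ for a strictly positive $\ddr x\otimes N_\Psi$-integrable $h$ (available by $\sigma$-finiteness), which yields the statement simultaneously for all atoms in one stroke. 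With that rephrasing your proof is complete. What each approach buys: the paper's domination-plus-coupling argument is self-contained (only the branching property and Lemma \ref{lemma}) and carries over verbatim to the finite-variation flow of Lemma \ref{left limit convergence 2}; your Palm argument is more economical and makes transparent that the left limit at $x_i$ is simply the record process of the reduced configuration, at the cost of invoking Palm machinery not otherwise used in the paper.
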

\begin{proof}
Recall $I_{l,k}=\{i \in I, X^{i}_{s_{l}}\geq \epsilon_k\}.$ As in Lemma \ref{thethreeinfinitevar}, it suffices to show that 
for any fixed $k,l>0$,
\beqlb\label{left limit convergence 1 lk}
\mathbb{P}\Big(\text{ for all } i\in I_{l,k},\lim_{t\rightarrow+\infty}
e^{-t}G\Big(\frac{1}{X_t(x_i-)}\wedge\rho\Big)
=Z(x_i-)\Big)=1.
\eeqlb
Observe that \begin{align}\label{3.32}
\{(x_i,(X_{s_{l}+t}(x_i-))_{t\geq 0}), i\in I_{l,k}\}&=\{(U^{l,k}_n,(X_{s_{l}+t}(U^{l,k}_n-))_{t\geq 0}), n=1,2,\ldots\},
\end{align}
where $(U^{l,k}_n, n\geq 1)$ are the arrival times of the Poisson process given by  (\ref{poisson process 1})
and for fixed $n$,
$(X_{s_{l}+t}(U^{l,k}_n-))_{t\geq 0}=\left(\sum_{0<x_i<U^{l,k}_{n}}X^{i}_{s_{l}+t}\right)_{t\geq 0}$ is a CSBP$(\Psi)$ with initial value $X_{s_{l}}(U^{l,k}_{n}-)$. For simplicity we write $U_n$ for $U_n^{l,k}$. It follows from Lemma \ref{lemma} that for each $n$, 
\begin{equation}\label{3.33}
e^{-t}G\left(\frac{1}{X_t(U_n-)}\wedge \rho\right)=e^{-s_l}e^{-(t-s_l)}G\left(\frac{1}{X_{s_l+(t-s_l)}(U_n-)}\wedge \rho\right)\underset{t\rightarrow +\infty}\longrightarrow \hat{Z}_n\ a.s. \\
\end{equation}
By (\ref{3.32})-(\ref{3.33}), to establish (\ref{left limit convergence 1 lk}) we only need to prove that for any fixed $n$, $\hat{Z}_n=Z(U_n-)$ a.s.

Step 1. We claim that for any fixed $n$, $\hat{Z}_n\geq Z(U_n-)$ a.s. In fact, note that $X_{t}(U_n-)=\sum_{x_j<U_n}X^j_{t}$. Since $G$ is a non-increasing function, we have for all $t\geq 0$ and all $i\in I$ such that $x_i<U_n$, $e^{-t}G\left(\frac{1}{X_t(U_n-)}\right)\geq e^{-t}G\left(\frac{1}{X^{i}_t}\right)$. Then the claim follows from (\ref{3.33}) and Lemma \ref{thethreeinfinitevar}.

Step 2.  We use the coupling method to prove that for any fixed $n$, $\hat{Z}_n$ and $Z(U_n-)$ have the same distributions. 
On an extended probability space, let $(Y_t(y), t\geq0, y\geq0)$ be an independent 
copy of $(X_t(x), t\geq0, x\geq0)$ given by 
\beqnn
Y_t(y)=\sum_{y_j\leq y} Y_t^j,\quad t>0, \ y\geq0,
\eeqnn
with $Y_0(y)=y$, where $\mathcal{N}^Y=\sum_{i\in J}\delta_{(y_j, Y^j)}$ is a Poisson point process over $\mathbb{R}_+\times \mathcal{D}$ 
with intensity $\ddr y \otimes N_{\Psi}(\ddr Y)$.  For $s_l>0$ and for all $i\in I$, define
\beqlb\label{Xbar}
\bar{X}^i_t=\sum_{X_{s_l}(x_i-)<y_j\leq X_{s_l}(x_i)}Y_t^j,\  t>0\quad \text{and}\quad\bar{X}^i_0=\Delta X_{s_l}(x_i).
\eeqlb
Note that $X^i_{s_l}=\Delta X_{s_l}(x_i)$ and 
\beqlb\label{coupling}
\sum_{i\in I}1_{\{X^i_{s_l}>0\}}\delta_{(x_i, X^i_{s_l+\cdot})}\overset{d}{=}\sum_{i\in I} 1_{\{\bar{X}^i_0>0\}}\delta_{(x_i, \bar{X}^i_{\cdot})}.\eeqlb
Set $\bar{X}_{t}(x):=\sum_{x_i\leq x}\bar{X}^i_t$, one has
\beqlb\label{Xbar2}
(\bar{X}_{t}(x), \ t\geq0, x\geq0)\overset{d}{=}(X_{s_l+t}(x),\  t\geq0, x\geq0).
\eeqlb
Applying Lemma \ref{thethreeinfinitevar} and Lemma \ref{Zsup} to  $(Y_t(y),y\geq 0, t\geq 0)$, we have that almost surely,  for all $j\in J$ and for all $y\geq0$,
\beqlb\label{two limits for Y}
Z^Y_j:=\underset{t\rightarrow +\infty}{\lim}e^{-t}G\left(\frac{1}{Y^j_t}\wedge \rho\right)\text{ exists and }
\sup_{y_j\leq y}Z_j^Y=\underset{t\rightarrow +\infty}{\lim}e^{-t}G\left(\frac{1}{Y_t(y)}\wedge \rho\right).\eeqlb
By (\ref{Xbar}), $\bar{X}_t(U_n-)=\sum_{x_i<U_n}\bar{X}^i_t=Y_t(X_{s_l}(U_n-))$ and then
 \beqlb\label{coupling limit 1}
 \underset{t\rightarrow +\infty}{\lim}e^{-(s_l+t)}G\left(\frac{1}{\bar{X}_t(U_n-)}\wedge \rho\right)=e^{-s_l}\sup_{y_j\leq X_{s_l}(U_n-)}Z^Y_j \quad a.s.
 \eeqlb
By (\ref{3.33}), (\ref{Xbar2}) $ \underset{t\rightarrow +\infty}{\lim}e^{-(s_l+t)}G\left(\frac{1}{\bar{X}_t(U_n-)}\wedge \rho\right)\overset{d}{=}\hat{Z}_n$. Therefore by (\ref{coupling limit 1}), \begin{equation}\label{Z-}\hat{Z}_n\overset{d}{=}e^{-s_l}\sup_{y_j\leq X_{s_l}(U_n-)}Z^Y_j.
\end{equation}
By definition of $(\bar{X}^i_t)_{t\geq 0}$, for $i\in I$ such that $\Delta X_{s_l}(x_i)>0$, $\bar{X}^i_t=Y_t(X_{s_l}(x_i))-Y_t(X_{s_l}(x_i-))$. If $\Delta X_{s_l}(x_i)=0$, then we set $\bar{X}^i_t=0$ for all $t\geq 0$. By Lemma \ref{lemma} for any $i\in I$, 
 \beqlb\label{coupling limit 2}
 \bar{Z}_i:=\underset{t\rightarrow +\infty}{\lim}e^{-(s_l+t)}G\left(\frac{1}{\bar{X}^i_t}\wedge \rho\right) \text{ exists a.s. and } \bar{Z}_i\overset{d}{=}e^{-s_l}\sup_{y_j\in(X_{s_l}(x_i-),\;X_{s_l}(x_i)]} Z^Y_j.  
 \eeqlb
Fix $i$. By (\ref{Xbar}), $\bar{X}^i_t\geq Y_t^j$ for any $j\in J$ such that $y_j\in(X_{s_l}(x_i-),\;X_{s_l}(x_i)]$. It follows from the above limit
and (\ref{two limits for Y}) that $\bar{Z}_i\geq e^{-s_l}\sup_{y_j\in(X_{s_l}(x_i-),\;X_{s_l}(x_i)]} Z^Y_j$ a.s. Thus for each $i\in I$, $\bar{Z}_i= e^{-s_l}\sup_{y_j\in(X_{s_l}(x_i-),\;X_{s_l}(x_i)]} Z^Y_j$ a.s. and
\beqnn
e^{-s_l}\sup_{y_j\leq X_{s_l}(U_n-)}Z^Y_j=e^{-s_l}\sup_{x_i<U_n}\sup_{y_j\in(X_{s_l}(x_i-),\;X_{s_l}(x_i)]} Z^Y_j=\sup_{x_i<U_n}\bar{Z}_i, \ a.s.
\eeqnn
By (\ref{coupling}), $\sup_{x_i<U_n}\bar{Z}_i\overset{d}{=}\sup_{x_i<U_n}Z_i=Z(U_n-)$. 
Therefore, (\ref{Z-}) entails that for any $n$, $\hat{Z}_n\overset{d}{=}Z(U_n-)$.
\end{proof}

\begin{lemma}\label{left limit convergence 2} Under the conditions of Lemma \ref{thethreefinitevar}, we have that 
 almost-surely for all $i\in I$, \beqnn
   \underset{t\rightarrow +\infty}{\lim}e^{-t}G\left(\frac{1}{X_t(x_i-)}\wedge \rho\right)=Z(x_i-)=\sup_{0<x_j<x_i}Z_j.
\eeqnn
 \end{lemma}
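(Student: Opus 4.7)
The plan is to transcribe the three-step argument of Lemma \ref{left limit convergence 1} to the finite-variation Poisson construction (\ref{poissonfinitevariation}). The only structural differences are the deterministic drift $e^{-\textbf{d}t}x$ appearing in $X_t(x)$ and the fact that each atom $x_i$ carries an independent birth time $t_i$, which only contributes for $t\geq t_i$. As in Lemma \ref{thethreefinitevar}, I would index the ``significant'' atoms by $I_{l,k}=\{i\in I: t_i\leq s_l,\; X^i_{t_i}>\epsilon_k\}$, and it suffices to establish
$$\mathbb{P}\Big(\text{for all } i\in I_{l,k},\ \lim_{t\to+\infty}e^{-t}G\big(\frac{1}{X_t(x_i-)}\wedge\rho\big)=Z(x_i-)\Big)=1$$
for every fixed $k,l$.

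First I would obtain the almost-sure existence of the limits. For $i\in I_{l,k}$, the branching property at time $s_l$ combined with the independence of the restriction of $\mathcal{N}$ to $[0,x_i)\times\mathbb{R}_+\times\mathcal{D}$ shows that $(X_{s_l+t}(x_i-))_{t\geq0}$ is a CSBP$(\Psi)$ started from $X_{s_l}(x_i-)$. Lemma \ref{lemma} then yields the almost-sure existence of $\hat Z_n:=\lim_{t\to+\infty}e^{-t}G(\frac{1}{X_t(U_n-)}\wedge\rho)$ for every $n$. For the lower bound $\hat Z_n\geq Z(U_n-)$, formula (\ref{poissonfinitevariation}) gives $X_t(U_n-)\geq X^j_{t-t_j}$ whenever $x_j<U_n$ and $t\geq t_j$, so the monotonicity of $G$ and the definition (\ref{threelimitsfinitevar}) of $Z_j$ give $\hat Z_n\geq Z_j$ and hence $\hat Z_n\geq Z(U_n-)$ almost surely.

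The distributional equality $\hat Z_n\overset{d}{=}Z(U_n-)$ would be obtained by the same coupling argument. On an enlarged probability space I would introduce an independent flow $(Y_t(y), t\geq0, y\geq 0)$ built from an independent Poisson point process $\mathcal{N}^Y=\sum_{j\in J}\delta_{(y_j, s_j, Y^j)}$ with intensity $\ddr y\otimes e^{-\textbf{d}t}\ddr t\otimes \int_0^{+\infty}\pi(\ddr r)\mathbb{P}^\Psi_r(\ddr Y)$ and, for each $i\in I$ with $\Delta X_{s_l}(x_i)>0$, set
$$\bar X^i_t:=Y_t(X_{s_l}(x_i))-Y_t(X_{s_l}(x_i-))=e^{-\textbf{d}t}\Delta X_{s_l}(x_i)+\sum_{X_{s_l}(x_i-)<y_j\leq X_{s_l}(x_i)}1_{\{s_j\leq t\}}Y^j_{t-s_j}.$$
The finite-variation analogue of (\ref{coupling}) then reads
$$\sum_{i\in I}1_{\{\Delta X_{s_l}(x_i)>0\}}\delta_{(x_i,\, X^i_{s_l-t_i+\cdot})}\overset{d}{=}\sum_{i\in I}1_{\{\Delta X_{s_l}(x_i)>0\}}\delta_{(x_i,\, \bar X^i_\cdot)},$$
and Lemma \ref{thethreefinitevar} together with Lemma \ref{Zsup} applied to $Y$ yields, after partitioning $(0,X_{s_l}(U_n-)]$ into the slabs $(X_{s_l}(x_i-),X_{s_l}(x_i)]$ exactly as in Lemma \ref{left limit convergence 1}, that $\hat Z_n\overset{d}{=}\sup_{x_i<U_n}Z_i=Z(U_n-)$.

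The main obstacle specific to this setting is to check that the deterministic drift contribution $e^{-\textbf{d}t}X_{s_l}(U_n-)$ leaves no trace on the limit of $e^{-(s_l+t)}G\big(\frac{1}{\bar X_t(U_n-)}\wedge\rho\big)$. This reduces to showing $e^{-t}G(e^{\textbf{d}t}/x)\to 0$, equivalently $v_{-t}(\lambda)e^{-\textbf{d}t}\to 0$ as $t\to+\infty$, which is precisely the estimate established inside the proof of Lemma \ref{thethreefinitevar} from the assumption $\Psi'(0+)=-\infty$. Once this is in hand, the rest of the argument is a purely notational transcription of the infinite-variation proof.
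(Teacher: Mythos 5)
There is a genuine gap, and it lies exactly in the step you call a ``purely notational transcription.'' In the finite-variation case the subordinator $x\mapsto X_{s_l}(x)$ has a strictly positive drift coefficient $e^{-\textbf{d}s_l}$, so the slabs $(X_{s_l}(x_i-),X_{s_l}(x_i)]$, taken over $i$ with $\Delta X_{s_l}(x_i)>0$, do \emph{not} partition $(0,X_{s_l}(U_n-)]$: there is a complementary ``drift set'' of Lebesgue measure $e^{-\textbf{d}s_l}U_n>0$. The $Y$-flow atoms $y_j$ falling in that set are a non-trivial Poisson sample (mean number $e^{-\textbf{d}s_l}U_n$), and the associated $Z^Y_j$ are full-sized, not asymptotically negligible. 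So your decomposition $\sup_{y_j\leq X_{s_l}(U_n-)}Z^Y_j=\sup_{x_i<U_n}\sup_{y_j\in(X_{s_l}(x_i-),X_{s_l}(x_i)]}Z^Y_j$ is false; it silently drops the drift-set atoms. You do notice ``a drift contribution,'' but you treat it purely as the deterministic term $e^{-\textbf{d}t}X_{s_l}(U_n-)$ in $Y_t(X_{s_l}(U_n-))$ and check it vanishes in the renormalization. That estimate is correct but addresses the wrong object: the issue is the \emph{random Poisson} material carried by the drift part of the interval, not the deterministic drift of the single CSBP.

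The mismatch shows up on the other side of the identity as well: your coupling point-process equality is restricted to $\{i:\Delta X_{s_l}(x_i)>0\}=\{i:t_i\leq s_l\}$, whereas $Z(U_n-)=\sup_{x_i<U_n}Z_i$ includes the atoms born after $s_l$. The $y_j$'s in the drift set are exactly what should be matched, in law, with those late-born $x_i$'s — and that matching is precisely what must be set up but is absent in your argument. The paper's coupling handles this by construction: $\bar X^i_t$ is defined \emph{piecewise}, keeping the original path $X^i$ up to time $T_n-t_i$ and, for $i$ born after $T_n$, keeping $\bar X^i=X^i$ entirely; only the alive-at-$T_n$ part of each atom is replaced by a $Y$-increment. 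This yields the full identity $\sum_{i}\delta_{(x_i,t_i,X^i)}\overset{d}{=}\sum_i\delta_{(x_i,t_i,\bar X^i)}$, from which $\sup_{x_i<U_n}\bar Z_i\overset{d}{=}\sup_{x_i<U_n}Z_i$ captures both the replaced alive atoms and the preserved late-born ones. As written, your argument only proves $\hat Z_n\overset{d}{\geq}\sup_{x_i<U_n,\,t_i\leq s_l}Z_i$, which is weaker than what the lemma requires.
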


\begin{proof}  Recall $I_{l,k}:=\{i \in I, t_i\leq s_l, X^{i}_{t_i}> \epsilon_k\}$, $U^{l,k}_n$ and $T_n^{l,k}$ defined in the proof of Lemma \ref{thethreefinitevar}.  For simplicity 
we write $U_n$ and $T_n$ for  $U^{l,k}_n$ and $T_n^{l,k}$. Observe that
\beqnn
\{(x_i,t_i, (X_{t_i+t}(x_i-))_{t\geq 0}), i\in I_{l,k}\}=\{(U_n, T_n, (X_{T_n+t}(U_n-))_{t\geq 0}), n=1,2,\ldots\},
\eeqnn
 where for each $n$, 
\beqlb\label{representation in finite variation}
X_{T_n+t}(U_n-)=e^{-\textbf{d}(t+T_n)}U_n+\sum_{0<x_j<U_n}X^{j}_{t+T_n-t_j}1_{\{t_j<t+T_n\}}
\eeqlb
 is a CSBP$(\Psi)$ with initial value $X_{T_n}(U_{n}-)$.  It follows from Lemma \ref{lemma} that for each $n$,  
 $
e^{-t}G\left(\frac{1}{X_t(U_n-)}\wedge \rho\right)\underset{t\rightarrow +\infty}\longrightarrow \hat{Z}_n\ a.s.
$
As proved in step 1 of Lemma \ref{left limit convergence 1},  it follows from (\ref{representation in finite variation}) and (\ref{threelimitsfinitevar}) that $\hat{Z}_n\geq\sup_{x_i<U_n}Z_i=Z(U_n-)$ a.s. Then we use the coupling method to prove that $\hat{Z}_n$ and $Z(U_n-)$ have the same distributions.
On an extended probability space, let $(Y_t(y), t\geq0, y\geq0)$ be an independent 
copy of $(X_t(x), t\geq0, x\geq0)$ given by 
 \beqnn
  Y_{t}(y)=e^{-\textbf{d}t}y+\sum_{y_{j}\leq y}1_{\{s_j\leq t\}}Y^{j}_{t-s_j}, \ t\geq0,  \ y\geq0,
\eeqnn
where $\mathcal{N}^Y=\sum_{j\in J}\delta_{(y_j, s_j, Y^j)}$ is a Poisson point process over $\mathbb{R}_+\times \mathbb{R}_+\times \mathcal{D}$ with intensity $\ddr y \otimes e^{-\textbf{d}s}\ddr s\otimes \int_{0}^{+\infty}\pi(\ddr r)\mathbb{P}_{r}^{\Psi}(\ddr Y)$.  For $i\in I$ with $t_i\leq T_n$, set
\beqnn
\bar{X}^i_t=X^i_t,\quad\text{if}\ 0\leq t\leq T_n-t_i;
\eeqnn
\beqnn
\bar{X}^i_t= e^{-\textbf{d}(t-T_n+t_i)}\Delta X_{T_n}(x_i)
+\sum_{j\in J_i}1_{\{s_j\leq t-T_n+t_i\}}Y^{j}_{t-T_n+t_i-s_j},\quad \text{if}\ t>T_n-t_i,
\eeqnn
where $J_i=\{j\in J: X_{T_n}(x_i-)<y_j\leq X_{T_n}(x_i)\}$. For $i\in I$ with $t_i>T_n$, set $\bar{X}^i_t=X^i_t$ for $t\geq0$. One has
\beqnn
\sum_{i\in I}\delta_{(x_i, t_i, X^i_{\cdot})}\overset{d}{=} \sum_{i\in I}\delta_{(x_i, t_i, \bar{X}^i_{\cdot})}.\eeqnn
The flow $(\bar{X}_{t}(x), t\geq 0, x\geq 0)=(e^{-\textbf{d}t}x+\sum_{x_{i}\leq x}1_{\{t_i\leq t\}}\bar{X}^{i}_{t-t_i}, t\geq 0, x\geq 0)$ is a flow of CSBPs$(\Psi)$ and one can show that $\hat{Z}_n\overset{d}{=} Z(U_n-)$ as in Step 2 of Lemma  \ref{left limit convergence 1}. \end{proof}
\begin{lemma}\label{superprolific} $\mathcal{S}\cap \mathcal{P}=\{x>0; \Delta Z(x)>0\}$ a.s.
\end{lemma}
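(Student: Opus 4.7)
The plan is to combine the pointwise limits $e^{-t}G(1/X^i_t\wedge\rho)\to Z_i$ from Lemma \ref{thethreeinfinitevar} (resp.\ Lemma \ref{thethreefinitevar}) and $e^{-t}G(1/X_t(x_i-)\wedge\rho)\to Z(x_i-)$ from Lemma \ref{left limit convergence 1} (resp.\ Lemma \ref{left limit convergence 2}) with the slow variation of $G$ at $0$ proved in Lemma \ref{slowvariation}. Since the preceding lemma gives $\mathcal{P}=\{x_i;Z_i>0\}$ a.s., and by definition of the record process $\{x>0;\Delta Z(x)>0\}=\{x_i;Z_i>Z(x_i-)\}$, the statement reduces to proving that for each atom $x_i$ with $Z_i>0$, $x_i\in\mathcal{S}$ if and only if $Z_i>Z(x_i-)$.

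For the \emph{if} direction, assume $Z_i>Z(x_i-)\geq 0$; then $X^i_t\to+\infty$. The process $(X_t(x_i-),t\geq 0)$, conditionally on $x_i$, is a CSBP$(\Psi)$ started from $x_i$ by the Poisson construction of the flow, so by Theorem \ref{Grey}-i) it either becomes extinguished or tends to $+\infty$. In the former case $X^i_t/X_t(x_i-)\to+\infty$ trivially; in the latter I argue by contradiction. If $\liminf_{t\to+\infty}X^i_t/X_t(x_i-)=L<+\infty$, there is a sequence $t_k\to+\infty$ along which
\[
G\bigl(1/X^i_{t_k}\bigr)\leq G\!\left(\frac{1}{(L+1)X_{t_k}(x_i-)}\right),
\]
and the slow variation of $G$ at $0$ ensures that the right-hand side is asymptotically equivalent to $G(1/X_{t_k}(x_i-))$; multiplying by $e^{-t_k}$ and letting $k\to+\infty$ forces $Z_i\leq Z(x_i-)$, contradicting the assumption. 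For the converse I argue the contrapositive. When $Z_i<Z(x_i-)$, the strict inequality in the supremum provides some $j\in I$ with $x_j<x_i$ and $Z_j>Z_i$; then $e^{-t}G(1/X^j_t)\to Z_j>Z_i=\lim e^{-t}G(1/X^i_t)$ implies $G(1/X^j_t)>G(1/X^i_t)$ for $t$ large, hence $X^j_t>X^i_t$ by monotonicity of $G$. Since $X_t(x_i-)\geq X^j_t$, the ratio $X^i_t/X_t(x_i-)$ stays below $1$, so $x_i\notin\mathcal{S}$.

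The main obstacle is the borderline case $Z_i=Z(x_i-)>0$, which neither the slow-variation argument nor the direct comparison can settle: for Neveu's mechanism, for instance, one then only gets $\log X^i_t\sim \log X_t(x_i-)$, which is compatible with any limiting behavior of the ratio. I would rule this out as a null event via the Campbell formula applied to $\mathcal{M}$. Since the intensity $\mu$ has atomless tail $\bar{\mu}=G^{-1}$ (continuity of $G^{-1}$ on $(0,+\infty)$ follows from Lemma \ref{slowvariation}), and the law of $Z(x-)$ has distribution function $F(z)^x=\exp(-xG^{-1}(z))$, which is continuous on $(0,+\infty)$, one obtains
\[
\mathbb{E}\!\left[\#\{i\in I:x_i\leq T,\ Z_i>0,\ Z_i=Z(x_i-)\}\right]=\int_0^T\!\!\ddr x\int_{(0,\infty)}\!\mu(\ddr z)\,\mathbb{P}(Z(x-)=z)=0,
\]
and letting $T\to+\infty$ shows that the equality case is excluded almost surely, completing the dichotomy.
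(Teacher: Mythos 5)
Your proposal is correct and follows essentially the paper's own route: the same inputs (Lemmas \ref{thethreeinfinitevar}/\ref{thethreefinitevar}, the left-limit Lemmas \ref{left limit convergence 1}/\ref{left limit convergence 2}, the identity $\mathcal{P}=\{x_i;\,Z_i>0\}$ and the slow variation of $G$ from Lemma \ref{slowvariation}) combined with the same subsequence/contradiction comparison of $X^i_t$ with $X_t(x_i-)$, the only genuine addition being your Mecke--Campbell computation ruling out the tie $Z_i=Z(x_i-)>0$, a boundary case the paper passes over silently. Two minor remarks: the appeal to Grey's dichotomy via the informal claim that $(X_t(x_i-),t\geq0)$ is, conditionally on $x_i$, a CSBP started from $x_i$ is both delicate (conditioning on a random atom location) and unnecessary, since along any subsequence where the ratio stays bounded one has $X_{t_k}(x_i-)\geq X^i_{t_k}/(L+1)\rightarrow+\infty$, so the slow-variation step applies directly; and your converse only shows the ratio stays below $1$ rather than tends to $0$ as in the paper, which is nevertheless sufficient because membership in $\mathcal{S}$ requires the limit to be $+\infty$.
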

\begin{proof} We focus on the infinite variation case, the proof in the finite variation case is similar by replacing $X^i_t$ by $X^{i}_{t-t_i}$. We first establish that if
$x_j$ and $x_m$ are two individuals such that $Z_j>Z_m$, then $\frac{X^{j}_t}{X^{m}_t}\underset{t\rightarrow +\infty}\longrightarrow +\infty$ a.s. If $Z_m=0$, then clearly $\frac{X^{j}_t}{X^{m}_t}\underset{t\rightarrow +\infty}\longrightarrow +\infty$, as $X^{m}_t \underset{t\rightarrow +\infty}\longrightarrow 0$ and $X^{j}_t \underset{t\rightarrow +\infty}\longrightarrow +\infty$. Assume now $Z_m>0$, 
Recall $G(z)=\exp\left(-\int_{z}^{\lambda_0}\frac{\ddr u}{\Psi(u)}\right)$, by Lemma \ref{thethreeinfinitevar}, 
$$\frac{e^{-t}G(1/X^m_t)}{e^{-t}G(1/X^j_t)}=\exp\left(-\int_{\frac{1}{X_t^m}}^{\frac{1}{X_t^j}}\frac{\ddr u}{\Psi(u)}\right)\underset{t\rightarrow +\infty}\longrightarrow\frac{Z_m}{Z_j}<1 \text{ a.s.}$$
Then, $\underset{t\rightarrow +\infty}{\lim}\int_{\frac{1}{X_t^m}}^{\frac{1}{X_t^j}}\frac{\ddr u}{\Psi(u)}\in (0,+\infty)$. Since $\Psi$ is non-positive in a neighbourhood of $0$, then for $t$ large enough $\frac{1}{X_t^{j}}\leq \frac{1}{X_t^{m}}$. This ensures that $\underset{t\rightarrow +\infty}{\liminf} \frac{X^j_t}{X^m_t}>0$. Assume $\underset{t\rightarrow +\infty}{\liminf} \frac{X^j_t}{X^m_t}=:\theta\in (0,+\infty)$, then there exists some sequence $(t_n)_{n\geq1}$ such that $t_n\underset{n\rightarrow +\infty}\longrightarrow +\infty$ and  $\lim_{n\rightarrow\infty}
\frac{X^j_{t_n}}{X^m_{t_n}}=\theta.$ Since $G$ is slowly varying at $0$, by locally uniform convergence (see Proposition 0.5 in \cite{Res87}) we have that 
\beqnn
\frac{G(\frac{1}{X^m_{t_n}})}{G(\frac{1}{X^j_{t_n}})}=\frac{G(\frac{1}{X^j_{t_n}}\frac{X^j_{t_n}}{X^m_{t_n}})}{G(\frac{1}{X^j_{t_n}})}\underset{n\rightarrow +\infty }{\longrightarrow} 1 \ a.s.
\eeqnn
This entails a contradiction. In conclusion, $\theta=+\infty$ and $\frac{X^{j}_t}{X^{m}_t}\underset{t\rightarrow +\infty}\longrightarrow +\infty$ a.s.  We show now $\mathcal{S}\cap \mathcal{P}=\{x>0; \Delta Z(x)>0\}$. 
\begin{itemize}
\item[-] If $x=x_{m}$ is not a jump of $(Z(x), x\geq 0)$, then $Z_m<\sup_{x_i\leq x_m}Z_i$. Therefore there exists $j$ such that $x_j<x_m$ and $Z_j>Z_m$. Since $\frac{X_t(x_m-)}{X_t^{m}}\geq \frac{X^{j}_t}{X^{m}_t}$, we get $\underset{t\rightarrow +\infty}{\lim}\frac{X_t^{m}}{X_t(x_m-)}=0$, that is to say $x_m$ is not a super-individual and $\mathcal{S}\cap \mathcal{P}\subset \{x>0: \Delta Z(x)>0\}$. 
\item[-] If $x=x_m$ is a jump of $(Z(x), x\geq 0)$, then by Lemma \ref{Zsup}, $Z_m=\sup_{x_i\leq x_m}Z_i$ and $Z_m>0$. By Lemma \ref{left limit convergence 1}, $e^{-t}G\left(1/X_t(x_m-)\wedge \rho\right)\underset{t\rightarrow +\infty}\longrightarrow Z(x_m-)<Z_m$. Therefore, 
$$\frac{G\left(1/X_t(x_m-)\wedge \rho\right)}{G(1/X_t^m)}\underset{t\rightarrow +\infty}\longrightarrow \frac{Z(x_m-)}{Z_m}<1.$$
Using the slow variation property of $G$ as before, we show that $$\underset{t\rightarrow +\infty}{\lim}\frac{X^m_t}{X_t(x_m-)}=+\infty,$$
thus $x_m$ is a super-individual.
\end{itemize}
\end{proof}
By combining Lemma \ref{Zsup} and Lemma \ref{superprolific}, one obtains Theorem \ref{main1}-i).

\section{Subcritical processes}\label{subcritical}
Consider a subcritical or critical CSBP $(X_{t}(x), t\geq 0)$. Since it goes to $0$ almost-surely,  there is no prolific individuals and $\Delta X_{t}(x)$ goes to $0$ with probability $1$ for all $x\geq 0$. If not otherwise specified, we consider $\Psi$ such that $\Psi'(0+)\geq 0$ and $\int^{+\infty}\frac{\ddr u}{\Psi(u)}=+\infty$. As recalled in Theorem \ref{Grey}, the CSBP($\Psi$) goes to zero without being absorbed. We mention that there is no notion of persistence in the framework of discrete state-space branching processes, so that most results in this section have no discrete counterparts. 
Recall the set of super-individuals $$\mathcal{S}:=\left\{x>0; \underset{t\rightarrow +\infty}\lim \frac{\Delta X_{t}(x)}{X_{t}(x-)}=+\infty\right\}.$$
We start with the case of a branching mechanism with finite variation in which no super-individuals exist.
\begin{Prop}[Proposition 2.3 and Lemma 2.4 in \cite{DuqLab}]\label{limit1}\;
Suppose $\textbf{d}\in \mathbb{R}$ and fix $\lambda\in (0,+\infty)$. Consider a flow of CSBPs($\Psi$) $(X_t(x),x\geq 0,t\geq 0)$ defined as in (\ref{poissonfinitevariation}). There exists a c\`adl\`ag subordinator $(V^{\lambda}(x), x\geq 0)$ with Laplace exponent $\theta\mapsto v_{-\frac{\log \theta }{\textbf{d}}}(\lambda)$ such that almost-surely for any $x>0$, $$v_{-t}(\lambda)X_{t}(x) \underset{t\rightarrow +\infty}\longrightarrow V^{\lambda}(x) \text{ and } v_{-t}(\lambda)X_{t}(x-) \underset{t\rightarrow +\infty}\longrightarrow V^{\lambda}(x-).$$ 
Moreover, $(V^{\lambda}(x), x\geq 0)$ has an infinite L\'evy measure and $\mathcal{S}=\emptyset$  almost-surely.
\end{Prop}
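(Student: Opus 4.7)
The plan follows the blueprint of Proposition \ref{finite}, with the subcritical asymptotics of Lemma \ref{v} playing the role of the supercritical ones. Note first that in the subcritical finite variation case the hypothesis $\Psi'(0+)>0$ forces $\textbf{d}>0$, so $v_{-t}(\lambda)\to +\infty$ as $t\to +\infty$ and the quantity $(\log\theta)/\textbf{d}$ appearing below is well-defined.

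For a fixed $x\geq 0$, Grey's martingale (Lemma \ref{martingale}) yields the almost sure existence of a limit $V^\lambda(x)\in [0,+\infty]$ of $v_{-t}(\lambda)X_{t}(x)$. To identify this limit I would compute its Laplace transform as in (\ref{laplaceW}),
\[
\mathbb{E}\bigl[e^{-\theta v_{-t}(\lambda)X_{t}(x)}\bigr] = \exp\bigl(-x v_{t}(\theta v_{-t}(\lambda))\bigr),
\]
and use the semigroup identity to write, as in (\ref{equav}),
\[
\int_{\lambda}^{v_{t}(\theta v_{-t}(\lambda))} \frac{\ddr z}{\Psi(z)} = \int_{v_{-t}(\lambda)}^{\theta v_{-t}(\lambda)} \frac{\ddr z}{\Psi(z)}.
\]
Since $v_{-t}(\lambda)\to +\infty$ and $\Psi(z)/z\to \textbf{d}$ as $z\to +\infty$, the right-hand side converges to $(\log\theta)/\textbf{d}$, so by (\ref{cumulantintegral}) one has $v_{t}(\theta v_{-t}(\lambda))\to v_{-(\log\theta)/\textbf{d}}(\lambda)$ and therefore $\mathbb{E}[e^{-\theta V^\lambda(x)}] = \exp\bigl(-x\, v_{-(\log\theta)/\textbf{d}}(\lambda)\bigr)$, which identifies the one-dimensional marginals.

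To upgrade to a.s.\;convergence jointly in $x$, I would use that for each $t$ the process $x\mapsto v_{-t}(\lambda)X_{t}(x)$ is a non-decreasing c\`adl\`ag process. The a.s.\;limit exists along a dense countable set of $x$; monotonicity then extends it to all $x\geq 0$, and the branching property of the flow forces $(V^\lambda(x),x\geq 0)$ to have independent and stationary increments. Combined with the Laplace transform computed above, this identifies it as a subordinator with Laplace exponent $\theta\mapsto v_{-(\log\theta)/\textbf{d}}(\lambda)$. The convergence $v_{-t}(\lambda)X_{t}(x-)\to V^\lambda(x-)$ would be handled by a coupling argument built on the Poisson construction (\ref{poissonfinitevariation}), along the lines of Lemma \ref{left limit convergence 2}.

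For the last two claims, the drift of $V^\lambda$ equals $d:=\lim_{\theta\to +\infty}\phi(\theta)/\theta = \lim_{s\to +\infty} e^{-\textbf{d}s}v_{-s}(\lambda)$, which by (\ref{vsub}) is a strictly positive constant $C(\lambda)$, matching the contribution of the deterministic term $e^{-\textbf{d}t}x$ in (\ref{poissonfinitevariation}). The L\'evy measure is infinite as soon as $\phi(\theta)-d\theta\to +\infty$ as $\theta\to +\infty$; I would establish this by a finer expansion of $v_{-s}(\lambda)$ beyond the leading order $C(\lambda)e^{\textbf{d}s}$, or equivalently by inspecting the jump intensity of $V^\lambda$ inherited from the atoms of $\mathcal{N}$. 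Finally, since the drift $C(\lambda)$ is positive, one has $V^\lambda(x-)\geq C(\lambda)x>0$ a.s.\;for every $x>0$, and combining with the a.s.\;convergences of $V^\lambda(x)$ and $V^\lambda(x-)$ gives
\[
\frac{\Delta X_{t}(x)}{X_{t}(x-)} \underset{t\to +\infty}{\longrightarrow} \frac{\Delta V^\lambda(x)}{V^\lambda(x-)} < +\infty \quad \text{a.s.},
\]
so no $x>0$ is a super-individual, i.e.\;$\mathcal{S}=\emptyset$ almost surely. The main obstacle I anticipate is the second-order asymptotic of $v_{-s}(\lambda)$ needed to certify that the L\'evy measure is actually infinite rather than merely non-zero; all remaining steps are structurally parallel to the supercritical finite mean case treated in Proposition \ref{finite}.
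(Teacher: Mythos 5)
Your derivation of the Laplace transform of the one-dimensional marginals and the semigroup identity of (\ref{equav}) is correct, and the paper itself simply cites Proposition 2.3 and Lemma 2.4 of \cite{DuqLab} for the a.s.\;convergence and the identification of $(V^\lambda(x),x\geq 0)$, so that part of your sketch is in the right spirit. The problem is in the last paragraph. You assert that the drift of $V^\lambda$ equals $\lim_{s\to+\infty}e^{-\textbf{d}s}v_{-s}(\lambda)$ and that this limit is strictly positive ``by (\ref{vsub})''. Equation (\ref{vsub}) only controls the \emph{ratio} $v_{-t}(\lambda)/v_{-t}(\lambda')$ for two \emph{fixed} levels $\lambda,\lambda'$; it says nothing about the absolute asymptotics $v_{-t}(\lambda)e^{-\textbf{d}t}$. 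Using (\ref{cumulantintegral}) and writing $1/\Psi(u)=(1/\textbf{d}u)\cdot(\textbf{d}u/\Psi(u))$ one finds
\[
e^{-\textbf{d}s}v_{-s}(\lambda)=\lambda\exp\left(-\int_{\lambda}^{v_{-s}(\lambda)}\frac{\textbf{d}u-\Psi(u)}{u\Psi(u)}\,\ddr u\right),
\]
which tends to a \emph{positive} constant if and only if $\int^{+\infty}\frac{\textbf{d}u-\Psi(u)}{u\Psi(u)}\,\ddr u<+\infty$, a condition equivalent (by Fubini) to $\int_{0}r\log(1/r)\,\pi(\ddr r)<+\infty$. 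This can fail while $\int_{0}r\,\pi(\ddr r)<+\infty$ (take $\pi(\ddr r)=r^{-2}(\log(1/r))^{-3/2}\ddr r$ near $0$), so the drift of $V^\lambda$ may be zero and the bound $V^\lambda(x-)\geq C(\lambda)x>0$ is not available in general.

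This matters because your whole argument for $\mathcal{S}=\emptyset$ rests on $V^\lambda(x-)>0$ for all $x>0$ a.s. The correct route, and the one the paper uses (via Lemma 2.4 in \cite{DuqLab}), is that $(V^\lambda(x),x\geq 0)$ has \emph{infinite L\'evy measure}, which forces jump times to be dense and hence $V^\lambda(x-)>0$ for all $x>0$ a.s., regardless of the drift. You explicitly flag the infinite L\'evy measure as the step you could not establish and then substitute the (unsupported) positive-drift claim for it; as a result the final conclusion $\mathcal{S}=\emptyset$ is not proved. To close the gap you need to show that the total mass $\lim_{\theta\to+\infty}(\phi(\theta)-d\theta)$ is infinite, or, more in the spirit of the Poisson construction (\ref{poissonfinitevariation}), that the jump intensity of $V^\lambda$ is $\textbf{d}^{-1}\pi((0,+\infty))$ (times the push-forward of $\mathbb{P}^\Psi_r$ by $r\mapsto\lim_tv_{-t}(\lambda)X_t(r)$), which is the argument used in \cite{DuqLab}.
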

\begin{proof}
We only verify that $\mathcal{S}=\emptyset$. Lemma 2.4 in \cite{DuqLab} entails that for all $x>0$, $\underset{t\rightarrow +\infty}{\lim} \frac{\Delta X_{t}(x)}{X_{t}(x-)}=\frac{\Delta V^{\lambda}(x)}{V^{\lambda}(x-)}<+\infty$. Therefore $\mathcal{S}=\emptyset.$ 
\end{proof}

We study now the subcritical persistent processes with infinite variation.
\begin{theorem}\label{main2}\;
Suppose that $\Psi$ is  (sub)critical, persistent and of infinite variation. Fix some $\lambda_0>0$ and set $G(y)=\exp\left(-\int_{\lambda_0}^{y}\frac{\ddr u}{\Psi(u)}\right)$ for $y\in (0,+\infty)$. Then, for all $x\geq 0$, almost-surely
$$e^{t}G\left(\frac{1}{X_{t}(x)}\right)\underset{t\rightarrow +\infty}\longrightarrow \tilde{Z}(x) \text{ a.s.}$$ 
where $(\tilde{Z}(x), x\geq 0)$ is a positive extremal-$F$ process (in the sense of (\ref{extremalprocess})) with $F(z)=\exp\left(-G^{-1}(z)\right)$ for $z\geq 0$ and $G^{-1}(z)=v_{\log z}(\lambda_0)$ for all $z\geq 0$. 
\end{theorem}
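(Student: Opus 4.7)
The plan is to mimic step by step the argument that established Theorem \ref{main11}, replacing slow variation of $G$ at $0$ by slow variation of $G$ at $+\infty$, and the asymptotics $v_{-t}(\lambda)\to 0$ by $v_{-t}(\lambda)\to +\infty$. First I would check the preparatory properties of $G$: since $\Psi>0$ on $(0,+\infty)$ in the (sub)critical regime, $G$ is strictly decreasing from $G(0+)=+\infty$ (because $\Psi'(0+)\geq 0$ forces $\int_{0}\ddr u/\Psi(u)=+\infty$) to $G(+\infty)=0$ (by persistence), and $G^{-1}(z)=v_{\log z}(\lambda_0)$ follows directly from (\ref{cumulantintegral}). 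Slow variation of $G$ at $+\infty$, i.e.\;$\int_y^{\theta y}\ddr u/\Psi(u)\to 0$ as $y\to+\infty$, is a consequence of $\textbf{d}=\lim_{u\to+\infty}\Psi(u)/u=+\infty$ (infinite variation): the bound $\Psi(u)\geq bu$ on $[y,\theta y]$ for $y$ large gives an integral at most $|\log\theta|/b$, arbitrarily small.

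Next, for any $\lambda>0$ (note that persistence makes $\bar v=+\infty$, so $\lambda\in(0,\bar v)$ automatically), Lemma \ref{martingale} provides the a.s.\;limit $W^\lambda(x):=\lim_{t\to+\infty}v_{-t}(\lambda)X_t(x)\in[0,+\infty]$. Using $\mathbb{E}[e^{-\theta v_{-t}(\lambda)X_t(x)}]=\exp(-xv_t(\theta v_{-t}(\lambda)))$ and the identity $\int_{\lambda}^{v_t(\theta v_{-t}(\lambda))}\ddr z/\Psi(z)=\int_{v_{-t}(\lambda)}^{\theta v_{-t}(\lambda)}\ddr z/\Psi(z)$, slow variation at $+\infty$ yields $v_t(\theta v_{-t}(\lambda))\to\lambda$ for every $\theta>0$, exactly as in Lemma \ref{WW}. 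Hence $\mathbb{E}[e^{-\theta W^\lambda(x)}]=e^{-x\lambda}$ is $\theta$-independent, so $W^\lambda(x)\in\{0,+\infty\}$ with $\mathbb{P}(W^\lambda(x)=0)=e^{-x\lambda}$. Define
$\Lambda_x:=\inf\{q\in\mathbb{Q}_+:W^q(x)=+\infty\}$;
it is $\mathrm{Exp}(x)$-distributed, and $\Lambda_x\in(0,+\infty)$ almost-surely. For $\lambda',\lambda''\in\mathbb{Q}_+$ with $\lambda'<\Lambda_x<\lambda''$, the monotonicity of $q\mapsto v_{-t}(q)X_t(x)$ gives $v_{-t}(\lambda')\leq 1/X_t(x)\leq v_{-t}(\lambda'')$ for $t$ large; since $G$ is decreasing and (\ref{v-}) yields $G(v_{-t}(\lambda))=G(\lambda)e^{-t}$, sandwiching gives $G(\lambda'')\leq e^tG(1/X_t(x))\leq G(\lambda')$. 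Continuity of $G$ and $\lambda',\lambda''\to\Lambda_x$ along rationals furnish $e^tG(1/X_t(x))\to\tilde Z(x):=G(\Lambda_x)$ a.s., with $\mathbb{P}(\tilde Z(x)\leq z)=\mathbb{P}(\Lambda_x\geq G^{-1}(z))=e^{-xG^{-1}(z)}=F(z)^x$ since $G$ is decreasing and $\Lambda_x$ is continuous.

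For the extremal-$F$ property I would verify (\ref{extremalprocess}) on two-dimensional marginals just as in the proof of Theorem \ref{main11}. Fix $0<x_1<x_2$ and $z_1,z_2>0$, set $\lambda_i=G^{-1}(z_i)$; then $\{\tilde Z(x_1)\leq z_1,\,\tilde Z(x_2)\leq z_2\}=\{W^{\lambda_1}(x_1)=0,\,W^{\lambda_2}(x_2)=0\}$. Writing $X_t(x_2)=X_t(x_1)+(X_t(x_2)-X_t(x_1))$ with the two summands independent and exploiting the branching property, the joint Laplace transform factorizes as
$$\exp\!\bigl(-x_1v_t(v_{-t}(\lambda_1)+v_{-t}(\lambda_2))\bigr)\exp\!\bigl(-(x_2-x_1)v_t(v_{-t}(\lambda_2))\bigr).$$
The second factor tends trivially to $\exp(-(x_2-x_1)\lambda_2)$. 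For the first, equation (\ref{vsub}) of Lemma \ref{v} with $\textbf{d}=+\infty$ gives $v_{-t}(\lambda_{\min})/v_{-t}(\lambda_{\max})\to 0$ where $\lambda_{\min}=\lambda_1\wedge\lambda_2$ and $\lambda_{\max}=\lambda_1\vee\lambda_2$, so for any $\theta>1$ the argument of $v_t$ is eventually trapped between $v_{-t}(\lambda_{\max})$ and $\theta v_{-t}(\lambda_{\max})$; the previously established limit $v_t(\theta v_{-t}(\lambda))\to\lambda$ then yields $v_t(v_{-t}(\lambda_1)+v_{-t}(\lambda_2))\to \lambda_{\max}$. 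The joint law becomes $e^{-x_1G^{-1}(z_1\wedge z_2)}e^{-(x_2-x_1)G^{-1}(z_2)}=F^{x_1}(z_1\wedge z_2)F^{x_2-x_1}(z_2)$, matching (\ref{extremalprocess}). The $n$-dimensional case is identical.

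The main obstacle I anticipate is step two: obtaining genuine a.s.\;convergence of $e^tG(1/X_t(x))$, not merely convergence in distribution. The sandwich scheme must be run on a single almost-sure event, which forces the simultaneous a.s.\;convergence of Grey's martingale for every rational $\lambda$, and relies on the monotonicity of $q\mapsto W^q(x)$ together with the fact that $\Lambda_x$ avoids $0$ and $+\infty$ a.s.; the latter follows from $\lim_{\lambda\to 0}\mathbb{P}(W^\lambda(x)=+\infty)=0$ and $\lim_{\lambda\to+\infty}\mathbb{P}(W^\lambda(x)=0)=0$. Everything else is a straightforward transcription of the supercritical argument with the sign conventions adapted to a decreasing $G$ on $(0,+\infty)$.
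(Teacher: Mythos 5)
Your proposal is correct and follows essentially the same route as the paper: the paper's own proof of Theorem \ref{main2} is exactly this transcription of the argument for Theorem \ref{main11}, using slow variation of $G$ at $+\infty$ (from $\textbf{d}=+\infty$), Grey's martingale with $\bar v=+\infty$, the dichotomy $W^\lambda(x)\in\{0,+\infty\}$ via $v_t(\theta v_{-t}(\lambda))\to\lambda$, the sandwich through the random threshold $\Lambda_x$ with $G(v_{-t}(\lambda))=G(\lambda)e^{-t}$, and the two-dimensional marginal computation via (\ref{vsub}). The points you flag as delicate (simultaneous a.s.\ convergence over rational $\lambda$ and $\Lambda_x\in(0,+\infty)$ a.s.) are handled in the paper in exactly the way you indicate.
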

\begin{example} \label{example2} Consider the mechanism $\Psi(u)=(u+1)\log(u+1)$. Fix $\lambda_0=e-1$, then for all $z\geq 0$, $G(z)=\frac{1}{\log(1+z)}$ and $G^{-1}(z)=e^{1/z}-1$. The process $(\tilde{Z}(x), x\geq 0)$ is an extremal-$F$ process with $F(z)=e^{1-e^{1/z}}$ for all $z\geq 0$.
\end{example}

\begin{lemma} If $\textbf{d}=+\infty$ and $\int^{+\infty}\frac{\ddr u}{\Psi(u)}=+\infty$, then the map $G:y\mapsto \exp\left(-\int_{\lambda_0}^{y}\frac{\ddr u}{\Psi(u)}\right)$ is continuous, non-increasing, goes from $[0,+\infty]$ to $[0,+\infty]$ and is slowly varying at $+\infty$. Moreover $G^{-1}(z)=v_{\log z}(\lambda_0)$ for $z\in [0,+\infty]$.
\end{lemma}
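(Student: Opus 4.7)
The plan is to mirror the proof of Lemma \ref{slowvariation} (the supercritical counterpart), exploiting the fact that in the subcritical/critical persistent case $\Psi$ is strictly positive on $(0,+\infty)$ by convexity, together with $\textbf{d}=+\infty$ and $\int^{+\infty}\tfrac{du}{\Psi(u)}=+\infty$.

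First, I would check the elementary analytic properties. Since $\Psi>0$ on $(0,+\infty)$, the integrand $1/\Psi(u)$ is continuous, and $G$ is $C^1$ on $(0,+\infty)$ with $G'(y)=-G(y)/\Psi(y)<0$, hence strictly decreasing. For the boundary values, the assumption $\int^{+\infty}\tfrac{du}{\Psi(u)}=+\infty$ gives $G(y)\to 0$ as $y\to+\infty$; for the behavior at $0$, convexity of $\Psi$ with $\Psi(0)=0$ and $\Psi'(0+)\geq 0$ implies $\Psi(u)\leq \Psi(\lambda_0)u/\lambda_0$ on $(0,\lambda_0)$, so $\int_0 \tfrac{du}{\Psi(u)}=+\infty$ and therefore $G(y)\to+\infty$ as $y\to 0$. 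Together with continuity and monotonicity this yields that $G$ is a homeomorphism from $[0,+\infty]$ to $[0,+\infty]$.

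Next, I would derive the identification $G^{-1}(z)=v_{\log z}(\lambda_0)$ directly from Grey's integral equation (\ref{cumulantintegral}). Indeed $G(y)=z$ is equivalent to $\int_{y}^{\lambda_0}\tfrac{du}{\Psi(u)}=\log z$, and the formula (\ref{cumulantintegral}), extended to negative times via $v_{-t}=v_t^{-1}$ as in Lemma \ref{v}, tells us that $y=v_{\log z}(\lambda_0)$ is the unique solution. One has to check both signs of $\log z$: when $z>1$ the value $v_{\log z}(\lambda_0)<\lambda_0$ corresponds to positive time in the CSBP going to $0$, and when $z<1$ one uses $v_{-t}$, which in the (sub)critical case is well-defined on $(0,+\infty)$ and tends to $+\infty$ as $t\to+\infty$.

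For the slow variation at $+\infty$, I would use the hypothesis $\textbf{d}=+\infty$ in exactly the same way Grey's supercritical case uses $\Psi'(0+)=-\infty$. Fix $\theta>0$. Since $\Psi(u)/u\to+\infty$ as $u\to+\infty$, for every $b>0$ there exists $y_b$ such that $\Psi(u)\geq bu$ for all $u\geq y_b\wedge (\theta y_b)$, whence
\[
\left|\int_{y}^{\theta y}\frac{du}{\Psi(u)}\right|\leq \left|\int_{y}^{\theta y}\frac{du}{bu}\right|=\frac{|\log\theta|}{b}
\]
for $y$ large enough. Letting $b\to+\infty$ gives $\int_y^{\theta y}\tfrac{du}{\Psi(u)}\to 0$ as $y\to+\infty$, and consequently $G(\theta y)/G(y)=\exp\bigl(-\int_y^{\theta y}\tfrac{du}{\Psi(u)}\bigr)\to 1$, which is slow variation.

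The argument is essentially a mechanical transcription of Lemma \ref{slowvariation}, with the roles of $0$ and $+\infty$ interchanged and the supercritical hypothesis $\Psi'(0+)=-\infty$ replaced by $\textbf{d}=+\infty$. The only mild subtlety—hardly an obstacle—is to verify carefully that the formula $G^{-1}(z)=v_{\log z}(\lambda_0)$ is valid on the whole of $[0,+\infty]$ by interpreting $v_t$ for negative $t$ through the inverse $v_{-t}$, and to check that the boundary values $z=0$ and $z=+\infty$ correspond respectively to $v_{-\infty}(\lambda_0)=+\infty$ and $v_{+\infty}(\lambda_0)=0$, both of which follow from persistence and from $\textbf{d}=+\infty$ via the convergence $v_{-t}(\lambda)\to+\infty$ of Lemma \ref{v}.
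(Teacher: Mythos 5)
Your proposal is correct and follows essentially the same route as the paper: continuity and monotonicity from positivity of $\Psi$ on $(0,+\infty)$, the boundary limits from the divergence of $\int^{+\infty}\frac{\ddr u}{\Psi(u)}$ and $\int_0\frac{\ddr u}{\Psi(u)}$, and slow variation at $+\infty$ from $\textbf{d}=+\infty$ via the bound $\Psi(u)\geq bu$ for large $u$. The only additions are welcome details the paper leaves implicit, namely the convexity bound $\Psi(u)\leq \Psi(\lambda_0)u/\lambda_0$ giving $\int_0\frac{\ddr u}{\Psi(u)}=+\infty$ and the verification of $G^{-1}(z)=v_{\log z}(\lambda_0)$ through (\ref{cumulantintegral}) and (\ref{v-}), exactly as done for the supercritical analogue in Lemma \ref{slowvariation}.
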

\begin{proof} The map $G$ is continuous, non-increasing and maps $(0,+\infty)$ to $(0,+\infty)$. Indeed $\Psi$ is continuous positive on $(0,+\infty)$ and $\lim_{y\rightarrow+\infty}G(y)=0$, since $\int_{\lambda_0}^{+\infty} \frac{1}{\Psi(z)}dz=+\infty$, $\lim_{y\rightarrow 0}G(y)=+\infty$ since 
$\int_0^{\lambda_0} \frac{\ddr z}{\Psi(z)}=+\infty$. Let $\delta>0$, since $\int_0^1u \pi(\ddr u)=+\infty$, 
$\textbf{d}:=\lim_{z\rightarrow+\infty}\Psi(z)/z=+\infty$ and for any $b>0$, $\Psi(z)\geq bz$ for large enough $z$. Therefore
$\left\lvert \int_x^{\delta x}\frac{\ddr z}{\Psi(z)}\right\lvert \leq \frac{|\log(\delta)|}{b}.$ Since $b$ is arbitrarily large, \begin{equation}\label{slowvarinf} \underset{x\rightarrow +\infty}{\lim} \int_x^{\delta x}\frac{\ddr z}{\Psi(z)}=0 
\end{equation}
and the map $G$ is slowly varying at $+\infty$.\end{proof}

\noindent
The proof is similar to that of Theorem \ref{main11}. We provide some details for sake of completeness.

\begin{proof}[Proof of Theorem \ref{main2}].
Under the assumption $\int^{+\infty}\frac{\ddr u}{\Psi(u)}=+\infty$, $\bar{v}=+\infty$ and by applying Lemma \ref{martingale}, we have that for a fixed $\lambda>0$ and any $x\geq0$, $(v_{-t}(\lambda)X_{t}(x), t\geq 0)$ converges $\mathbb{P}_x$-almost-surely,  as $t\rightarrow+\infty$, to some random variable $V^{\lambda}(x)$ taking value in $\bar{\mathbb{R}}_{+}$. Let $\delta>0$, (\ref{equav}) yields 
\beqnn\lim_{t\rightarrow +\infty} \left| \int_{\lambda}^{v_t(\delta v_{-t}(\lambda))}\frac{\ddr z}{\Psi(z)} \right| = \lim_{t\rightarrow +\infty}\left| \int_{v_{-t}(\lambda)}^{\delta v_{-t}(\lambda)}\frac{\ddr  z}{\Psi(z)}\right|.
\eeqnn
Recall that $\lim_{t\rightarrow+\infty}v_{-t}(\lambda)=+\infty$, (\ref{slowvarinf}) implies that $\lim_{t\rightarrow+\infty}v_{t}(\delta v_{-t}(\lambda))=\lambda,$ for any $\delta>0$. We have
\begin{align*}
\lim_{t\rightarrow+\infty}\mathbb{E}[e^{-\delta v_{-t}(\lambda)X_{t}(x)}]&=\lim_{t\rightarrow+\infty}\exp\big(-xv_{t}(\delta v_{-t}(\lambda))\big)
=e^{-x\lambda}.
\end{align*}
The limit does not depend on $\delta$, therefore $(v_{-t}(\lambda)X_{t}(x), t\geq 0)$ converges either to $0$ or $+\infty$ and $\mathbb{P}(V^{\lambda}(x)=0)=e^{-x\lambda}.$ 
For any $x\geq 0$, by considering the collection of random variables $V^{\lambda}(x)$, one can define the random variable $$\Lambda_x:=\inf\{\lambda\in (0,+\infty)\cap \mathbb{Q}: V^{\lambda}(x)=+\infty\}.$$ 
Note that $\mathbb{P}(\exists\;\lambda>0 \ \mbox{such that}\ V_x^\lambda=0)=1$, which implies that $\Lambda_x>0$ a.s.
Similarly as in (\ref{0-infty}), one can show that almost-surely
\beqlb\label{0-infty_sub}
v_{-t}(\lambda)X_{t}(x)\underset{t\rightarrow +\infty}\longrightarrow  \begin{cases} 0 & \text{ if } \Lambda_x>\lambda\\
+\infty &\text{ if }\Lambda_x<\lambda.
\end{cases}
\eeqlb
Choose $\lambda'$ and $\lambda''$ such that $\lambda'<\Lambda_x<\lambda''$. By  (\ref{0-infty_sub}), for large enough $t$, 
 \begin{center}
$v_{-t}(\lambda'')X_{t}(x)\geq 1$ and $0<v_{-t}(\lambda')X_{t}(x)\leq 1$.
\end{center}
Since $G(v_{-t}(\lambda'))=e^{-\int^{\lambda'}_{\lambda_0}\frac{d x}{\Psi(x)}}e^{-t}=G(\lambda')e^{-t}$ then
$$G(\lambda')\geq e^{t}G(1/X_{t}(x))\geq G(\lambda'').$$
Since $\lambda'$ and $\lambda''$ are arbitrarily close to $\Lambda_x$, and $G$ is continuous, we get $$e^{t}G(1/X_{t}(x)) \underset{t\rightarrow +\infty}\longrightarrow G(\Lambda_x)\quad \mathbb{P}\text{-almost surely.}$$
Let $\lambda_1 \neq \lambda_2$ be two positive real numbers, By Lemma \ref{v}, $\lim_{t\rightarrow+\infty}v_{-t}(\lambda_2)/v_{-t}(\lambda_1)=0$ if $\lambda_1>\lambda_2$ and $+\infty$ otherwise. Thus $\lim_{t\rightarrow+\infty} v_{t}(v_{-t}(\lambda_1)+v_{-t}(\lambda_2))= \lambda_1 \vee \lambda_2$. We conclude by applying the same arguments as at the end of the proof of Theorem \ref{main2}. \end{proof}

The following proposition is obtained similarly as Proposition \ref{superexponential1}
\begin{Prop}\label{superexponential2} 
Assume $\textbf{d}=+\infty$ and $\int^{+\infty}\frac{\ddr u}{\Psi(u)}=+\infty$. If there are $\lambda>0$ and $\alpha>0$ such that $\left\lvert\int_{\lambda}^{+\infty}\left(\frac{1}{\Psi(u)}-\frac{1}{\alpha u \log u}\right)\ddr u \right\lvert<+\infty,$
then $G(1/y)\underset{y\rightarrow 0}{\sim} k_{\lambda}\left(\log 1/y\right)^{-1/\alpha}$ for a constant $k_{\lambda}>0$. Fix $x>0$, then \[\log X_t(x)\underset{t\rightarrow +\infty}{\sim} -e^{\alpha t}k_{\lambda}^{\alpha}\tilde{Z}(x)^{-\alpha} \text{ a.s.}\]
\end{Prop}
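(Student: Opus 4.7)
The plan is to mirror the proof of Proposition \ref{superexponential1}, with the two integrals now running to $+\infty$ rather than to $0$. The slow-variation asymptotic will follow from the assumed integrability of $\frac{1}{\Psi(u)}-\frac{1}{\alpha u\log u}$ near $+\infty$, and the a.s.\ equivalent for $\log X_t(x)$ will come by substitution into Theorem \ref{main2}.

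First I would isolate the tail behaviour of $G$ at $0$. For $y$ small enough that $1/y>\lambda_0$, write
\begin{equation*}
-\int_{\lambda_0}^{1/y}\frac{\ddr u}{\Psi(u)}
=-\int_{\lambda_0}^{1/y}\left(\frac{1}{\Psi(u)}-\frac{1}{\alpha u\log u}\right)\ddr u
-\frac{1}{\alpha}\bigl(\log\log(1/y)-\log\log\lambda_0\bigr),
\end{equation*}
where the antiderivative of $1/(\alpha u\log u)$ is $\frac{1}{\alpha}\log\log u$ (valid for $u$ large, which is what we need by taking $\lambda_0$ larger if necessary). By the hypothesis, the first integral has a finite limit $C_0:=-\int_{\lambda_0}^{+\infty}\left(\frac{1}{\Psi(u)}-\frac{1}{\alpha u\log u}\right)\ddr u$ as $y\to 0$. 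Exponentiating gives
\begin{equation*}
G(1/y)=\exp\!\left(C_0+\frac{1}{\alpha}\log\log\lambda_0+o(1)\right)(\log(1/y))^{-1/\alpha},
\end{equation*}
hence $G(1/y)\sim k_\lambda(\log(1/y))^{-1/\alpha}$ with $k_\lambda:=\exp\!\bigl(C_0+\frac{1}{\alpha}\log\log\lambda_0\bigr)\in(0,+\infty)$. This is the first assertion.

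For the second, I would invoke Theorem \ref{main2}: almost surely $e^tG(1/X_t(x))\to\tilde{Z}(x)$, and since $\Psi$ is (sub)critical and persistent one has $X_t(x)\to 0$ a.s., so $1/X_t(x)\to+\infty$. Substituting the equivalent established just above,
\begin{equation*}
e^t\,k_\lambda\bigl(-\log X_t(x)\bigr)^{-1/\alpha}(1+o(1))\underset{t\to+\infty}{\longrightarrow}\tilde Z(x)\quad\text{a.s.}
\end{equation*}
On the event $\{\tilde Z(x)>0\}$ this rearranges to $-\log X_t(x)\sim e^{\alpha t}k_\lambda^{\alpha}\tilde Z(x)^{-\alpha}$, i.e.\ $\log X_t(x)\sim -e^{\alpha t}k_\lambda^{\alpha}\tilde Z(x)^{-\alpha}$. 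Because Theorem \ref{main2} identifies $\tilde Z(x)$ with the supremum of atoms of a Poisson point process on $(0,+\infty)$ with an infinite intensity, $\tilde Z(x)>0$ a.s., so the substitution is justified on a full-probability event.

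The only real subtlety, and the one to handle carefully, is the substitution step: one needs to verify that the $(1+o(1))$ factor coming from the asymptotic $G(1/y)\sim k_\lambda(\log(1/y))^{-1/\alpha}$ can be read off pointwise along the random path $y=X_t(x)$. Since $X_t(x)\to 0$ a.s.\ and the asymptotic is deterministic (a statement about the function $G$ near $0$), this is immediate once one knows $\tilde Z(x)>0$, which as noted is guaranteed by the infinite intensity of the Poisson point process underlying $\tilde Z$. No other obstacle arises, and the proof is essentially a one-line rerun of the argument of Proposition \ref{superexponential1} with the roles of $0$ and $+\infty$ swapped.
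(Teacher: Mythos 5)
Your proof is correct and takes essentially the same route as the paper, which simply obtains this proposition by rerunning the argument of Proposition \ref{superexponential1}: split off the $\frac{1}{\alpha u\log u}$ term to get $G(1/y)\sim k_\lambda(\log 1/y)^{-1/\alpha}$ as $y\to 0$, then substitute into the almost-sure convergence of Theorem \ref{main2}. Your added remark that $\tilde{Z}(x)>0$ a.s.\ (equivalently $F(0)=0$ in the subcritical infinite-variation case), which justifies the rearrangement, is the only detail the paper leaves implicit.
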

A natural extremal process associated to a flow of non-persistent subcritical CSBPs is given by the extinction times of each initial individuals. The proof of the following proposition is straightforward.
\begin{Prop}[Theorem 0.3-i) in \cite{DuqLab} for $\zeta=\zeta_0$ ]\label{flownonpersistent}
Assume $\Psi'(0)\geq 0$ and $\int^{+\infty}\frac{\ddr u}{\Psi(u)}<+\infty$, all individuals living at time $0$ get absorbed in $0$. For all $x\geq 0$, set $Z(x):=\inf\{t\geq 0; X_t(x)=0\}$. The process $(Z(x), x\geq 0)$ is an extremal-$F$ process, with $F(z)=\exp\left(-\bar{v}_{z}\right)$. Let $Z_i$ be the life-length of $X^i$. The point process $\mathcal{M}:=\sum_{i\in I}\delta_{(x_i,Z_i)}$ is a Poisson point process with intensity $\ddr x\otimes \mu(\ddr z)$ with $\bar{\mu}(z)=\overline{v}_{z}$. Moreover, almost-surely for all $x\geq 0$, $Z(x)=\sup_{x_{i}\leq x} Z_i$
and
$$\mathcal{S}:=\left\{x>0; \exists t>0; \Delta X_t(x)>0 \text{ and } X_{t}(x-)=0 \right\}=\{x>0; \Delta Z(x)>0\}.$$
\end{Prop}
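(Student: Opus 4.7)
The plan is to derive everything directly from the Poisson construction of the flow. Under the hypotheses $\Psi'(0+) \geq 0$ and $\int^{+\infty} \frac{\ddr u}{\Psi(u)} < +\infty$, $\Psi$ must be of infinite variation (finite variation would force $\int^{+\infty} \frac{\ddr u}{\Psi(u)} = +\infty$), so the flow is given by (\ref{poissoninfinitevariation}) with $\mathcal{N} = \sum_i \delta_{(x_i, X^i)}$ a Poisson point process of intensity $\ddr x \otimes N_\Psi(\ddr X)$.

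To show $(Z(x), x \geq 0)$ is an extremal-$F$ process with $F(z) = e^{-\bar{v}_z}$, I would verify (\ref{maxid}). The one-dimensional law follows from Theorem \ref{Grey}-ii): $\mathbb{P}(Z(x) \leq z) = \mathbb{P}(X_z(x) = 0) = e^{-x \bar{v}_z} = F(z)^x$. The max-stability comes from the branching property: $(X_t(x+y) - X_t(x), t \geq 0)$ is an independent CSBP$(\Psi)$ started from $y$, and since $0$ is absorbing, $X_t(x+y) = 0$ if and only if $X_t(x) = 0$ and $X_t(x+y) - X_t(x) = 0$, giving $Z(x+y) = Z(x) \vee Z'(y)$ with $Z'(y) \overset{d}{=} Z(y)$ independent of $(Z(u), u \leq x)$.

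For the Poisson structure of $\mathcal{M}$, I would apply the mapping theorem to the functional $X \mapsto \inf\{t; X_t = 0\}$, which sends $\mathcal{N}$ to a Poisson point process $\mathcal{M}$ on $\mathbb{R}_+ \times (0, +\infty)$ with intensity $\ddr x \otimes \mu(\ddr z)$, where $\mu$ is the pushforward of $N_\Psi$; by (\ref{clustermeasure}),
$$\bar{\mu}(z) = N_\Psi(Z > z) = N_\Psi(X_z > 0) = \lim_{\lambda \to +\infty} v_z(\lambda) = \bar{v}_z.$$
The identity $Z(x) = \sup_{x_i \leq x} Z_i$ then holds simultaneously in $x$: for every $x$, the representation $X_t(x) = \sum_{x_i \leq x} X^i_t$ together with the absorbing character of $0$ yields $X_t(x) \geq X^i_t > 0$ for $t < Z_i$ whenever $x_i \leq x$, while $X_t(x) = 0$ for $t > \sup_{x_i \leq x} Z_i$. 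The supremum is attained because $\mathcal{M}([0, x] \times (z, +\infty)) = x\bar{v}_z < +\infty$ a.s.

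Finally, the description of $\mathcal{S}$ reduces to bookkeeping. The jump times of the non-decreasing extremal process $Z$ are exactly those $x_i$ with $Z_i > Z(x_i-) = \sup_{x_j < x_i} Z_j$. At such an $x_i$, any $t \in (Z(x_i-), Z_i)$ satisfies $X_t(x_i-) = 0$ (all atoms below $x_i$ are extinct by then) while $\Delta X_t(x_i) = X^i_t > 0$; conversely, the existence of such a $t$ forces $Z_i > t \geq \sup_{x_j < x_i} Z_j$, so $\Delta Z(x_i) > 0$. The only mildly delicate point is ensuring these identities hold simultaneously in $x$ on a single full-measure event, which follows from the monotonicity and right-continuity of both $Z$ and $x \mapsto \sup_{x_i \leq x} Z_i$ combined with the a.s. local finiteness of $\mathcal{M}$ above any positive height.
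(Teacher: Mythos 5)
Your proof is correct and takes essentially the approach the paper intends: the paper omits the argument as ``straightforward,'' and your steps mirror its proof of the analogous Proposition \ref{explosiveflow} --- max-stability (\ref{maxid}) via the branching property and absorption at $0$, the Poisson structure of $\mathcal{M}$ via the cluster measure $N_\Psi$, the identification $Z(x)=\sup_{x_i\leq x}Z_i$ from the Poisson representation, and the jump description of $\mathcal{S}$. Your direct evaluation $\bar{\mu}(z)=N_\Psi(Z>z)=N_\Psi(X_z>0)=\lim_{\lambda\to+\infty}v_z(\lambda)=\bar{v}_z$ is in fact slightly cleaner than the $s\to 0$ limiting computation used there.
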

\begin{example} Let $\alpha\in (0,1]$ and $\Psi(u)=\alpha u^{\alpha+1}$. Consider a flow of CSBPs$(\Psi)$ $(X_t(x), x\geq 0, t\geq 0)$. By Proposition \ref{flownonpersistent} and Theorem \ref{Grey}-ii), the process $(Z(x), x\geq 0)$ is an extremal-$F$ process with $F$ the probability distribution function of a Fr\'echet law with parameter $\frac{1}{\alpha}\in [1,+\infty)$, that is to say $F(z)=e^{-z^{-\frac{1}{\alpha}}}$ for all $z\geq 0$.
\end{example}
\subsection{Proof of Theorem \ref{main1}-ii)}
In the subcritical case with infinite variation, one always has $F(0)=0$. The state $0$ is therefore instantaneous for the process $(Z(x), x\geq 0)$. We work now with $\Psi$ subcritical persistent with infinite variation and consider a flow of CSBPs$(\Psi)$ as in (\ref{poissoninfinitevariation}). The following lemmas can be established by applying \textit{verbatim} the proofs of the supercritical case. 

\begin{lemma}\label{lemmasub} Consider on the same probability space $(X_t, t\geq 0)$ and $(Y_t, t\geq 0)$ two independent CSBPs$(\Psi)$, satisfying the conditions of Theorem \ref{main2} starting from $X_0$ and $Y_0$. Then, almost-surely
$$e^{t}G\left(\frac{1}{X_t}\right)\underset{t\rightarrow +\infty}\longrightarrow \Gamma_1,\ e^{t}G\left(\frac{1}{Y_t}\right)\underset{t\rightarrow +\infty}\longrightarrow \Gamma_2 \text{ and } e^{t}G\left(\frac{1}{X_t+Y_t}\right)\underset{t\rightarrow +\infty}\longrightarrow \Gamma_1\vee \Gamma_2,$$
with $\Gamma_1$ and $\Gamma_2$ independent such that $\Gamma_1\neq \Gamma_2 \text{ a.s.}$ and
$$\Gamma_1\overset{d}{=}\bar{Z}(X_0),\quad  \Gamma_2\overset{d}{=}\bar{Z}(Y_0) \text{ and }\Gamma_1 \vee \Gamma_2 \overset{d}{=}\bar{Z}(X_0+Y_0),$$ where $(\bar{Z}(x), x\geq0)$ is an extremal-$F$ process, independent of $X_0$ and $Y_0$. 
\end{lemma}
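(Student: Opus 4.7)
The strategy is to mirror the proof of Lemma \ref{lemma}, substituting Theorem \ref{main2} for Theorem \ref{main11} and tracking the sign changes coming from the subcritical regime (where $v_{-t}(\lambda)\to+\infty$ as $t\to+\infty$). First, I introduce an extremal-$F$ process $(\bar Z(x),x\geq0)$ independent of $X_0,Y_0$. By the branching property, $(X_t+Y_t,t\geq0)$ is a CSBP$(\Psi)$ started from $X_0+Y_0$, and Theorem \ref{main2} applied to each of the three CSBPs $(X_t)$, $(Y_t)$ and $(X_t+Y_t)$ produces almost-sure limits $\Gamma_1,\Gamma_2,\Gamma_3$ whose conditional laws given $(X_0,Y_0)$ coincide respectively with those of $\bar Z(X_0),\bar Z(Y_0),\bar Z(X_0+Y_0)$. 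Conditional independence of $(X_t)$ and $(Y_t)$ given $(X_0,Y_0)$ transfers to $\Gamma_1$ and $\Gamma_2$.

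Next I show $\Gamma_3=\Gamma_1\vee\Gamma_2$ a.s. The max-infinite-divisibility identity (\ref{maxid}) already gives $\bar Z(X_0+Y_0)\stackrel{d}{=}\bar Z(X_0)\vee\bar Z'(Y_0)$, hence the distributional identity $\Gamma_3\stackrel{d}{=}\Gamma_1\vee\Gamma_2$. Since $G$ is non-increasing and $\frac{1}{X_t+Y_t}\leq\frac{1}{X_t}\wedge\frac{1}{Y_t}$,
\[
e^{t}G\!\left(\tfrac{1}{X_t+Y_t}\right)\geq e^{t}G\!\left(\tfrac{1}{X_t}\right)\vee e^{t}G\!\left(\tfrac{1}{Y_t}\right),
\]
which passes to the limit to give $\Gamma_3\geq\Gamma_1\vee\Gamma_2$ a.s. Equality in distribution together with this a.s.\ inequality forces the a.s.\ equality $\Gamma_3=\Gamma_1\vee\Gamma_2$.

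The genuinely new point compared with Lemma \ref{lemma} is the strict separation $\Gamma_1\neq\Gamma_2$ a.s., and I expect this to be the main (mild) obstacle. Conditionally on $(X_0,Y_0)$, the random variable $\Gamma_i$ has distribution function $z\mapsto\exp(-X_iG^{-1}(z))$ with $G^{-1}(z)=v_{\log z}(\lambda_0)$. In the subcritical case, Lemma \ref{v} gives $v_{-t}(\lambda_0)\to+\infty$ as $t\to+\infty$, so $G^{-1}(0+)=+\infty$, and $G^{-1}$ is continuous and strictly decreasing on the whole of $(0,+\infty)$. Consequently the conditional law of each $\Gamma_i$ is continuous (with no atom even at $0$, in contrast to the supercritical case), and the conditional independence of $\Gamma_1,\Gamma_2$ then yields $\mathbb P(\Gamma_1=\Gamma_2\mid X_0,Y_0)=0$; integrating concludes. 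This absence of a zero atom is precisely what strengthens the conclusion here, since in Lemma \ref{lemma} equality could occur on the event $\{\Gamma_1=\Gamma_2=0\}$ of positive probability.
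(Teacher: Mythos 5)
Your proposal is correct and follows essentially the same route as the paper, whose proof of Lemma \ref{lemmasub} simply transposes that of Lemma \ref{lemma}: branching property plus Theorem \ref{main2} for the three almost-sure limits, the max-identity (\ref{maxid}) combined with the monotonicity of $G$ to upgrade $\Gamma_3\overset{d}{=}\Gamma_1\vee\Gamma_2$ and $\Gamma_3\geq\Gamma_1\vee\Gamma_2$ a.s.\ into an a.s.\ equality, and atomlessness of the conditional laws for the separation. You also correctly identified the one genuine difference from the supercritical case, namely that $G^{-1}(0+)=\lim_{t\to+\infty}v_{-t}(\lambda_0)=+\infty$ so $F(0)=0$ and there is no atom at $0$ (granted, as in the paper's intended use, that $X_0,Y_0>0$), which is exactly what yields $\Gamma_1\neq\Gamma_2$ a.s.
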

\begin{proof} The proof is similar as that of Lemma \ref{lemma}. 
\end{proof}
We gather here in a single lemma the analogues of Lemma \ref{thethreeinfinitevar}, Lemma \ref{Zsup} and Lemma \ref{left limit convergence 1} for the subcritical case. 
\begin{lemma}\label{thethreeinfinitevarsub} 
Almost-surely for all $i\in I$, the following limit exists
\begin{equation}\label{threelimitssub}
Z_i:=\underset{t\rightarrow +\infty}{\lim}e^{t}G\left(\frac{1}{X^i_t}\right).
\end{equation}
The point process 
$\mathcal{M}:=\sum_{i\in I}\delta_{(x_i, Z_i)}$  is a Poisson point process with intensity $\ddr x\otimes \mu(\ddr z)$ where $\bar{\mu}(z)=G^{-1}(z)$. For any $x\geq 0$, almost-surely $$\forall y\geq x, \underset{t\rightarrow +\infty}{\lim} e^{t}G\left(\frac{1}{X_t(y)-X_t(x)}\right)=Z(x,y):=\underset{x<x_i\leq y}{\sup}Z_i.$$
Moreover, almost-surely for all $i\in I$,
$$\underset{t\rightarrow +\infty}{\lim}e^{t}G\left(\frac{1}{X_t(x_i-)}\right)=Z(x_i-).$$
\end{lemma}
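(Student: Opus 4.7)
The plan is to adapt the three supercritical proofs (Lemmas~\ref{thethreeinfinitevar}, \ref{Zsup} and \ref{left limit convergence 1}) essentially verbatim, replacing the renormalisation $e^{-t}G(1/X_t\wedge\rho)$ by $e^{t}G(1/X_t)$ and invoking Lemma~\ref{lemmasub} in place of Lemma~\ref{lemma}. All the structural ingredients are already in place: the subcritical $G$ is slowly varying at $+\infty$ with inverse $G^{-1}(z)=v_{\log z}(\lambda_0)$, Theorem~\ref{main2} yields the one-dimensional convergence, and the flow is built from the Poisson construction~(\ref{poissoninfinitevariation}) since $\Psi$ is of infinite variation.

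For existence of the $Z_i$ and the Poisson structure of $\mathcal{M}$, I would fix sequences $s_l\downarrow 0$ and $\epsilon_k\downarrow 0$, set $I_{l,k}:=\{i\in I:X^i_{s_l}>\epsilon_k\}$, and re-enumerate $\{(x_i,X^i_{s_l+\cdot}),\,i\in I_{l,k}\}$ as a Poisson process on $\mathbb{R}_+$ with i.i.d.\,CSBP-valued marks of initial law $\ell_{s_l}(\ddr x;x>\epsilon_k)/\ell_{s_l}((\epsilon_k,+\infty))$. The identity $e^{t}G(1/X^i_t)=e^{s_l}\cdot e^{t-s_l}G(1/X^i_{s_l+(t-s_l)})$ combined with Lemma~\ref{lemmasub} produces, for each $i\in I_{l,k}$, an almost-sure limit $Z_i\overset{d}{=}e^{s_l}\bar Z(X^i_{s_l})$; intersecting over $l,k$ exhibits a single almost-sure event carrying all limits $(Z_i)_{i\in I}$. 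The intensity is identified via
\begin{align*}
N_\Psi(Z>z;X_{s_l}>\epsilon_k)\underset{k\to\infty}{\longrightarrow}\int_{(0,+\infty]}\ell_{s_l}(\ddr x)\bigl(1-e^{-xG^{-1}(ze^{-s_l})}\bigr)=v_{s_l}\bigl(G^{-1}(ze^{-s_l})\bigr),
\end{align*}
which by the semigroup relation $v_s\circ v_t=v_{s+t}$ and the formula $G^{-1}(w)=v_{\log w}(\lambda_0)$ equals $v_{\log z}(\lambda_0)=G^{-1}(z)$, giving $\bar\mu(z)=G^{-1}(z)$ exactly.

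Next, for the sup representation $Z(x,y)=\sup_{x<x_i\leq y}Z_i$, I would reduce to $x=0$ by the flow property and argue as in Lemma~\ref{Zsup}: monotonicity of $G(1/\cdot)$ gives $\tilde Z(y)\geq\sup_{x_i\leq y}Z_i$ for the limit $\tilde Z(y)$ provided by Theorem~\ref{main2}; step one supplies the matching one-dimensional law, so equality holds almost surely for each fixed rational $y$; the conclusion is promoted to every $y\geq 0$ by sandwiching between rationals and invoking the c\`adl\`ag version of the extremal process $(Z(y),y\geq 0)$ from Proposition~4.7-ii) in \cite{Res87}.

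Finally, the left-limit identification would be obtained by rerunning the coupling of Lemma~\ref{left limit convergence 1}. Since $(X_{s_l+t}(U_n-))_{t\geq 0}$ is a CSBP$(\Psi)$ started from $X_{s_l}(U_n-)$, Lemma~\ref{lemmasub} yields a limit $\hat Z_n$, which satisfies $\hat Z_n\geq Z(U_n-)$ automatically by monotonicity. For the equality in law, I would introduce an independent Poisson point process $\mathcal N^Y=\sum_{j\in J}\delta_{(y_j,Y^j)}$ with intensity $\ddr y\otimes N_\Psi(\ddr Y)$, define $\bar X^i:=\sum_{X_{s_l}(x_i-)<y_j\leq X_{s_l}(x_i)}Y^j$, note that $\sum_{i}1_{\{X^i_{s_l}>0\}}\delta_{(x_i,X^i_{s_l+\cdot})}\overset{d}{=}\sum_i 1_{\{\bar X^i_0>0\}}\delta_{(x_i,\bar X^i_\cdot)}$, and apply the already-proved sup formula to the flow $(Y_t(y),t\geq 0,y\geq 0)$ to conclude $\hat Z_n\overset{d}{=}Z(U_n-)$. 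The main obstacle is precisely this coupling together with the bookkeeping over the $(I_{l,k})$; all other manipulations are structurally identical to the supercritical case.
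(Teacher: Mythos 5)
Your proposal follows essentially the same route as the paper, which in fact states explicitly (just before Lemma~\ref{lemmasub}) that Lemma~\ref{thethreeinfinitevarsub} is obtained by applying \emph{verbatim} the proofs of Lemmas~\ref{thethreeinfinitevar}, \ref{Zsup} and \ref{left limit convergence 1} with $e^{-t}G(1/X_t\wedge\rho)$ replaced by $e^{t}G(1/X_t)$ and Lemma~\ref{lemma} by Lemma~\ref{lemmasub}. Your sign bookkeeping is correct ($Z^{l,k}_n\overset{d}{=}e^{s_l}\bar Z(V_0^{(n),l,k})$, yielding the threshold $ze^{-s_l}$ in the intensity computation), and your observation that $v_{s_l}\bigl(G^{-1}(ze^{-s_l})\bigr)=v_{s_l}\circ v_{\log z - s_l}(\lambda_0)=v_{\log z}(\lambda_0)=G^{-1}(z)$ holds exactly for each $l$ is a small simplification over the paper's double-limit presentation, but not a structurally different argument.
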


\begin{lemma}\label{superresistant} $\mathcal{S}=\{x>0; \Delta Z(x)>0\}$ a.s.  
\end{lemma}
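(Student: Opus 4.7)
The plan is to mimic the proof of Lemma \ref{superprolific} from the supercritical case, substituting the slow variation of $G$ at $+\infty$ (established in the lemma preceding the proof of Theorem \ref{main2}) for the slow variation at $0$ used there, and replacing $X^{i}_t\to+\infty$ by $X^{i}_t\to 0$ where appropriate. The structure is: (a) prove a ratio lemma for individual progenies, (b) use it to show that jumps of $Z$ are super-individuals, (c) use it again to show that non-jumps are not.

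First I would establish the key ratio lemma: if $x_j,x_m\in\{x_i,i\in I\}$ with $Z_j>Z_m$, then $X^{j}_t/X^{m}_t\to+\infty$ a.s. By Lemma \ref{thethreeinfinitevarsub}, the ratio $G(1/X^{m}_t)/G(1/X^{j}_t)$ tends to $Z_m/Z_j\in[0,1)$. Writing $u_t:=1/X^{j}_t\to+\infty$ and $r_t:=X^{j}_t/X^{m}_t$, this ratio equals $G(r_t u_t)/G(u_t)$; since $G$ is non-increasing and the ratio is eventually $<1$, we must have $r_t\geq 1$ for large $t$. If $\liminf r_t=\theta\in[1,+\infty)$, taking a subsequence $t_n$ with $r_{t_n}\to\theta$ and invoking the locally uniform convergence for slowly varying functions (Proposition 0.5 in \cite{Res87}) applied to $G$ at $+\infty$ gives $G(r_{t_n}u_{t_n})/G(u_{t_n})\to 1$, contradicting $Z_m/Z_j<1$. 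Hence $r_t\to+\infty$.

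Next, for $x_m$ a jump of $Z$ (i.e. $Z_m>Z(x_m-)$), Lemma \ref{thethreeinfinitevarsub} gives $e^{t}G(1/X_t(x_m-))\to Z(x_m-)$ and $e^{t}G(1/X^{m}_t)\to Z_m$, so $G(1/X_t(x_m-))/G(1/X^{m}_t)\to Z(x_m-)/Z_m<1$. Running the slow variation argument of Step 1 with $X^{j}_t$ replaced by $X_t(x_m-)$ and $X^{m}_t$ unchanged yields $X^{m}_t/X_t(x_m-)\to+\infty$, i.e.\ $x_m\in\mathcal{S}$. Conversely, if $x\notin\{x_i,i\in I\}$ then $\Delta X_t(x)=0$ for all $t$ by the Poisson construction, so $x\notin\mathcal{S}$ trivially. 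If $x=x_m$ with $\Delta Z(x_m)=0$, then $Z_m\leq Z(x_m-)$. Since $\bar\mu(z)=G^{-1}(z)$ is continuous, $\mu$ is atomless, so conditioning on the remaining Poisson atoms (which determine $Z(x_m-)$) the value $Z_m$ has a continuous conditional law; hence $\mathbb{P}(Z_m=Z(x_m-))=0$. A countable union over $m$ gives a.s.\ $Z_m<Z(x_m-)$, and in particular some $x_j<x_m$ satisfies $Z_j>Z_m$. Step 1 then yields $X^{j}_t/X^{m}_t\to+\infty$, and since $X_t(x_m-)\geq X^{j}_t$ one obtains $X^{m}_t/X_t(x_m-)\to 0$, so $x_m\notin\mathcal{S}$.

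The main obstacle I anticipate is Step 1, where the contradiction argument must carefully exploit the locally uniform convergence for slowly varying functions on bounded-away-from-zero intervals; care is needed because $r_t$ is only known to be bounded below by $1$, not above, so ruling out $\liminf r_t<+\infty$ rather than $\limsup$ is essential. A secondary subtlety is the atomlessness step in the third part, made delicate by the fact that in the subcritical infinite variation regime $\mu$ has infinite total mass and the atoms $\{x_i\}$ are dense in $\mathbb{R}_+$; however, the continuity of $G^{-1}$ together with the independence inherent in the Poisson structure makes this a clean countable-union argument.
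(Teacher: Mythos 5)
Your proof is correct and follows essentially the same route as the paper, which establishes the ratio lemma (your Step 1) by slow variation of $G$ at $+\infty$ and then defers the rest to the argument of Lemma \ref{superprolific}; your explicit treatment of the tie event $\{Z_m=Z(x_m-)\}$ via atomlessness of $\mu$ is a welcome detail that the paper leaves implicit. One small slip in Step 2: to deduce $X^{m}_t/X_t(x_m-)\to+\infty$ from $G(1/X_t(x_m-))/G(1/X^{m}_t)\to Z(x_m-)/Z_m<1$, the correct substitution into Step 1 is $X^{j}_t\to X^{m}_t$ and $X^{m}_t\to X_t(x_m-)$ (not "$X^{j}_t$ replaced by $X_t(x_m-)$ and $X^{m}_t$ unchanged"), though the conclusion you state is the right one.
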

\begin{proof} 
We only show that if $Z_j>Z_m$ then $\frac{X^{j}_t}{X^{m}_t}\underset{t\rightarrow +\infty}\longrightarrow +\infty$ a.s. The identity for $\mathcal{S}$ can be proven as in Lemma \ref{superprolific} by using the slow variation at $+\infty$. Recall $G(z)=\exp\left(-\int_{\lambda_0}^{z}\frac{\ddr u}{\Psi(u)}\right)$, by Lemma \ref{thethreeinfinitevarsub},
$$\frac{e^{t}G(1/X^m_t)}{e^{t}G(1/X^j_t)}=\exp\left(-\int_{\frac{1}{X_t^j}}^{\frac{1}{X_t^m}}\frac{\ddr u}{\Psi(u)}\right)\underset{t\rightarrow +\infty}\longrightarrow\frac{Z_m}{Z_j}<1.$$
Then, $\underset{t\rightarrow +\infty}{\lim}\int_{\frac{1}{X_t^j}}^{\frac{1}{X_t^m}}\frac{\ddr u}{\Psi(u)}\in (0,+\infty)$ a.s. Since $\Psi$ is non-negative then for $t$ large enough $\frac{1}{X_t^{m}}\geq \frac{1}{X_t^{j}}$. This ensures that $\underset{t\rightarrow +\infty}{\liminf} \frac{X^j_t}{X^m_t}>0$ a.s. Assume $\underset{t\rightarrow +\infty}{\liminf} \frac{X^j_t}{X^m_t}=:\theta\in (0,+\infty)$ a.s., then there exists $(t_n)$ such that $t_n\underset{n\rightarrow +\infty}\longrightarrow +\infty$ and $\frac{X^j_{t_n}}{X^m_{t_n}}\underset{n\rightarrow +\infty}{\longrightarrow} \theta$ a.s. By locally uniform convergence of slowly varying function, (see Proposition 0.5 in \cite{Res87}) we have that 
\beqnn
\frac{G(\frac{1}{X^m_{t_n}})}{G(\frac{1}{X^j_{t_n}})}=\frac{G(\frac{1}{X^j_{t_n}}\frac{X^j_{t_n}}{X^m_{t_n}})}{G(\frac{1}{X^j_{t_n}})}\underset{n\rightarrow +\infty }{\longrightarrow} 1 \ a.s.
\eeqnn
This entails a contradiction. In conclusion, if $Z_j>Z_m$, then $\frac{X^{j}_t}{X^{m}_t}\underset{t\rightarrow +\infty}\longrightarrow +\infty$ a.s. \end{proof}
\subsection{Flow of Neveu's continuous-state branching processes}\label{subsectionNeveu}
It is well-known that a supercritical CSBP$(\Psi)$ with $\rho<\infty$ conditioned to be extinct is a subcritical CSBP with mechanism $u \mapsto\Psi(u+\rho)$. When the CSBP is supercritical non-explosive persistent with infinite mean and infinite variation, one can combine Theorems \ref{main11} and \ref{main2} to obtain both the rates of growth and of decay. In the Neveu case, we can renormalize the population size on the events of extinction and non-extinction by the same function. As claimed in the introduction, the renormalized flow of Neveu CSBPs converges towards an extremal-$\Lambda$ process with $\Lambda(z)=e^{-e^{-z}}$ for $z\in \mathbb{R}$.
\begin{lemma}\label{Gumbel} Let $(X_t(x), t\geq 0)$ be a CSBP$(\Psi)$ with $\Psi(u)=u\log u$. Then 
$e^{-t}\log X_t(x)\underset{t\rightarrow +\infty}{\longrightarrow} Z(x)\text{ a.s.}$
where the process $(Z(x),x\geq 0)$ is an extremal-$\Lambda$ process.
\end{lemma}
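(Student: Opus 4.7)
The plan is to specialise the martingale approach of Theorem \ref{main11} to Neveu's explicit cumulant. By Example \ref{example1}, $v_t(\lambda)=\lambda^{e^{-t}}$, so $v_{-t}(\lambda)=\lambda^{e^t}$ for every $\lambda>0$ and $t\in\R$. Since Neveu's mechanism is persistent in the sense of Theorem \ref{Grey}-ii), one has $\bar v=+\infty$, and Lemma \ref{martingale} together with Lemma \ref{WW} apply to every $\lambda>0$: the process $(\exp(-\lambda^{e^t}X_t(x)))_{t\geq 0}$ is a positive martingale whose almost-sure limit $W^{\lambda}(x)$ takes only the values $0$ and $+\infty$, with $\P(W^{\lambda}(x)=0)=e^{-x\lambda}$.

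Mimicking the sandwich argument in the proof of Theorem \ref{main11}, the strict monotonicity of $\lambda\mapsto\lambda^{e^t}$ yields a single almost-sure event on which $q\mapsto W^{q}(x)$ is non-decreasing over all rationals, so the threshold $\Lambda_x:=\inf\{q\in\Q_+\colon W^{q}(x)=+\infty\}$ is a well-defined random variable with $\P(\Lambda_x\leq \lambda)=1-e^{-x\lambda}$ on $(0,+\infty)$. In particular $\Lambda_x$ has no atoms, and $Z(x):=-\log\Lambda_x$ satisfies $\P(Z(x)\leq z)=\exp(-xe^{-z})=\Lambda(z)^x$ for every $z\in\R$. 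Writing $e^{-t}\log X_t(x)=e^{-t}\log(\lambda^{e^t}X_t(x))-\log\lambda$, on $\{W^{\lambda}(x)=0\}$ the argument of the first logarithm tends to $0$, hence the bracket is eventually $\leq 0$, giving $\limsup_{t\to\infty} e^{-t}\log X_t(x)\leq -\log\lambda$ for every rational $\lambda<\Lambda_x$; symmetrically, on $\{W^{\lambda}(x)=+\infty\}$ one obtains $\liminf_{t\to\infty} e^{-t}\log X_t(x)\geq -\log\lambda$ for every rational $\lambda>\Lambda_x$. Pinching $\lambda$ to $\Lambda_x$ through rationals from both sides then yields the almost-sure convergence $e^{-t}\log X_t(x)\to Z(x)$.

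It remains to identify $(Z(x),x\geq 0)$ as an extremal-$\Lambda$ process, which I would do by checking (\ref{maxid}). For $x,y\geq 0$, decompose $X_t(x+y)=X_t(x)+(X_t(x+y)-X_t(x))$ into two independent CSBPs$(\Psi)$ and apply the elementary inequality $0\leq\log(a+b)-\log(a\vee b)\leq\log 2$ valid for all $a,b>0$; multiplication by $e^{-t}$ washes out this $O(1)$ error as $t\to+\infty$, so passing to the limit gives $Z(x+y)=Z(x)\vee Z'(y)$ almost surely with $Z'(y)$ independent of $Z(x)$ and distributed like $Z(y)$. Combined with the marginal law of $Z(x)$ computed above, this is precisely (\ref{maxid}) and identifies $(Z(x),x\geq 0)$ as the required extremal-$\Lambda$ process. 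The delicate point is the dichotomy $W^{\lambda}(x)\in\{0,+\infty\}$ provided by Lemma \ref{WW}: it is exactly what enables the sandwich bounds on $e^{-t}\log X_t(x)$ to hold uniformly regardless of whether $X_t(x)\to 0$ or $X_t(x)\to+\infty$, and thereby gives a single almost-sure statement covering both the extinction and survival events.
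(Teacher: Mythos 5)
Your proposal is correct, but on the convergence part it takes a genuinely different route from the paper. The paper does not reprove the almost-sure convergence of $e^{-t}\log X_t(x)$ for fixed $x$: it cites Proposition 10 of Fleischmann and Sturm (going back to Neveu) for convergence towards a Gumbel variable, and devotes its proof entirely to the extremal identification, arguing that $Z(x+y)$ and $Z(x)\vee Z'(y)$ have the same law while monotonicity of the logarithm gives the one-sided almost-sure inequality $Z(x+y)\geq Z(x)\vee Z(x,x+y)$, whence almost-sure equality and (\ref{maxid}). You instead make the convergence self-contained by running the Grey-martingale/threshold (sandwich) argument of Theorem \ref{main11} with the explicit Neveu cumulant $v_{-t}(\lambda)=\lambda^{e^{t}}$; this is precisely the combination of the supercritical and subcritical renormalisations alluded to at the start of Subsection \ref{subsectionNeveu}, and it buys a proof that does not outsource the Gumbel limit and makes transparent why a single normalisation captures both the growth rate on survival and the decay rate on extinction. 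For the identification step, your two-sided bound $\log(a\vee b)\leq\log(a+b)\leq\log 2+\log(a\vee b)$ yields $Z(x+y)=Z(x)\vee Z(x,x+y)$ almost surely in one stroke, slightly more economically than the paper's equality-in-law argument; both verify (\ref{maxid}).

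One point should be stated more carefully: you invoke Lemma \ref{WW} ``for every $\lambda>0$''. Its statement indeed allows $\lambda\in(0,\bar v)=(0,+\infty)$ here, but the proof written in the paper uses $v_{-t}(\lambda)\to 0$, which for a supercritical mechanism holds only for $\lambda<\rho=1$; for $\lambda\geq 1$ one has $v_{-t}(\lambda)=\lambda^{e^{t}}\to+\infty$, and your sandwich genuinely needs the dichotomy on that range too (otherwise $\Lambda_x$ could sit at $\rho=1$ with probability $e^{-x}$, i.e.\ on the extinction event, and the decay rate would be lost). The conclusion $\P(W^{\lambda}(x)=0)=1-\P(W^{\lambda}(x)=+\infty)=e^{-x\lambda}$ does hold for all $\lambda>0$ in the Neveu case, and with the cumulant you already wrote it is a one-line check: $\mathbb{E}\bigl[e^{-\theta\lambda^{e^{t}}X_t(x)}\bigr]=\exp\bigl(-x\,\theta^{e^{-t}}\lambda\bigr)\to e^{-x\lambda}$ for every fixed $\theta>0$, so the limit does not depend on $\theta$ and the martingale limit is $0$ or $+\infty$ with the stated probabilities (alternatively, use the slow variation of $G$ at $+\infty$ as in Theorem \ref{main2}). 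With that line added, $\Lambda_x$ is exponential with parameter $x$ on all of $(0,+\infty)$ and your argument goes through as described.
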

\begin{proof} 
We refer the reader to Proposition 10 and its proof in Fleischmann and Sturm \cite{MR2086012} for the almost-sure convergence for fixed $x$, towards a random variable with a Gumbel law. We verify now that the process $(Z(x), x\geq 0)$ is extremal. Clearly $Z(x+y)$ has the same law as $Z(x)\vee Z'(y)$ for a random variable $Z'(y)$ distributed as $Z(y)$ and independent of $Z(x)$.  Moreover, since $$e^{-t}\log(X_t(x+y))\geq e^{-t}\log (X_t(x)) \vee e^{-t}\log (X_t(x+y)-X_t(x))$$ then $Z(x+y)\geq Z(x)\vee Z(x,x+y)$ a.s. with $Z(x,x+y):=\underset{t\rightarrow \infty}{\lim} e^{-t}\log (X_t(x+y)-X_t(x))$. We deduce that $Z(x+y)=Z(x)\vee Z(x,x+y)$ a.s. and conclude by recalling (\ref{maxid}).
\end{proof}

If $x$ is such that $Z(x)<0$, then the population started from $x$ is extinguishing. If $x$ is such that $Z(x)>0$, the population is not extinguishing. The process $(Z(x),x\geq 0)$ enters in $(0,+\infty)$ with the first prolific individual. 

\begin{Prop}\label{corneveu} Consider $(X_t(x), t\geq 0, x\geq 0)$ a flow of Neveu CSBPs (constructed as in (\ref{poissoninfinitevariation})).  Then almost-surely for all $i\in I$, the limit $Z_i:=\underset{t\rightarrow +\infty}{\lim}e^{-t}\log X^{i}_t$ exists. The point process  $\mathcal{M}:=\sum_{i\in I}\delta_{(x_i, Z_i)}$ is a Poisson point process over $\mathbb{R}_+\times\mathbb{R}$ with intensity $\ddr x\otimes e^{-z}\ddr z$ and
almost-surely, for all $x\geq 0$, \[e^{-t}\log X_t(x)\underset{t\rightarrow +\infty}{\longrightarrow} Z(x):=\sup_{x_i\leq x}Z_i.\] Moreover $\mathcal{S}=\{x>0; \Delta Z(x)>0\}$ a.s.
\end{Prop}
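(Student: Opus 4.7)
The plan is to follow essentially the same three-step scheme used in the supercritical infinite-mean case (Lemmas \ref{thethreeinfinitevar}, \ref{Zsup}, \ref{superprolific}), using Lemma \ref{Gumbel} in place of Theorem \ref{main11}. The Neveu mechanism is special in that it is simultaneously supercritical with infinite mean and persistent with infinite variation, so the one-dimensional renormalisation $e^{-t}\log X_t(x)$ converges to a Gumbel variable taking values in all of $\mathbb{R}$, with the sign recording whether the individual's progeny is prolific or extinguishing. No truncation at $\rho=1$ is needed because $X_t(x)>0$ for all $t$ almost surely and the map $\log$ is globally monotone.

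First, I would establish the existence of $Z_i$ for every $i\in I$ on a single almost sure event. Fix a decreasing sequence $(s_l,\epsilon_k)$ as in the proof of Lemma \ref{thethreeinfinitevar}, and consider the finite-per-compact index set $I_{l,k}=\{i\in I:X^i_{s_l}>\epsilon_k\}$. Conditionally on $\mathcal{F}_{s_l}$, the processes $(X^i_{s_l+t})_{t\geq 0}$, $i\in I_{l,k}$, are independent Neveu CSBPs started at $X^i_{s_l}>0$, so Lemma \ref{Gumbel} applied to each yields $e^{-t}\log X^i_t=e^{-s_l}e^{-(t-s_l)}\log X^i_{s_l+(t-s_l)}\to Z_i$ a.s. A countable intersection produces an event $\Omega_0$ of full probability on which $Z_i$ exists for every $i\in I$. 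The intensity of $\mathcal{M}=\sum_i\delta_{(x_i,Z_i)}$ is then computed from the cluster measure exactly as in Lemma \ref{thethreeinfinitevar}: using Neveu's explicit cumulant $v_s(\lambda)=\lambda^{e^{-s}}$,
\[
N_\Psi(Z>z;\,X_s>0)=\int_{(0,\infty]}\ell_s(\ddr y)(1-e^{-ye^{-z}})=v_s(e^{-z})=e^{-ze^{-s}}\xrightarrow[s\to 0]{}e^{-z},
\]
which identifies $\bar\mu(z)=e^{-z}$ on all of $\mathbb{R}$.

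For the third claim, I would argue as in Lemma \ref{Zsup}. Since $X_t(x)\geq X^i_t$ for every $x_i\leq x$, monotonicity of $\log$ gives $\liminf_t e^{-t}\log X_t(x)\geq\sup_{x_i\leq x}Z_i$ on $\Omega_0$. On the other hand, Lemma \ref{Gumbel} provides the existence of $Z(x):=\lim_t e^{-t}\log X_t(x)$ with Gumbel law of parameter $x$; since the Poisson calculation above shows that $\sup_{x_i\leq x}Z_i$ has the identical Gumbel law, equality $Z(x)=\sup_{x_i\leq x}Z_i$ holds almost surely for every fixed $x$. To lift this into a simultaneous statement, work at rational $x$ and extend to arbitrary $x\geq 0$ using monotonicity in $x$ of both sides together with right-continuity of the extremal process; the countable set of jump points $x=x_i$ is treated as in Lemma \ref{left limit convergence 1} via a coupling that identifies the left limit $\lim_t e^{-t}\log X_t(x_i-)$ with $Z(x_i-)=\sup_{x_j<x_i}Z_j$.

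Finally, for $\mathcal{S}=\{x>0:\Delta Z(x)>0\}$ I would reproduce Lemma \ref{superprolific} but replace the slow variation argument by a direct one specific to Neveu: if $Z_j>Z_m$, then $e^{-t}\log(X^j_t/X^m_t)=e^{-t}\log X^j_t-e^{-t}\log X^m_t\to Z_j-Z_m>0$, hence $X^j_t/X^m_t\to+\infty$ a.s. If $x_m$ is not a record of $\mathcal{M}$ there exists $x_j<x_m$ with $Z_j>Z_m$, giving $X_t(x_m-)/X^m_t\to+\infty$ and so $x_m\notin\mathcal{S}$; conversely, at a record one has $Z_m>Z(x_m-)=\lim_t e^{-t}\log X_t(x_m-)$ by the previous paragraph, and the same computation shows $X^m_t/X_t(x_m-)\to+\infty$, i.e.\;$x_m\in\mathcal{S}$. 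The main obstacle is the usual one: upgrading the fixed-$x$ identity $Z(x)=\sup_{x_i\leq x}Z_i$ to one that is valid almost surely uniformly in $x$, and in particular at each countable record point; everything else is a direct transcription of the machinery already developed in Section \ref{supercritical}.
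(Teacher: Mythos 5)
Your proposal is correct and follows essentially the same route as the paper: Lemma \ref{Gumbel} for the one-dimensional limits, the $I_{l,k}$ decomposition and cluster-measure computation of Lemma \ref{thethreeinfinitevar} to obtain the Poisson intensity $e^{-z}\,\ddr z$, and the machinery of Lemma \ref{Zsup} and Lemma \ref{left limit convergence 1} for the simultaneous identification $Z(x)=\sup_{x_i\le x}Z_i$ and the left limits. The only (harmless) deviations are your direct additive argument $e^{-t}\log(X^j_t/X^m_t)\to Z_j-Z_m$ in place of the slow-variation step of Lemma \ref{superprolific}, which is indeed simpler in the Neveu case, and a small bookkeeping slip in the intensity computation (because of the time shift the Gumbel tail should be evaluated at $e^{s}z$, giving exactly $e^{-z}$ for every $s$), which in any case disappears in the limit $s\to 0$ just as in the paper.
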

\begin{proof}  We stress that Proposition \ref{corneveu} is not a direct combination of Theorem \ref{main1}-i) and Theorem \ref{main1}-ii) for $\Psi(u)=u\log u$. Indeed the point process obtained in Theorem \ref{main1}-i) does not take into account the decay of non-prolific individuals (they are all with $Z_i=0$).  With the same notation as in Lemma \ref{thethreeinfinitevar}, for all $i\in I_{l,k}$, by Lemma \ref{Gumbel}, $Z_i$ exists a.s. and has the same law as $e^{-s_l}\bar{Z}_{V^{l,k}_0}$, with $\bar{Z}_x$ a random variable with a Gumbel Law and $V^{l,k}_0$ a random variable with law $\frac{\ell_{s_{l}}(\ddr x; x\geq \epsilon_k)}{\ell_{s_{l}}([\epsilon_k,+\infty))}$. Since the number of individuals in $I_{l,k}$ is a Poisson random variable with parameter  $\ell_{s_{l}}([\epsilon_k,+\infty))$, one has
\begin{align*}
N_{\Psi}(&X_{s_l}>\epsilon_k; Z>z)=\ell_{s_l}((\epsilon_k,+\infty))\mathbb{P}(\bar{Z}_{V^{l,k}_0}>e^{s_l}z)\\
&=\int_{(\epsilon_k,\infty]}\ell_{s}(\ddr x)\left(1-\mathbb{P}(\bar{Z}_x<e^{s_l}z)\right)=\int_{(\epsilon_k,+\infty]}\ell_{s_l}(\ddr x)\left(1-e^{-xe^{-ze^{s_l}}}\right)\\
&\underset{k \rightarrow +\infty}{\longrightarrow}  \int_{(0,+\infty]}\ell_{s_l}(\ddr x)\left(1-e^{-xe^{-ze^{s_l}}}\right)=v_{s_l}(e^{-ze^{s_l}})=e^{-ze^{2s_l}}\underset{l \rightarrow +\infty}{\longrightarrow} e^{-z}.
\end{align*}
Thus, the intensity of the Poisson point process $\mathcal{M}$ is $\mu(\ddr z)=e^{-z}\ddr z$. The rest of the proof follows exactly the same lines as Lemma \ref{thethreeinfinitevar} and Lemma \ref{Zsup}.\end{proof}

\section*{Acknowledgements} We would like to thank the referees for their careful reading and insightful suggestions. This work is partially supported by the visiting scholar program at Chern Institute of Mathematics (CIM), by the French National Research Agency (ANR): ANR GRAAL (ANR-14-CE25-0014) as well as by LABEX MME-DII (ANR11-LBX-0023-01), and by the NSFC of China (11671216). C.F thanks T. Duquesne, C. Labb\'e, V. Rivero and Z. Li for helpful discussions. He would like to  thank J-F. Le Gall for providing Neveu's work \cite{Neveu}. C.M would like to thank Professors A. Lambert and K. Xiang for their encouragements.


\end{document}